\documentclass[a4paper,reqno,11pt]{amsart}

\usepackage[utf8]{inputenc}
\usepackage[T1]{fontenc}
\usepackage[english]{babel}
\usepackage{lmodern}
\usepackage[margin=1.0in]{geometry}
\usepackage{microtype}

\usepackage{graphicx}
\usepackage{amsmath,amssymb,amsthm}
\usepackage{mathtools}
\usepackage{bm}
\usepackage{xcolor}
\usepackage{enumitem}
\usepackage{booktabs}

\usepackage[hidelinks]{hyperref}

\allowdisplaybreaks
\numberwithin{equation}{section}

\theoremstyle{plain}

\newtheorem{thm}{Theorem}
\newtheorem{lemma}{Lemma}

\newtheorem{proposition}{Proposition}

\theoremstyle{definition}

\newtheorem{rem}{Remark}

\theoremstyle{remark}

\newcommand{\R}{{\mathbb{R}}}

\title[Drift parameter estimation for interacting particle systems with singular repulsion]{Drift parameter estimation for interacting particle systems with singular repulsion}

\author{Chiara Amorino}

\thanks{C. Amorino gratefully acknowledges financial support from
PID2022-138268NB-I00/AEI/10.13039/501100011033.}

\address{Department of Economics and Business,
Universitat Pompeu Fabra, and Barcelona School of Economics,
Ramon Trias Fargas 25-27,
08005 Barcelona, Spain}

\email{chiara.amorino@upf.edu}

\author{Jacek Ma{\l}ecki}

\thanks{Corresponding author: Jacek Ma{\l}ecki.
E-mail:
\href{mailto:jacek.malecki@pwr.edu.pl}
{jacek.malecki@pwr.edu.pl}.}

\address{Department of Mathematics,
Faculty of Pure and Applied Mathematics,
Wroc{\l}aw University of Science and Technology,
ul. Wybrze{\.z}e Wyspia{\'n}skiego 27,
50-370 Wroc{\l}aw, Poland}

\email{jacek.malecki@pwr.edu.pl}

\author{Radomyra Shevchenko}

\address{Centrale M\'editerran\'ee and
Laboratoire Jean-Alexandre Dieudonn\'e,
UMR CNRS 7351,
Universit\'e C\^ote d'Azur,
Parc Valrose,
06108 Nice Cedex 2, France}

\email{radomyra.shevchenko@univ-cotedazur.fr}

\subjclass[2020]{Primary 62M05;
Secondary 62F12, 60H10, 60K35, 60B20}

\keywords{Interacting particle systems,
singular stochastic differential equations,
parameter estimation,
continuous observations,
discrete observations,
mean-field limits,
empirical measures,
Dyson Brownian motion,
Wishart-Laguerre processes}

\begin{document}

\begin{abstract}
We study the estimation of the interaction parameter in singular interacting particle systems from continuous and discrete observations over a fixed time horizon. The singularity of the interaction drift prevents the direct application of standard likelihood and contrast-based methods, which rely on regularity assumptions that fail in this setting. Our approach exploits an exact reduction of the particle dynamics to a one-dimensional stochastic differential equation for a quadratic empirical moment. By the symmetry of the interaction kernel, the singular terms cancel, yielding an explicit estimator in which the unknown parameter enters linearly. We establish consistency and asymptotic normality as the number of particles, \(N\), tends to infinity. Letting \(\varepsilon_N\) denote the microscopic noise scaling of the system, we obtain asymptotic normality at the rate \(\sqrt{N/\varepsilon_N}\) for strictly positive interaction kernels, under an additional condition on the limiting empirical information. We further extend these results to degenerate interaction kernels, achieving consistency and asymptotic normality at rate \(N\) under suitable structural assumptions and the canonical scaling \(\varepsilon_N=N^{-1}\). In both regimes, we provide conditions on the observation mesh ensuring that the statistical limits remain valid under discrete sampling, in high frequency. Finally, we illustrate the applicability of our methodology through Dyson Brownian motion, the Dyson Ornstein--Uhlenbeck process, and Wishart--Laguerre particle systems, alongside a numerical investigation of the estimators' finite-sample performance.
\end{abstract}

\maketitle

\section{Introduction}
\label{s:intro}

Interacting diffusions with singular repulsion arise in several important models of random matrix theory and statistical physics. A leading example is Dyson's Brownian motion, which describes the evolution of eigenvalues of a random matrix and provides a dynamical counterpart to random matrix ensembles \cite{Dys62}. Related models include non-colliding Brownian particles, squared Bessel and Wishart--Laguerre processes, and radial Dunkl processes; see, among others, \cite{CepLep97,GraMal14,Mal26,MalPer22}. These systems describe populations of particles whose mutual repulsion becomes singular as their positions approach one another. In particular, the interaction parameter controls the strength of the repulsion and is therefore a natural quantity to estimate from observations of the particle trajectories.

Despite the extensive probabilistic literature on such models, their statistical analysis remains considerably less developed. For singular particle systems, existing statistical results have mainly focused on long-time asymptotics, in which the number of particles \(N\) is fixed and the observation horizon \(T\) tends to infinity. In this setting, ergodicity provides the mechanism underlying consistency and asymptotic normality of estimators; see \cite{AlKe12,AlKe13,DMR,MaNie, MRD} for representative results and extensions. A complementary line of research focuses on parameter estimation for regular interacting diffusions in the mean-field regime, where the observation horizon \(T\) is fixed and the number of particles tends to infinity. Starting with the seminal work of Kasonga \cite{kasonga1990maximum}, this literature has developed a variety of likelihood-based and contrast-based procedures for continuous and discrete observations, considering both finite interacting particle systems and their limiting McKean--Vlasov dynamics; see, among others, \cite{AmoHeiPilPod23,della2023lan,genon2024parametric,heidari2026local,sharrock2023online,sharrock2026efficient}. The scope of these results has since broadened considerably, with extensions to kinetic interacting particle systems \cite{amorino2026kinetic,iguchi2025parameter,iguchi2025}, systems driven by fractional Brownian motion \cite{amorino2025fractional}, and systems with jump noise \cite{duarte2026lan}. Taken together, these contributions show that observing an increasing number of particles can provide sufficient statistical information for consistent parameter estimation, even over a fixed time horizon.

A fundamental distinction between these approaches and the problem considered here lies in the regularity of the coefficients. The existing fixed-horizon methods for regular interacting diffusions rely on Lipschitz-type assumptions, together with suitable growth and non-degeneracy conditions. Such assumptions are central to the statistical arguments: they permit the control of particle increments, the approximation of the likelihood or contrast, and the derivation of the corresponding asymptotic expansions. In the presence of singular repulsion, however, the drift of each particle $X^{i,N}$ contains terms of the form
\[
\frac{H(X_t^{i,N},X_t^{j,N})}{X_t^{i,N}-X_t^{j,N}}
\]
with a symmetric function $H$, which are singular at collisions. The standard estimation procedures cannot therefore be transferred directly to the present setting: the regularity properties on which their analysis depends are no longer available. The singularity must be taken into account at the level of the statistical construction itself, rather than treated merely as an additional technical difficulty in the proof.

The aim of this paper is to develop a statistical methodology for parameter estimation in singular interacting particle systems that circumvents the regularity assumptions typically required by standard likelihood-based and contrast-based methods. The key idea is to exploit an exact reduction of the particle dynamics to a one-dimensional stochastic differential equation governing a quadratic empirical moment. This reduction removes the singularity from the estimating equation and leads to an explicit estimator of the unknown interaction parameter based on observations of the particle system. We establish the statistical properties of the proposed estimator under both continuous and discrete observation schemes over a fixed time horizon, in the asymptotic regime as the number of particles \(N\) tends to infinity.

\subsection{Motivation and main idea}

Although the statistical quantities considered below are symmetric and therefore do not depend on the labeling of the particles, the natural formulation of the singular system is in terms of ordered coordinates. Define the closed ordered Weyl chamber of type \(A_{N-1}\)
\begin{equation*}
    \overline{W}_N
    :=
    \left\{x=(x_1,\ldots,x_N)\in\mathbb{R}^N:
    x_1\leq\cdots\leq x_N\right\},
\end{equation*}
together with its collision-free part
\begin{equation*}
    W_N
    :=
    \left\{x=(x_1,\ldots,x_N)\in\mathbb{R}^N:
    x_1<\cdots<x_N\right\}.
\end{equation*}

Our basic motivating example is the scaled Dyson Brownian motion on \(\overline{W}_N\), given by
\begin{equation}
    \label{eq:intro-dyson}
    dX_t^{i,N}
    =
    \sqrt{\varepsilon_N}\,dB_t^i
    +
    \frac{\beta}{2N}
    \sum_{\substack{j=1\\j\neq i}}^N
    \frac{1}{X_t^{i,N}-X_t^{j,N}}\,dt,
    \qquad i=1,\ldots,N,
\end{equation}
where \((B^1,\ldots,B^N)\) is a collection of independent standard Brownian motions. Here, \(\beta>0\) is the unknown interaction parameter, whereas \((\varepsilon_N)_{N\geq1}\) is a known deterministic sequence describing the microscopic noise variance. Depending on the relative size of \(\varepsilon_N\) and the repulsion, the process may remain in \(W_N\) or may hit the collision boundary of the closed chamber. In the latter case, we work with a non-sticky chamber-valued solution: collisions may occur, but the process spends zero Lebesgue time on the collision set and the singular drift terms are integrable. The precise solution concept and the coefficient-level existence assumptions are discussed in detail in Section~\ref{sec:assumptions-main-results}.

Consider the quadratic empirical moment
\begin{equation}
    \label{eq:Y:defn}
    Y_N(t)
    :=
    \frac{1}{N}\sum_{i=1}^N\bigl(X_t^{i,N}\bigr)^2.
\end{equation}
At every time at which the particle coordinates are distinct, the identity
\begin{equation*}
    \sum_{i=1}^N
    \sum_{\substack{j=1\\j\neq i}}^N
    \frac{X_t^{i,N}}{X_t^{i,N}-X_t^{j,N}}
    =
    \frac{N(N-1)}{2}
\end{equation*}
holds by pairing summands at $(i,j)$ with those at $(j,i)$. In the non-sticky setting, this identity holds for Lebesgue-almost every time, which is sufficient for the integrated stochastic differential equation. Applying Itô's formula to \eqref{eq:Y:defn}, we obtain
\begin{equation}
    \label{eq:intro-dyson-Y-SDE}
    dY_N(t)
    =
    2\sqrt{\frac{\varepsilon_N}{N}}\sqrt{Y_N(t)}\,dW_t
    +
    \left(
    \varepsilon_N
    +
    \frac{\beta}{2}\left(1-\frac{1}{N}\right)
    \right)dt,
\end{equation}
where \(W\) is a one-dimensional Brownian motion, possibly after enlarging the filtered probability space (see \cite{MalPer22} for a detailed calculation in a more general case). Thus, although the original system has a singular \(N\)-dimensional drift, its quadratic empirical moment satisfies a regular one-dimensional stochastic differential equation. In particular, the singularity disappears exactly, rather than through an approximation.

The effective noise level in \eqref{eq:intro-dyson-Y-SDE} is
\begin{equation}
    \label{eq:intro-delta-r}
    \delta_N
    :=
    \sqrt{\frac{\varepsilon_N}{N}},
    \qquad
    r_N
    :=
    \delta_N^{-1}
    =
    \sqrt{\frac{N}{\varepsilon_N}}.
\end{equation}
The quantity \(r_N\) is the natural statistical rate throughout the paper. In particular, for the polynomial family \(\varepsilon_N=N^{-\alpha}\), \(\alpha\in[0,1]\), we obtain
\[
    r_N=N^{(1+\alpha)/2}.
\]
Hence the order-one microscopic noise regime \(\alpha=0\) leads to the rate \(\sqrt{N}\), whereas the normalization \(\alpha=1\) leads to the rate \(N\). The same formulation therefore covers all intermediate rates. In the degenerate interaction regime, where the repulsion coefficient may vanish at some collision points, we consider the scaling \(\varepsilon_N=N^{-1}\).

Equation~\eqref{eq:intro-dyson-Y-SDE} belongs to the small-noise diffusion framework studied, for example, in \cite{GloSor}. There is, however, an important distinction between the different scalings. When \(\varepsilon_N\equiv1\), the individual particles are not small-noise diffusions: the small noise in \eqref{eq:intro-dyson-Y-SDE} appears only after averaging over the \(N\) particles. When \(\varepsilon_N=N^{-1}\), the individual-particle noise is already of order \(N^{-1/2}\), and empirical averaging produces a further reduction in the stochastic fluctuations. The rate \eqref{eq:intro-delta-r} accommodates both mechanisms in the non-degenerate interaction regime.

Motivated by this exact reduction, we now consider the more general ordered particle system given by
\begin{equation}
    \label{eq:intro-general-system}
    dX_t^{i,N}
    =
    \sqrt{\varepsilon_N}\,
    \sigma\bigl(X_t^{i,N}\bigr)\,dB_t^i
    +
    b\bigl(X_t^{i,N}\bigr)\,dt
    +
    \frac{\beta}{N}
    \sum_{\substack{j=1\\j\neq i}}^N
    \frac{H\bigl(X_t^{i,N},X_t^{j,N}\bigr)}
    {X_t^{i,N}-X_t^{j,N}}\,dt,
    \qquad i=1,\ldots,N.
\end{equation}
Here \(b,\sigma:\mathbb{R}\to\mathbb{R}\) are known functions, and \(H:\mathbb{R}^2\to[0,\infty)\) is a known symmetric interaction kernel satisfying \(H(x,y)=H(y,x)\). The unknown parameter is again \(\beta>0\). The process
\[
X_t^N
=
\bigl(X_t^{1,N},\ldots,X_t^{N,N}\bigr)
\]
is assumed to take values in \(\overline{W}_N\). Existence, pathwise uniqueness, non-collision, and boundary non-sticking for systems of the form \eqref{eq:intro-general-system} have been studied under general assumptions on the coefficients; the precise hypotheses ensuring that the particle systems considered here are well-defined are collected in Section~\ref{sec:assumptions-main-results}. The Dyson system \eqref{eq:intro-dyson} is recovered from \eqref{eq:intro-general-system} by taking \(\sigma\equiv1\), \(b\equiv0\), and \(H(x,y)=1/2\) for every \(x,y\in\mathbb{R}\). Another important example, discussed in detail later, is the Wishart--Laguerre system, corresponding to \(\sigma(x)=\sqrt{|x|}\), \(b(x)=c>0\), and \(H(x,y)=|x|+|y|\).

As in the Dyson example, we define \(Y_N\) by \eqref{eq:Y:defn} and introduce the observable empirical processes
\begin{align*}
    A_N(t)
    &:=
    \frac{1}{N^2}
    \sum_{1\leq i<j\leq N}
    H\bigl(X_t^{i,N},X_t^{j,N}\bigr),\\
    B_N(t)
    &:=
    \frac{1}{N}
    \sum_{i=1}^N
    \left(
    2X_t^{i,N}b\bigl(X_t^{i,N}\bigr)
    +
    \varepsilon_N\sigma^2\bigl(X_t^{i,N}\bigr)
    \right),\\
    C_N(t)
    &:=
    \frac{1}{N}
    \sum_{i=1}^N
    \left(
    X_t^{i,N}\sigma\bigl(X_t^{i,N}\bigr)
    \right)^2.
\end{align*}
For every time at which the particle coordinates are distinct, the symmetry of \(H\) gives
\begin{equation*}
    \sum_{i=1}^N X_t^{i,N}
    \sum_{\substack{j=1\\j\neq i}}^N
    \frac{H\bigl(X_t^{i,N},X_t^{j,N}\bigr)}
    {X_t^{i,N}-X_t^{j,N}}
    =
    \sum_{1\leq i<j\leq N}
    H\bigl(X_t^{i,N},X_t^{j,N}\bigr).
\end{equation*}
In the non-sticky collision regime, this identity holds for Lebesgue-almost every time. It therefore yields, possibly after enlarging the filtered space, the exact semimartingale equation
\begin{equation}
    \label{eq:intro-Y-one-dimensional}
    dY_N(t)
    =
    2\sqrt{\frac{\varepsilon_N}{N}}\sqrt{C_N(t)}\,dW_t
    +
    \bigl(B_N(t)+2\beta A_N(t)\bigr)\,dt,
\end{equation}
where \(W\) is again a one-dimensional Brownian motion (see \cite{MalPer22}).

Equation~\eqref{eq:intro-Y-one-dimensional} is the key to our statistical procedure. It yields an explicit estimating equation in which the unknown parameter enters linearly, while the stochastic fluctuations are governed by the observable empirical process \(C_N\). This structure allows us to construct an estimator without requiring the regularity of the original singular drift that underlies conventional estimation procedures.

Our first set of results concerns systems with strictly positive interaction kernels. We establish consistency of the proposed estimator over a broad range of microscopic noise scalings and prove asymptotic normality at the rate \(\sqrt{N/\varepsilon_N}\) under an additional common-information condition on the reachable mean-field limits. These results hold for both continuous observations and discrete observations of the full particle configuration over the fixed time horizon $[0, T]$. In the discrete case, a vanishing observation mesh $\Delta_n$ suffices for consistency, while a stronger condition on the mesh ensures that the discretization error is negligible at the statistical rate.

We then extend the analysis to degenerate interactions, allowing the interaction kernel to vanish. Under additional structural conditions ensuring non-collision and with the canonical scaling \(\varepsilon_N=N^{-1}\), we prove consistency and asymptotic normality at the rate \(N\). We also establish the corresponding results for discrete observations, with the condition \(N\Delta_{n}\to0\) ensuring that the asymptotic Gaussian limit is preserved. Across both interaction regimes, consistency does not require all reachable mean-field limits to yield the same deterministic information, whereas the stated asymptotic normality results rely on this additional condition.

Finally, we illustrate the applicability of our theoretical results through three classical models: Dyson Brownian motion, the Dyson Ornstein--Uhlenbeck process, and Wishart--Laguerre particle systems. These examples cover different forms of singular repulsion, including both strictly positive and degenerate interaction kernels, and allow us to assess the finite-sample performance of the proposed estimators numerically.

\subsection{Outline}

The paper is organized as follows. Section~\ref{sec:assumptions-main-results} introduces the model assumptions and presents the main results on the consistency and asymptotic normality of the proposed estimators under continuous and discrete observations. Section~\ref{sec: numerical} illustrates the applicability of the general theory to Dyson Brownian motion, the Dyson Ornstein--Uhlenbeck process, and Wishart--Laguerre particle systems, and provides numerical illustrations of the finite-sample performance of the estimators. Section~\ref{s:continuous} is devoted to the proofs of the results for continuous observations. Finally, Section~\ref{s:discrete} establishes the corresponding results for discrete observations and develops the estimates needed to control the discretization error.

\section{Assumptions and main results}
\label{sec:assumptions-main-results}

\subsection{Assumptions}
We now collect the assumptions used in the statistical results. Throughout, the unknown interaction parameter satisfies $\beta>0$. Conditions~\ref{ass:epsilon}-\ref{ass:bounds} are the basic assumptions on the scaling and coefficients, while Condition~\ref{ass:finite-sigma-zero-set} is used to guarantee non-degeneracy of the observable diffusion energy. Conditions~\ref{ass:strong-regularity} and \ref{ass:noncollision-structure} are imposed only in the degenerate interaction regime, where we take $\varepsilon_N=N^{-1}$ and require the particle system to remain non-colliding.

\begin{enumerate}[label=(A\arabic*),ref=(A\arabic*)]

    \item \label{ass:epsilon}
    The sequence $(\varepsilon_N)_{N\geq2}$ is deterministic and satisfies
        \begin{equation*}
            0<\varepsilon_N\leq1, \qquad \varepsilon_N\longrightarrow\varepsilon\in[0,1].
        \end{equation*}

    \item \label{ass:continuity}
    The functions
        \begin{equation*}
            b,\sigma:\mathbb R\longrightarrow\mathbb R
        \end{equation*}
    are continuous. The interaction kernel
        \begin{equation*}
            H:\mathbb R^2\longrightarrow[0,\infty)
        \end{equation*}
    is continuous and symmetric.

    \item \label{ass:bounds}
    There exists a constant $K>0$ such that
        \begin{align*}
            \sigma^2(x)+|b(x)|^2 &\leq K(1+x^2), \qquad x\in\mathbb R, \\
            H(x,y) &\leq K(1+|x|)(1+|y|), \qquad x,y\in\mathbb R.
        \end{align*}
        
    \item \label{ass:strong-regularity}
    There exists an increasing function $\varrho:[0,\infty)\longrightarrow[0,\infty)$ such that
        \begin{equation*}
            \int_{0+}\frac{du}{\varrho(u)}=\infty
        \end{equation*}
    and
        \begin{equation*}
            |\sigma(x)-\sigma(y)|^2 \leq \varrho(|x-y|), \qquad x,y\in\mathbb R.
        \end{equation*}
    Moreover, $b$ is Lipschitz continuous. In particular, the condition on $\sigma$ is satisfied when $\sigma$ is globally $1/2$-H\"older continuous.

    \item \label{ass:noncollision-structure}
    The interaction kernel satisfies the following structural conditions:
        \begin{itemize}
            \item[(i)] for every $w<x<y<z$ we have
                \begin{equation*}
                    \frac{H(w,z)}{z-w} \leq \frac{H(x,y)}{y-x};
                \end{equation*}
            \item[(ii)] for every $x,y\in \R$ we have
                \begin{equation*}
                    \sigma^2(x)+\sigma^2(y) \leq 4H(x,y);
                \end{equation*}
            \item[(iii)] for every $x<y<z$ we have
                \begin{equation*}
                    H(x,y)(y-x)+H(y,z)(z-y) \leq H(x,z)(z-x).
                \end{equation*}
        \end{itemize}

    \item \label{ass:finite-sigma-zero-set}
    The diffusion coefficient $\sigma:\R\to\R$ has only finitely many zeros. We write
        \begin{equation*}
            M_\sigma := \#\left\{x\in\mathbb R:\sigma(x)=0\right\} <\infty.
        \end{equation*}
\end{enumerate}

Conditions~\ref{ass:strong-regularity} and \ref{ass:noncollision-structure} are not imposed in the principal strictly positive interaction regime. They will only be used in the degenerate case, where the microscopic noise is fixed at order $N^{-1/2}$ and the particles are required to remain non-colliding.

\medskip

For every $N\geq2$, let $X_0^N$ be an $\overline W_N$-valued random vector. We denote the empirical measure process by
    \begin{equation*}
        \mu_t^N := \frac1N\sum_{i=1}^N\delta_{X_t^{i,N}}, \qquad 0\leq t\leq T.
    \end{equation*}
For a probability measure $\nu$ with finite fourth moment, define
    \begin{equation*}
        \mathcal A(\nu) := \frac12 \int_{\mathbb R^2} H(x,y)\,\nu(dx)\nu(dy), \qquad \mathcal C(\nu) := \int_{\mathbb R}x^2\sigma^2(x)\,\nu(dx).
    \end{equation*}
For a measure-valued path $\nu=(\nu_t)_{t\in[0,T]}$, set
    \begin{equation*}
        \mathcal J(\nu) := \int_0^T\frac{\mathcal A(\nu_t)^2}{\mathcal C(\nu_t)} \,dt,
    \end{equation*}
whenever the right-hand side is well-defined and finite. The finite-particle quantities satisfy
    \begin{equation*}
        A_N(t) = \mathcal A(\mu_t^N) - \frac{1}{2N} \int_{\mathbb R} H(x,x)\,\mu_t^N(dx), \qquad 
        C_N(t) = \mathcal C(\mu_t^N).
    \end{equation*}
In the sequel, we will use the following assumptions related to the initial distribution of the considered particle system. 
\begin{enumerate}[label=(I\arabic*),ref=(I\arabic*)]
    
    \item \label{ass:initial-conv}
    There exists a deterministic probability measure $\mu_0\in\mathcal P(\mathbb R)$ such that
        \begin{equation*}
            \mu_0^N \xrightarrow[N\to\infty]{\mathbb P} \mu_0
        \end{equation*}
    in the topology of weak convergence.
    
    \item \label{ass:initial:moments}
    We have
    \begin{equation*}
        \sup_{N\geq2} \mathbb E\left[\int_{\mathbb R}|x|^8\,\mu_0^N(dx) \right] <\infty.
    \end{equation*}
    
    \item \label{ass:initial:non-zero}
    For every $N\geq2$ under consideration,
    \begin{equation*}
        \mathbb P\left(
            C_N(0)>0
        \right)=1.
    \end{equation*}
    Equivalently, with probability one, at least one coordinate satisfies
    \begin{equation*}
        X_0^{i,N}\sigma(X_0^{i,N})\neq0.
    \end{equation*}

    \item \label{ass:initial-limit-non-zero}
    The limiting initial diffusion energy is strictly positive $\mathcal C(\mu_0)>0$.

\end{enumerate}
Conditions~\ref{ass:initial-conv} and \ref{ass:initial:moments} are used in the mean-field compactness argument. Condition~\ref{ass:initial:non-zero} guarantees that the finite-particle estimator is well-defined from time zero, whereas Condition~\ref{ass:initial-limit-non-zero} provides the corresponding non-degeneracy at the level of the limiting empirical measure.

\medskip

For the discrete-observation results, define
    \begin{equation*}
        \phi_{B,N}(x) := 2xb(x)+\varepsilon_N\sigma^2(x), \qquad
        \phi_C(x) := x^2\sigma^2(x).
    \end{equation*}
We shall use the following additional assumption.

\begin{enumerate}[label=(D\arabic*),ref=(D\arabic*)]
    \item
    \label{ass:discrete-regularity}
    There exists a deterministic closed interval $E\subseteq\mathbb R$ with non-empty interior such that, for every particle system under consideration,
    \begin{equation*}
        \mathbb P\left(X_t^{i,N}\in E
          \text{ for all }i=1,\ldots,N\text{ and }t\in[0,T]\right)=1.
    \end{equation*}
    The restrictions of $H$ to $E^2$, and of $\phi_C$ and $\phi_{B,N}$ to $E$, admit $C^2$ extensions to open neighbourhoods of their respective domains. The extension of $H$ can be chosen symmetric. There exists a constant $K>0$, independent of $N$, such that, for $k=0,1,2$ and $x\in E$,
    \begin{align*}
        |\phi_{B,N}^{(k)}(x)|&\leq K(1+|x|^{2-k}),\\
        |\phi_C^{(k)}(x)|&\leq K(1+|x|^{4-k}).
    \end{align*}
    Moreover, for every multi-index $\gamma$ with
    $1\leq|\gamma|\leq2$,
    \begin{equation*}
        |\partial^\gamma H(x,y)|\leq K(1+x^2+y^2),
        \qquad x,y\in E.
    \end{equation*}
    Derivatives at endpoints are understood through the extensions.
\end{enumerate}
The choice $E=\mathbb R$ gives the global regularity condition. For a proper invariant interval, smoothness is required only on the part of the state space visited by the particles. The extensions used in It\^o's formula need not be non-negative outside $E^2$ and do not replace the coefficients of the original particle system.

\medskip

Before stating the main results, we recall the mean-field compactness statement that will be used throughout the paper. It follows from \cite[Theorem~8]{Mal26}, together with the general noise-scaling observation following equation~(2.25) therein. Assume Conditions~\ref{ass:epsilon}-\ref{ass:bounds} and Initial Conditions~\ref{ass:initial-conv}-\ref{ass:initial:moments}. Suppose, in addition, that for every $N$ the finite-particle system admits a continuous $\overline W_N$-valued solution on $[0,T]$ and that all singular drift terms are integrable on this interval. Then the sequence $(\mu^N)_{N\geq2}$ is tight in $C\bigl([0,T],\mathcal P(\mathbb R)\bigr)$. Moreover, the proof gives, for $p=2,4,8$,
    \begin{equation*}
        \sup_{N\geq2} \sup_{0\leq t\leq T} \mathbb E\left[\int_{\mathbb R}|x|^p\,\mu_t^N(dx)\right] <\infty.
    \end{equation*}
Every reachable subsequential limit $\mu$ satisfies, for every $f\in C_b^2(\mathbb R)$ and every $t\in[0,T]$,
    \begin{align}\label{eq:limit}
        \left\langle\mu_t,f\right\rangle &= \left\langle\mu_0,f\right\rangle + \int_0^t \left\langle\mu_s,bf'\right\rangle ds + \frac{\varepsilon}{2} \int_0^t \left\langle\mu_s,\sigma^2f''\right\rangle ds \nonumber\\
        &\quad+ \frac{\beta}{2} \int_0^t \int_{\mathbb R^2} \frac{f'(x)-f'(y)}{x-y} H(x,y)\, \mu_s(dx)\mu_s(dy) \,ds,
    \end{align}
almost surely, where the quotient in the last integral is understood as $f''(x)$ on the diagonal $x=y$, and the product $ \langle \,,\, \rangle$ denotes the integration of the second component with respect to the measure.  For a given family of particle systems, let $\mathfrak L$ denote the collection of all measure-valued processes whose laws arise as weak subsequential limits of the corresponding empirical measure processes in $C\bigl([0,T],\mathcal P(\mathbb R)\bigr)$.

\subsection{Results for strictly positive interactions}
\label{subsec:positive-results}

We first consider the principal regime in which $H$ is strictly positive. Under additional Conditions~\ref{ass:continuity} and \ref{ass:bounds}, the strictly positive existence result of \cite{Mal26} provides, for every $N$, a global weak non-sticky solution taking values in $\overline W_N$. The same result excludes positive-time collisions of three or more particles at one spatial point. By Condition~\ref{ass:finite-sigma-zero-set}, the function $x\mapsto x\sigma(x)$ has at most $M_\sigma+1$ distinct zeros. Since at most two particles can occupy any fixed point at a positive time, the condition
    \begin{equation*}
        N\geq2M_\sigma+3
    \end{equation*} 
implies $C_N(t)>0$, $t>0$. Initial Condition~\ref{ass:initial:non-zero} provides the same property at $t=0$.

\medskip

For continuous observations, set
    \begin{equation*}
        J_N := \int_0^T \frac{A_N(t)^2}{C_N(t)} \,dt
    \end{equation*}
and define
    \begin{equation*}
        \widehat{\beta}_N^{\,c} := \frac{1}{2J_N} \displaystyle\int_0^T\frac{A_N(t)}{C_N(t)}\left(dY_N(t)-B_N(t)\,dt\right).
    \end{equation*}
The following theorem gives the main result for continuous observations in the strictly positive interaction regime. It shows that the estimator $\widehat{\beta}_N^{\,c}$ is well-defined and consistent throughout the full range of noise scalings covered by Condition~\ref{ass:epsilon}. If all reachable mean-field limits have the same deterministic information, the estimator is asymptotically Gaussian at the natural rate $\sqrt{N/\varepsilon_N}$.

\begin{thm}[Continuous observations]
    \label{thm:intro-continuous}
    Assume Conditions~\ref{ass:epsilon}-\ref{ass:bounds} and \ref{ass:finite-sigma-zero-set}, and Initial Conditions~\ref{ass:initial-conv}-\ref{ass:initial-limit-non-zero}. Suppose that the repulsion coefficient is strictly positive, i.e. 
        \begin{equation}
            \label{eq:strictly-positive-H}
            H(x,y)>0\/,\qquad x,y\in\R\/.
        \end{equation}
    For every $N\geq2M_\sigma+3$, let $X^N$ be a global weak $\overline W_N$-valued solution.

    Then $\widehat{\beta}_N^{\,c}$ is well-defined almost surely for every $N\geq2M_\sigma+3$, and
        \begin{equation*}
            \widehat{\beta}_N^{\,c} \xrightarrow[N\to\infty]{\mathbb P} \beta.
        \end{equation*}
    Suppose, in addition, that there exists a deterministic constant $J_\infty\in(0,\infty)$ such that every $\mu\in\mathfrak L$ satisfies
        \begin{equation}
            \label{eq:positive-common-information}
            \mathcal J(\mu)=J_\infty \qquad\text{almost surely}.
        \end{equation}
    Then
        \begin{equation}
            \label{eq:intro-continuous-clt}
            \sqrt{\frac{N}{\varepsilon_N}} \left(\widehat{\beta}_N^{\,c}-\beta\right) \xrightarrow[N\to\infty]{\mathcal L} \mathcal N\left(0,J_\infty^{-1} \right).
        \end{equation}
\end{thm}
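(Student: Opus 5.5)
Substituting the exact semimartingale equation \eqref{eq:intro-Y-one-dimensional} into the definition of $\widehat\beta_N^{\,c}$ gives the error decomposition
\begin{equation*}
    \widehat\beta_N^{\,c}-\beta
    = \frac{\delta_N}{J_N}\int_0^T \frac{A_N(t)}{\sqrt{C_N(t)}}\,dW_t,
    \qquad \delta_N=\sqrt{\varepsilon_N/N}.
\end{equation*}
So $r_N(\widehat\beta_N^{\,c}-\beta)=M_N(T)/J_N$, where $M_N(t):=\int_0^t A_N C_N^{-1/2}\,dW$ is a continuous local martingale with quadratic variation $\langle M_N\rangle_T=J_N$. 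The plan is therefore to analyse $J_N$ and then apply the martingale CLT together with Slutsky's lemma.

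\emph{Well-definedness.} For $N\ge 2M_\sigma+3$, the remark preceding the theorem together with \ref{ass:initial:non-zero} gives $C_N(t)>0$ for all $t\in[0,T]$ almost surely. Since $C_N$ is continuous in $t$, it is bounded below on $[0,T]$ by a random positive constant. Combined with the moment bounds, this makes $J_N$ finite and the stochastic integrals well-defined. We also have $J_N>0$, because $H>0$ forces $A_N(t)>0$.

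\emph{Behaviour of $J_N$.} The functional $\nu\mapsto \mathcal A(\nu)^2/\mathcal C(\nu)$ is not weakly continuous, since $H$ and $x^2\sigma^2$ are unbounded. We handle this by truncation, using the uniform moment bounds for $p=4,8$ to control the tails; this gives uniform integrability of $H$ under $\mu^N\otimes\mu^N$ and of $x^2\sigma^2$ under $\mu^N$. The correction $\frac1{2N}\int H(x,x)\,\mu_t^N(dx)$ is $O_{\mathbb P}(N^{-1})$ by the second-moment bound. Along any subsequence with $\mu^N\to\mu$ in law in $C([0,T],\mathcal P(\R))$, we realise the convergence almost surely by Skorokhod and obtain $A_N(t)\to\mathcal A(\mu_t)$ and $C_N(t)\to\mathcal C(\mu_t)$ in probability, uniformly in $t$.

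\emph{Main obstacle: $\inf_t\mathcal C(\mu_t)>0$ for every $\mu\in\mathfrak L$.} This is the step I expect to be hardest, because a lower bound on $\mathcal C(\mu_t)$ requires excluding a collapse of the mass of $\mu_t$ onto the finite zero set $\{x:x\sigma(x)=0\}$. I would first try to argue from the limit equation \eqref{eq:limit} that $t\mapsto\mathcal C(\mu_t)$ is continuous. I would then show that a limit cannot be concentrated on the finitely many zeros at a positive time: for $H>0$ the term $\frac\beta2\iint\frac{f'(x)-f'(y)}{x-y}H\,d\mu\,d\mu$ with $f''>0$ strictly spreads any atom. Concretely, testing \eqref{eq:limit} with a smooth convex bump around a zero shows that its second moment increases at rate proportional to $\beta\min H\cdot\mu_t(\{z\})^2$, which is incompatible with a growing atom. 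Together with \ref{ass:initial-limit-non-zero} at $t=0$, this gives $\mathcal C(\mu_t)>0$ on $[0,T]$, hence a positive infimum by continuity. Once this holds, $J_N\to\mathcal J(\mu)\in(0,\infty)$ in law along the subsequence, jointly with $\mu^N$.

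\emph{Consistency.} For each $K>0$, Lenglart's inequality gives
\begin{equation*}
    \mathbb P\bigl(|M_N(T)|>\eta\sqrt{J_N}\,\cdot\,\tfrac{\sqrt{J_N}}{\delta_N}\bigr)
    \le \mathbb P(J_N<1/K)+\frac{C\,\delta_N^2}{\eta^2}K,
\end{equation*}
or more simply $M_N(T)/J_N=O_{\mathbb P}(J_N^{-1/2})$. Since $J_N$ is tight away from $0$ (every subsequential limit is positive), we get $\widehat\beta_N^{\,c}-\beta=\delta_N\,O_{\mathbb P}(1)\to0$, because $\delta_N\le N^{-1/2}$. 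Note that this step does not require the common-information condition.

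\emph{Central limit theorem.} Under \eqref{eq:positive-common-information}, every subsequential limit of $J_N$ equals the constant $J_\infty$, so $J_N\to J_\infty$ in probability. The martingale CLT for the continuous local martingale $M_N$, or equivalently a Dambis--Dubins--Schwarz time change, whose hypothesis is exactly convergence of the bracket to a deterministic limit, gives $M_N(T)\Rightarrow\mathcal N(0,J_\infty)$. Slutsky's lemma then yields \eqref{eq:intro-continuous-clt}.
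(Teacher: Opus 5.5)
Your overall architecture is the paper's. It runs through the exact representation $\widehat\beta_N^{\,c}-\beta=\delta_N M_N(T)/J_N$ with $\langle M_N\rangle_T=J_N$, then truncation plus Skorokhod for uniform-in-time convergence of $(A_N,C_N)$ to $(\mathcal A(\mu_\cdot),\mathcal C(\mu_\cdot))$, then two-sided tightness of $J_N$, and finally Dambis--Dubins--Schwarz and Slutsky. Your Lenglart-type consistency bound is a harmless variant of the paper's exponential-supermartingale bound.

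You also identify the right mechanism for the one non-routine step, $\inf_t\mathcal C(\mu_t)>0$: the diagonal interaction term $\frac{\beta}{2}p_z^2H(z,z)f''(z)$. However, the argument as written does not close. A lower bound like $\beta H(z,z)\mu_t(\{z\})^2$ on the drift of a local second moment fails at a general time, because the $bf'$ term and the interaction between mass near $z$ and mass elsewhere have no sign. Where it does hold, it is vacuous when $\mu_t$ has no atom at $z$. A collapse onto $Z_\sigma=\{x:x\sigma(x)=0\}$ at time $\tau$ can happen through atomless $\mu_t$, $t<\tau$. So ``incompatible with a growing atom'' rules nothing out. What is missing is a way to transfer positivity of the drift at the collapsed measure $\mu_\tau$ to the times just before $\tau$.

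The paper does this with one test function $q\in C_b^2$ satisfying $q\geq0$, $q^{-1}(\{0\})=Z_\sigma$, $q'=0$ and $q''>0$ on $Z_\sigma$, and $q\equiv1$ near infinity. The drift $\Phi(\nu)$ of $\langle\nu,q\rangle$ is then weakly continuous: all integrands, including $H\mathcal D_q$, are bounded and continuous, since the factor $1/(x-y)$ offsets the growth of $H$. At any $\nu=\sum_jp_j\delta_{z_j}$ supported on $Z_\sigma$, the $bq'$ term and all cross terms vanish. This leaves $\Phi(\nu)\geq\frac{\beta}{2}\sum_jp_j^2H(z_j,z_j)q''(z_j)\geq c_0>0$, using $\sum_jp_j^2\geq1/\#Z_\sigma$. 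A compactness argument gives $\Phi\geq c_0/2$ on $\{\langle\nu,q\rangle\leq\eta\}$. A first-crossing argument for $u(t)=\langle\mu_t,q\rangle$ then yields $\inf_tu(t)\geq\min\{u(0),\eta\}>0$.

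Your single-bump version also works under four conditions. The bump must be nonnegative and vanish together with its derivative at every point of $Z_\sigma$. It must be centred at a zero carrying mass at least $1/\#Z_\sigma$. You must use continuity of $t\mapsto\Phi(\mu_t)$ at $\tau>0$ to contradict $u\geq0=u(\tau)$.

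Separately, continuity of $t\mapsto\mathcal C(\mu_t)$, needed to upgrade pointwise positivity to a positive infimum, cannot come from the limit equation. When $\sigma$ is merely continuous, $x^2\sigma^2(x)$ is not an admissible test function. It follows instead from truncation and $\sup_t\int|x|^8\,\mu_t(dx)<\infty$ almost surely. That bound, like your uniform-in-time convergence of $A_N$ and $C_N$, needs the moment estimate with the supremum inside the expectation, not merely $\sup_t\mathbb E$. The paper proves it via Burkholder--Davis--Gundy.
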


\bigskip

For discrete observations, let
    \begin{equation*}
        t_m=m\Delta_n, \qquad m=0,\ldots,n, \qquad \Delta_n=\frac{T}{n}.
    \end{equation*}
Define the discrete information by
    \begin{equation}
        \label{eq:discrete-information}
        J_{N,n}^{\,d} := \Delta_n \sum_{m=0}^{n-1} \frac{A_N(t_m)^2}{C_N(t_m)}
    \end{equation}
and the discrete-observation estimator by
    \begin{equation}
        \label{eq:discrete-estimator}
        \widehat{\beta}_{N,n}^{\,d} := \frac{1}{2J_{N,n}^{\,d}}\sum_{m=0}^{n-1}\frac{A_N(t_m)}{C_N(t_m)}\left[Y_N(t_{m+1})-Y_N(t_m)-B_N(t_m)\Delta_n\right]\/.
    \end{equation}
This estimator is the Euler approximation of the continuous-observation estimator associated with \eqref{eq:intro-Y-one-dimensional}. The following theorem shows that a vanishing observation mesh is sufficient for consistency. Asymptotic normality is preserved when the discretization error is negligible at the statistical rate $\sqrt{N/\varepsilon_N}$.

\begin{thm}[Discrete observations]
    \label{thm:intro-discrete}
    Assume the hypotheses of Theorem~\ref{thm:intro-continuous} preceding the additional condition \eqref{eq:positive-common-information}, and suppose that Condition~\ref{ass:discrete-regularity} holds.

    Let $(n_N)_{N\geq2M_\sigma+3}$ be a deterministic sequence such that
        \begin{equation*}
            n_N\longrightarrow\infty, \qquad \Delta_{n_N} = \frac{T}{n_N} \longrightarrow0.
        \end{equation*}
    Then $\widehat{\beta}_{N,n_N}^{\,d}$ is well-defined almost surely for every $N\geq2M_\sigma+3$, and
        \begin{equation}
            \label{eq:intro-discrete-consistency}
            \widehat{\beta}_{N,n_N}^{\,d} \xrightarrow[N\to\infty]{\mathbb P} \beta.
        \end{equation}
    If \eqref{eq:positive-common-information} also holds and
        \begin{equation}
            \label{eq:intro-discrete-mesh-condition}
            \Delta_{n_N} \sqrt{\frac{N}{\varepsilon_N}} \longrightarrow0,
        \end{equation}
    then
        \begin{equation}
            \label{eq:intro-discrete-clt}
            \sqrt{\frac{N}{\varepsilon_N}} \left(\widehat{\beta}_{N,n_N}^{\,d}-\beta\right)\xrightarrow[N\to\infty]{\mathcal L}\mathcal N\left(0,J_\infty^{-1}\right).
        \end{equation}
\end{thm}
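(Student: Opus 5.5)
The plan is to compare the discrete estimator with the continuous one and show that the discretization error is negligible. The main work is controlling the numerator error.

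\textbf{Decomposition.} Substitute the exact increment from \eqref{eq:intro-Y-one-dimensional},
\[
Y_N(t_{m+1})-Y_N(t_m)=\int_{t_m}^{t_{m+1}}\bigl(B_N(s)+2\beta A_N(s)\bigr)ds+2\delta_N\int_{t_m}^{t_{m+1}}\sqrt{C_N(s)}\,dW_s .
\]
Write $w_m:=A_N(t_m)/C_N(t_m)$ and $\delta_N=\sqrt{\varepsilon_N/N}$. This gives
\[
\widehat\beta^{\,d}_{N,n}-\beta=\frac{1}{2J^{d}_{N,n}}\Bigl(R^{(1)}_{N}+R^{(2)}_N+2\delta_N M^d_N\Bigr),
\]
with the following three terms:
\begin{itemize}
\item the drift error $R^{(1)}_N=\sum_m w_m\int_{t_m}^{t_{m+1}}(B_N(s)-B_N(t_m))\,ds$;
\item the interaction error $R^{(2)}_N=2\beta\sum_m w_m\int_{t_m}^{t_{m+1}}(A_N(s)-A_N(t_m))\,ds$;
\item the martingale $M^d_N=\int_0^T w_{\lfloor s\rfloor}\sqrt{C_N(s)}\,dW_s$, where $\lfloor s\rfloor$ denotes the left grid point.
\end{itemize}
For the continuous estimator the analogous identity is $\widehat\beta^{\,c}_N-\beta=\delta_N M_N/J_N$ with $M_N=\int_0^T (A_N/\sqrt{C_N})\,dW$. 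I take from the proof of Theorem~\ref{thm:intro-continuous} that $J_N$ is bounded away from zero in probability (along limits, $\mathcal J(\mu)>0$) and that $\langle M_N\rangle=J_N$.

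\textbf{Regularity of the empirical functionals.} Using \ref{ass:discrete-regularity}, apply Itô's formula, via the $C^2$ extensions on $E$, to $\frac1N\sum_i\phi(X^{i,N}_t)$ for $\phi=\phi_{B,N},\phi_C$, and to $\frac{1}{N^2}\sum_{i<j}H(X^i,X^j)$. The singular drift enters only through the symmetrized quotient
\[
\frac{\phi'(x)-\phi'(y)}{x-y}\,H(x,y),
\]
which is bounded by $\sup|\phi''|\,H$, so the singularity again cancels. The same holds for $H$, using the symmetric extension and the divided difference of $\partial_1 H$. Each of $A_N,B_N,C_N$ is therefore a semimartingale whose drift is polynomially bounded and whose martingale part has quadratic variation $O(\varepsilon_N/N)$. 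The uniform moment bounds for $p=2,4,8$ then give
\[
\mathbb E|F_N(s)-F_N(t_m)|\lesssim \Delta_n+\sqrt{\varepsilon_N\Delta_n/N}
\]
for $F_N\in\{A_N,B_N,C_N\}$.

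\textbf{Controlling the error terms.} The weight $w_m$ is only controlled in probability, through a lower bound on $C_N$; this is the main obstacle.
\begin{itemize}
\item \emph{Lower bound on $C_N$.} Tightness plus \ref{ass:initial-limit-non-zero} and the continuity of limits should give that $\inf_{t\le T}\mathcal C(\mu_t)>0$ for every $\mu\in\mathfrak L$. I would try to extract this from the argument for Theorem~\ref{thm:intro-continuous}. It yields $\inf_t C_N(t)\ge c$ with probability $\ge1-\eta$, and I then localize on this event.
\item \emph{Drift errors.} On that event the bound above gives $R^{(i)}_N=O_P(\Delta_n+\delta_N\sqrt{\Delta_n})$. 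After multiplication by $\delta_N^{-1}$ this is $O_P(\delta_N^{-1}\Delta_n+\sqrt{\Delta_n})\to0$ under \eqref{eq:intro-discrete-mesh-condition}. For consistency alone, $\Delta_n\to0$ suffices, since only $o_P(1)$ is needed.
\item \emph{Information.} $J^{d}_{N,n}-J_N\to0$ in probability by the same increment bound and Lipschitz continuity of $(a,c)\mapsto a^2/c$ on $\{c\ge c_0\}$, so $J^d_{N,n}\ge c'$ with high probability. This settles well-definedness as well, since $C_N(t_m)>0$ a.s. as explained before Theorem~\ref{thm:intro-continuous}.
\item \emph{Martingale.} $\langle M^d_N-M_N\rangle=\int_0^T\bigl(w_{\lfloor s\rfloor}\sqrt{C_N(s)}-A_N(s)/\sqrt{C_N(s)}\bigr)^2ds\to0$ in probability. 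Lenglart's inequality then gives $M^d_N-M_N\to0$.
\end{itemize}

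\textbf{Conclusion.} For consistency, $\delta_N M^d_N/J^d_{N,n}=O_P(\delta_N)\to0$ together with the vanishing remainders gives \eqref{eq:intro-discrete-consistency}. For the CLT,
\[
\delta_N^{-1}\bigl(\widehat\beta^{\,d}_{N,n_N}-\widehat\beta^{\,c}_N\bigr)\to0
\]
in probability. Slutsky's lemma combined with \eqref{eq:intro-continuous-clt} then yields \eqref{eq:intro-discrete-clt}.
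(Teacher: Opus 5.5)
Your proposal is correct, and it reaches the central limit theorem by a different route than the paper.

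\textbf{How the paper proceeds.} The paper splits the error as $R_{N,n}+M_{N,n}$, exactly as you do, but then treats both pieces at the discrete level.
\begin{enumerate}
\item For the remainder, it uses the It\^o decomposition of $Z_N=B_N+2\beta A_N$ and a stochastic Fubini argument to get $U_{N,n}=O_{\mathbb P}(\Delta_n)$.
\item For the martingale, it proves $L_{N,n}(T)\Rightarrow\mathcal N(0,J_\infty)$ directly via a martingale-difference-array CLT. This requires predictable truncation of the weights at level $\Delta_n^{-1/8}$, convergence of the conditional variances, and a Lyapunov condition.
\item Consistency of the martingale term comes from an exponential-supermartingale bound using $r_N^2\langle L_{N,n}\rangle_T\to\infty$.
\end{enumerate}

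\textbf{How you proceed.} You observe that $L_{N,n}$ and the continuous score $\int_0^\cdot A_N C_N^{-1/2}\,dW$ are integrals against the same $W$. Their difference therefore has bracket $\int_0^T C_N(s)\bigl(w_{\lfloor s\rfloor}-A_N(s)/C_N(s)\bigr)^2ds$. This vanishes in probability on the localized event by the same increment bounds you use for $J^{d}_{N,n}-J_N$; on that event $|\Delta F|^2\le K|\Delta F|$, so your $L^1$ bound suffices. Lenglart's inequality then gives $L_{N,n}(T)-M_N(T)\to0$, and Slutsky transfers Theorem~\ref{thm:intro-continuous}. You also correctly see that the crude remainder bound $O_{\mathbb P}(\Delta_n+\delta_N\sqrt{\Delta_n})$ is already enough after multiplying by $r_N$, since $r_N\delta_N\sqrt{\Delta_n}=\sqrt{\Delta_n}\to0$. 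So the paper's Fubini refinement is not needed.

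\textbf{What each route buys.} Yours is shorter and avoids the triangular-array machinery. It also yields the stronger statement $r_N(\widehat\beta^{\,d}_{N,n_N}-\widehat\beta^{\,c}_N)\to0$ in probability under \eqref{eq:intro-discrete-mesh-condition}, without needing the common-information condition. The paper's route is self-contained at the level of the discretized martingale and does not pass through the Dambis--Dubins--Schwarz argument for the continuous estimator.

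\textbf{Two small points.}
\begin{enumerate}
\item The bound $\inf_t C_N(t)\ge c$ with high probability that you borrow is the paper's information-stability lemma combined with its non-degeneracy lemma. The key step there is a barrier argument using $H>0$ on the diagonal. It is not a consequence of tightness, \ref{ass:initial-limit-non-zero} and continuity alone.
\item Well-definedness for each fixed $N$ needs $J^{d}_{N,n}>0$ almost surely, not just with high probability. This follows at once from $C_N>0$ on $[0,T]$ together with $A_N>0$, which holds because $H$ is strictly positive.
\end{enumerate}
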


\subsection{Degenerate interactions under the non-colliding scaling}
\label{subsec:degenerate-results}

We now allow the interaction kernel $H$ to vanish. In this regime we assume that $\beta>1$ and restrict the microscopic noise to the canonical particle scaling $\varepsilon_N= 1/N$. Conditions~\ref{ass:strong-regularity} and \ref{ass:noncollision-structure} place the particle system within the non-colliding framework of \cite{GraMal14}. Indeed, after replacing $\sigma$ by $\sigma/\sqrt N$ and $H$ by $\beta H/N$, the diffusion repulsion condition follows from
    \begin{equation*}
        \sigma^2(x)+\sigma^2(y) \leq 4H(x,y) \leq 4\beta H(x,y),
    \end{equation*}
since $\beta>1$.

Throughout this subsection we continue to work in the type-$A_{N-1}$ chamber $W_N\subset\mathbb R^N$. To avoid additional assumptions concerning instantaneous diffraction from a degenerate collision point, the finite-particle systems are started in the open chamber $W_N$. Additional support properties that arise in particular models, such as non-negativity of the particles in the Wishart-Laguerre case, will be established separately from the corresponding coe*fficients and initial conditions.

As above, $\mathfrak L$ denotes the collection of all processes which can occur as weak subsequential limits of the empirical measure processes associated with the family of particle systems considered in the present regime.

\begin{thm}[Continuous observations with degenerate interactions]
    \label{thm:degenerate-continuous}

    Assume Conditions~\ref{ass:epsilon}-\ref{ass:finite-sigma-zero-set} and Initial Conditions~\ref{ass:initial-conv}-\ref{ass:initial:moments} and~\ref{ass:initial-limit-non-zero}. Suppose that $\beta>1$, $\varepsilon_N=1/N$, and that the function $x\mapsto H(x,x)$ has at most one zero. For every $N\geq M_\sigma+2$, consider the particle system \eqref{eq:intro-general-system} started from $X_0^N\in W_N$ almost surely.

    Then the continuous-observation estimator $\widehat{\beta}_N^{\,c}$ is well-defined almost surely and
        \begin{equation}
            \label{eq:degenerate-continuous-consistency}
            \widehat{\beta}_N^{\,c} \xrightarrow[N\to\infty]{\mathbb P} \beta.
        \end{equation}
    Suppose, in addition, that there exists a deterministic constant $J_\infty\in(0,\infty)$ such that every $\mu\in\mathfrak L$ satisfies
        \begin{equation}
            \label{eq:degenerate-common-information}
            \mathcal J(\mu)=J_\infty \qquad \text{almost surely}.
        \end{equation}
    Then
        \begin{equation}
            \label{eq:degenerate-continuous-clt}
            N\left(\widehat{\beta}_N^{\,c}-\beta\right) \xrightarrow[N\to\infty]{\mathcal L}\mathcal N\left(0,J_\infty^{-1}\right).
        \end{equation}
\end{thm}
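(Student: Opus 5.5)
The plan is to work entirely with the exact semimartingale equation \eqref{eq:intro-Y-one-dimensional}. The first task is to check that it holds \emph{for every} $t$, and not merely for Lebesgue-a.e.\ $t$.

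\textbf{Step 1: non-collision.} After rescaling $\sigma\mapsto\sigma/\sqrt N$ and $H\mapsto\beta H/N$, Conditions~\ref{ass:strong-regularity}--\ref{ass:noncollision-structure} together with $\beta>1$ place the system in the framework of \cite{GraMal14}. Since $X_0^N\in W_N$, that framework gives a unique strong solution which stays in $W_N$ for all $t\in[0,T]$. Hence the pairing identity holds pathwise, and Itô's formula yields \eqref{eq:intro-Y-one-dimensional} with $\delta_N=\sqrt{\varepsilon_N/N}=1/N$.

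\textbf{Step 2: well-definedness.} I will use the pigeonhole argument with the particles pairwise distinct. The map $x\mapsto x\sigma(x)$ has at most $M_\sigma+1$ zeros. Since $N\ge M_\sigma+2$ distinct particles cannot all sit on these zeros, $C_N(t)>0$ for all $t$. By Condition~\ref{ass:noncollision-structure}(ii), $H(x,y)=0$ forces $\sigma(x)=\sigma(y)=0$. At most $M_\sigma$ particles can lie in the zero set of $\sigma$, so at least two particles lie outside it, and the pair they form has $H>0$. Hence $A_N(t)>0$ for all $t$. By path continuity, $J_N\in(0,\infty)$ almost surely, so $\widehat\beta^{\,c}_N$ is well-defined. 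Substituting \eqref{eq:intro-Y-one-dimensional} into the definition gives the exact decomposition
\begin{equation*}
N\bigl(\widehat\beta_N^{\,c}-\beta\bigr)=\frac{M_N(T)}{J_N},\qquad M_N(t):=\int_0^t\frac{A_N(s)}{\sqrt{C_N(s)}}\,dW_s,\qquad \langle M_N\rangle_T=J_N .
\end{equation*}

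\textbf{Step 3: consistency.} By Dambis--Dubins--Schwarz, $M_N(T)=\widetilde W(J_N)$ for some Brownian motion $\widetilde W$. Therefore $\widehat\beta_N^{\,c}-\beta=N^{-1}\widetilde W(J_N)/J_N$, and consistency follows once $J_N$ is tight and $J_N^{-1}$ is tight. Both will come from the mean-field compactness statement, via a subsequence argument.

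Take a subsequence along which $\mu^N\to\mu\in\mathfrak L$ in law, and use Skorokhod representation. The uniform $8$th-moment bounds give uniform integrability of $x^2\sigma^2$ and $H$. Together with the $O(N^{-1})$ diagonal correction in $A_N$, this gives $A_N(t)\to\mathcal A(\mu_t)$ and $C_N(t)\to\mathcal C(\mu_t)$ uniformly in $t$ along the subsequence.
\begin{itemize}
\item \emph{Lower bound.} For any $\eta>0$, $J_N\ge\int_0^T A_N^2/(C_N+\eta)\,dt$, and the right-hand side is continuous in the limit. Now $\mathcal C(\mu_0)>0$ by \ref{ass:initial-limit-non-zero}, $\mathcal A(\mu_0)>0$ by (ii) and $\mathcal C(\mu_0)>0$, and $t\mapsto\mu_t$ is continuous. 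So the limit integrand is strictly positive on some interval $[0,t_0]$. This gives $\liminf J_N>0$ in probability.
\item \emph{Upper bound.} I will bound $J_N$ from above through the moment bounds, using $A_N^2\le C\,(1+Y_N)^2$ and a lower bound on $C_N$ in terms of $A_N$ coming from (ii).
\end{itemize}

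\textbf{Step 4: CLT.} Here I need the stronger statement $J_N\to J_\infty$ in probability. Once that holds, the martingale CLT (or the DDS representation with $\widetilde W(J_N)/J_N\to\widetilde W(J_\infty)/J_\infty$) gives the limit $\mathcal N(0,J_\infty^{-1})$, using \eqref{eq:degenerate-common-information} along every subsequence.

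Convergence $J_N\to\mathcal J(\mu)$ follows from the uniform convergence in Step 3, provided $\inf_{t\le T}\mathcal C(\mu_t)>0$ for every $\mu\in\mathfrak L$. I expect this to be the \textbf{main obstacle}, because with $\varepsilon=0$ there is no diffusive smoothing in \eqref{eq:limit}. The limit is then driven only by $b$ and the degenerate interaction.

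My first attempt is as follows. Suppose $\mathcal C(\mu_t)=0$ at some first time. Then $\mu_t$ is supported on the at most $M_\sigma+1$ zeros of $x\sigma(x)$. Test \eqref{eq:limit} with $f$ localized at one such atom. The interaction term contributes $\frac\beta2H(x,x)f''(x)$ times the squared mass of the atom. Because $H(x,x)$ vanishes at no more than one point, this term forces mass to spread instantaneously, contradicting the atomic support at $t$ and just before it.

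If this argument proves too delicate, the fallback is to truncate. I would replace $1/C_N$ by $1/(C_N\vee\eta)$ in the information, show that the resulting error is negligible using $A_N^2/C_N\le 4A_N\cdot A_N/\bigl(\textstyle\sum\sigma^2\text{-type terms}\bigr)$ from (ii), and then let $\eta\downarrow0$.
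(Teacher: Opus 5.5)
You have a genuine gap in the asymptotic-normality part, exactly at the step you flag as the main obstacle. You need $\inf_{t\le T}\mathcal C(\mu_t)>0$ for every $\mu\in\mathfrak L$, and your first attempt does not deliver it.

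Suppose $\mathcal C(\mu_\tau)=0$ for some $\tau\in(0,T]$. Then $\mu_\tau$ is carried by $Z_\sigma=\{x\sigma(x)=0\}$. Test \eqref{eq:limit}, with $\varepsilon=0$, against a non-negative $q\in C_b^2(\mathbb R)$ whose zero set is $Z_\sigma$ and which has $q'=0<q''$ there. The drift term and all cross terms vanish, and $u(t)=\langle\mu_t,q\rangle$ satisfies $u'(\tau)=\frac\beta2\sum_{z}\mu_\tau(\{z\})^2H(z,z)q''(z)$. This must be zero, since $u\geq0$ attains its minimum $0$ at $\tau$. So every atom sits at a zero of $x\mapsto H(x,x)$.

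The hypothesis that there is at most one such zero only reduces matters to $\mu_\tau=\delta_{z_0}$ with $H(z_0,z_0)=0$. In that case the interaction term vanishes identically, and nothing ``forces mass to spread''. The case is not vacuous: Condition~\ref{ass:noncollision-structure}(ii) gives $\sigma(z_0)=0$, so $z_0\in Z_\sigma$.

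What is missing is a separate argument that the limit cannot collapse to a point mass. This is where the Lipschitz continuity of $b$ from Condition~\ref{ass:strong-regularity} enters; you never use it at the level of the limit. The paper tests the limiting equation with $x$ and $x^2$, justified by truncation and the moment bounds. It shows that the variance $V(t)=\langle\mu_t,x^2\rangle-\langle\mu_t,x\rangle^2$ satisfies
\[
V'(t)=\iint(x-y)\bigl(b(x)-b(y)\bigr)\,\mu_t(dx)\mu_t(dy)+\beta\iint H(x,y)\,\mu_t(dx)\mu_t(dy)\geq-2LV(t).
\]
Hence $V$ cannot reach zero at a positive time once it is positive. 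If $V(0)=0$, then $\mu_0=\delta_a$ with $a\sigma(a)\neq0$, and (ii) gives $V'(0)=\beta H(a,a)\geq\frac\beta2\sigma^2(a)>0$. Therefore $V>0$ on $(0,T]$, which excludes $\delta_{z_0}$.

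Your fallback does not repair this. Condition~\ref{ass:noncollision-structure}(ii) bounds $H$ from below by $\sigma^2$, hence bounds $A_N$ from below. It gives no lower bound on $C_N=\langle\mu_t^N,x^2\sigma^2\rangle$ in terms of $A_N$. So it provides no upper control of $A_N^2/C_N$ on $\{C_N<\eta\}$, and the truncation error cannot be shown negligible without already knowing that the limit is non-degenerate. The same objection applies to the upper bound on $J_N$ you sketch in Step~3.

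That upper bound is, however, unnecessary for consistency. On $\{J_N\geq c\}$ one has $|\widetilde W(J_N)|/J_N\leq\sup_{s\geq c}|\widetilde W(s)|/s$, which is almost surely finite with an $N$-independent law. Equivalently, the exponential-supermartingale bound in Proposition~\ref{prop:continuous-martingale-criterion} only needs $N^2J_N\to\infty$. So your lower bound $J_N\geq\int_0^TA_N^2/(C_N+\eta)\,dt$, which uses non-degeneracy only at $t=0$, does give consistency. In that respect it is lighter than the paper, which proves full-horizon non-degeneracy first and feeds it into Lemma~\ref{lem:continuous-information-stability} for both conclusions. For the central limit theorem you still need $J_N\to J_\infty$, and for that your plan currently has no working argument.
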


\begin{thm}[Discrete observations with degenerate interactions]
    \label{thm:degenerate-discrete}
    Assume the hypotheses of Theorem~\ref{thm:degenerate-continuous} under which \eqref{eq:degenerate-continuous-consistency} holds. Suppose, in addition, that Condition~\ref{ass:discrete-regularity} is satisfied.

    Let $(n_N)_{N\geq M_\sigma+2}$ be a deterministic sequence such that
        \begin{equation*}
            n_N\longrightarrow\infty, \qquad \Delta_{n_N} = \frac{T}{n_N} \longrightarrow0.
        \end{equation*}
    Then the discrete-observation estimator $\widehat{\beta}_{N,n_N}^{\,d}$ is well-defined almost surely and
        \begin{equation}
            \label{eq:degenerate-discrete-consistency}
            \widehat{\beta}_{N,n_N}^{\,d} \xrightarrow[N\to\infty]{\mathbb P} \beta.
        \end{equation}
    Suppose, in addition, that there exists a deterministic constant $J_\infty\in(0,\infty)$ such that every $\mu\in\mathfrak L$ satisfies
        \begin{equation*}
            \mathcal J(\mu)=J_\infty \qquad \text{almost surely},
        \end{equation*}
    and that
        \begin{equation}
            \label{eq:degenerate-discrete-mesh}
            N\Delta_{n_N}\longrightarrow0.
        \end{equation}
    Then
        \begin{equation}
            \label{eq:degenerate-discrete-clt}
            N\left(\widehat{\beta}_{N,n_N}^{\,d}-\beta\right)\xrightarrow[N\to\infty]{\mathcal L}\mathcal N\left(0,J_\infty^{-1}\right).
        \end{equation}
\end{thm}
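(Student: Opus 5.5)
We need to prove Theorem~\ref{thm:degenerate-discrete}. The plan is to compare the discrete estimator with the continuous one from Theorem~\ref{thm:degenerate-continuous} and show that the difference is negligible: $o_{\mathbb P}(1)$ for consistency, and $o_{\mathbb P}(N^{-1})$ for the central limit theorem. Write $a_N(t)=A_N(t)/C_N(t)$ and $M_N(t)=2N^{-1}\int_0^t\sqrt{C_N}\,dW$.

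Since $\varepsilon_N=1/N$, we have $\sqrt{\varepsilon_N/N}=1/N$. By \eqref{eq:intro-Y-one-dimensional}, on $[t_m,t_{m+1}]$
\begin{equation*}
Y_N(t_{m+1})-Y_N(t_m)-B_N(t_m)\Delta_n=\int_{t_m}^{t_{m+1}}\bigl(B_N(s)-B_N(t_m)\bigr)ds+2\beta\int_{t_m}^{t_{m+1}}A_N(s)\,ds+\bigl(M_N(t_{m+1})-M_N(t_m)\bigr).
\end{equation*}
This gives the decomposition
\begin{equation*}
\widehat\beta^{\,d}_{N,n}-\beta=\frac{1}{2J^d_{N,n}}\Bigl[R^{B}+2\beta R^{A}+\sum_m a_N(t_m)\Delta M_m\Bigr],
\end{equation*}
where
\begin{equation*}
R^{B}=\sum_m a_N(t_m)\int_{t_m}^{t_{m+1}}\bigl(B_N(s)-B_N(t_m)\bigr)ds,\qquad R^{A}=\sum_m a_N(t_m)\int_{t_m}^{t_{m+1}}\bigl(A_N(s)-A_N(t_m)\bigr)ds.
\end{equation*}
The analogous identity with $a_N(s)$ inside the integrals gives the continuous estimator. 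The stochastic term is treated by Itô isometry:
\begin{equation*}
N\sum_m\int_{t_m}^{t_{m+1}}\bigl(a_N(t_m)-a_N(s)\bigr)dM_N(s)
\end{equation*}
has second moment $4\,\mathbb E\int_0^T\bigl(a_N(\lfloor s\rfloor)-a_N(s)\bigr)^2C_N(s)\,ds$. This tends to zero once we have path regularity of $a_N$. The CLT part then follows from Theorem~\ref{thm:degenerate-continuous} together with $J^d_{N,n}-J_N\to0$ in probability. The latter is a Riemann-sum error controlled in the same way.

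The core estimate is a modulus-of-continuity bound for the observables $A_N,B_N,C_N$, using Condition~\ref{ass:discrete-regularity}. Apply Itô's formula to $\frac1N\sum_i\phi_{B,N}(X^i)$, $\frac1N\sum_i\phi_C(X^i)$ and $\frac1{N^2}\sum_{i<j}H(X^i,X^j)$, using the $C^2$ extensions; the particles stay in $E$, so this is legitimate. The martingale parts have quadratic variation of order $\varepsilon_N/N=N^{-2}$ times polynomial moments. The finite-variation parts involve the singular drift, and this is the main obstacle. For the $H$-functional, symmetrization turns $\sum_{j\neq i}\partial_1H(X^i,X^k)\,H(X^i,X^j)/(X^i-X^j)$ into difference quotients; by the mean value theorem these are bounded by second derivatives of $H$. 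The structure is the same as in the derivation of \eqref{eq:limit}, and is exactly why $f'(x)-f'(y)$ appears there. For the single-particle functionals, the symmetrized term is $\frac{\phi'(x)-\phi'(y)}{x-y}H$, bounded by $\sup|\phi''|\,H$. With the polynomial growth bounds and the uniform eighth-moment bounds, we get
\begin{equation*}
\mathbb E|F_N(s)-F_N(t_m)|\lesssim \Delta_n+N^{-1}\Delta_n^{1/2}\qquad\text{for }F_N\in\{A_N,B_N,C_N\}.
\end{equation*}
For $A_N$, the term $\frac{1}{N^2}\sum_i\partial^2_{11}H\,\sigma^2/N$ is also harmless.

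Next, control $a_N=A_N/C_N$ and $1/C_N$. By Theorem~\ref{thm:degenerate-continuous} and its proof we know $J_N\to\mathcal J(\mu)>0$ along subsequences. To bound $a_N(t_m)$ in the Riemann sums we localize on the event $\{\inf_tC_N(t)\ge\eta\}$. Its probability tends to $1$ as $\eta\to0$, uniformly in $N$, via tightness and $\mathcal C(\mu_t)>0$ for limits; I will take this positivity from the proof of the continuous theorem. On this event we get
\begin{equation*}
|R^A|+|R^B|\lesssim_\eta \sum_m\int_{t_m}^{t_{m+1}}|F_N(s)-F_N(t_m)|\,ds=O_{\mathbb P}\bigl(\Delta_n+N^{-1}\Delta_n^{1/2}\bigr).
\end{equation*}
This is $o_{\mathbb P}(1)$ in general, and $o_{\mathbb P}(N^{-1})$ when $N\Delta_n\to0$. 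This is precisely \eqref{eq:degenerate-discrete-mesh}.

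Finally, the martingale error above is $o_{\mathbb P}(1)$ after multiplication by $N$. Hence $N$ times the discrete martingale sum equals the continuous one plus $o_{\mathbb P}(1)$, and $J^d_{N,n}\to J_\infty$. Slutsky's lemma then yields \eqref{eq:degenerate-discrete-clt} and \eqref{eq:degenerate-discrete-consistency}. For consistency only, we use the cruder fact that the noise term is $O_{\mathbb P}(N^{-1})$.

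Well-definedness follows exactly as in the continuous case: non-collision together with Condition~\ref{ass:finite-sigma-zero-set} and $N\geq M_\sigma+2$ give $C_N(t_m)>0$. Moreover $A_N(t_m)$ does not vanish at every grid point, because $H(x,x)$ has at most one zero.
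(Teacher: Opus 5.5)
\textbf{Comparison with the paper.} Your route differs from the paper's mainly in the central limit step. The paper never compares with $\widehat\beta^{\,c}_N$. It writes $N(\widehat\beta^{\,d}_{N,n}-\beta)=(NU_{N,n}+L_{N,n}(T))/J^{\,d}_{N,n}$ and proves the Gaussian limit of $L_{N,n}(T)$ directly, through a martingale-difference-array CLT (Hall--Heyde). That argument uses predictable truncation of the weights at level $\Delta_n^{-1/8}$ and a fourth-moment Lyapunov condition. For the drift remainder, the paper splits $Z_N=B_N+2\beta A_N$ into drift and martingale parts and uses stochastic Fubini to obtain $U_{N,n}=O_{\mathbb P}(\Delta_n)$.

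You instead couple the discrete martingale sum to the continuous one, showing $\int_0^T(a_N(\lfloor s\rfloor)-a_N(s))\sqrt{C_N(s)}\,dW_s\to0$. You then inherit the Gaussian limit from Theorem~\ref{thm:degenerate-continuous} via Slutsky and $J^{\,d}_{N,n}-J_N\to0$. For the remainder you use only the $L^1$ modulus $\Delta_n+N^{-1}\Delta_n^{1/2}$, which is enough because $N\cdot N^{-1}\Delta_n^{1/2}=\Delta_n^{1/2}\to0$; the paper's finer split is not needed for the stated result. Your route is valid and somewhat lighter: it reuses the Dambis--Dubins--Schwarz argument already carried out and avoids the truncation sequence and the Lyapunov estimate. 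The cost is that you need $L^2$ rather than $L^1$ increment bounds for $A_N$ and $C_N$; Lemma~\ref{l:macroscopic-regularity} provides these, and your It\^o argument gives them as well.

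\textbf{Two points need repair.}

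First, your well-definedness argument for the denominator is wrong. Since the system is non-colliding, $A_N(t_m)=N^{-2}\sum_{i<j}H(X^{i,N}_{t_m},X^{j,N}_{t_m})$ only evaluates $H$ at pairs of distinct points. The number of zeros of $x\mapsto H(x,x)$ therefore says nothing about it; that diagonal assumption is only used for the non-degeneracy of the mean-field limits. The correct argument uses Condition~\ref{ass:noncollision-structure}(ii), which gives $A_N(t)\geq\frac14(1-\frac1N)\langle\mu^N_t,\sigma^2\rangle$. This is strictly positive because $C_N(t)>0$ forces $\sigma(X^{i,N}_t)\neq0$ for some $i$. Hence $A_N>0$ at every grid point and $J^{\,d}_{N,n}>0$.

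Second, the It\^o-isometry comparison cannot be localized on $\{\inf_t C_N(t)\geq\eta\}$, because that event is not adapted. Without localization, $\mathbb E\int_0^T(a_N(\lfloor s\rfloor)-a_N(s))^2C_N(s)\,ds$ need not even be finite, since there is no inverse-moment bound for $C_N$. Use instead the stopping time $\tau=\inf\{t: C_N(t)\leq\eta \text{ or } |A_N(t)|+C_N(t)\geq K\}$. Before $\tau$, the map $(a,c)\mapsto a/c$ is Lipschitz on the relevant range, so the stopped integral has second moment $\lesssim_{\eta,K}\Delta_n^2+N^{-2}\Delta_n$. Moreover, $\mathbb P(\tau\leq T)$ is small for large $N$ by \eqref{eq:continuous-information-tightness} and the moment bounds. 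The same localization should also bound $\sup_t|A_N(t)|$ in your Riemann-sum estimates for $R^A$ and $R^B$.

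With these fixes, the proposal goes through.
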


\begin{rem}
    The assumption that $X_0^N$ belongs to the open chamber is imposed only to avoid the additional diffraction condition needed for initially colliding particles. Such initial conditions can also be treated by imposing the corresponding condition from \cite{GraMal14}, but this is more naturally verified in individual examples.

    No separate assumption that $C_N(0)>0$ is needed in this regime. Indeed, under $X_0^N\in W_N$ and $N\geq M_\sigma+2$, the initial configuration contains at least $M_\sigma+2$ distinct points, whereas $x\mapsto x\sigma(x)$ has at most $M_\sigma+1$ distinct zeros. Consequently, $C_N(0)>0$ almost surely.
\end{rem}

\section{Examples and numerical illustrations}{\label{sec: numerical}}
In this section we discuss three important examples and illustrate the performance of our estimators with numerical experiments. In all cases we use the largest microscopic noise covered by our results, namely $\varepsilon_N\equiv1$ for the two Dyson-type systems and $\varepsilon_N=1/N$ for the Laguerre system, so that the rates are $r_N=\sqrt N$ and $r_N=N$, respectively. For each configuration we generate $R$ independent replications and report bias and MSE together with the normalised mean squared error $r_N^2\,\mathrm{MSE}$. When the central limit theorems hold, $r_N^2\,\mathrm{MSE}$ should approach $J_\infty^{-1}$.

\subsection{Dyson Brownian motion}
\label{subsec:ex-dyson}

As discussed in the motivation, the Dyson Brownian motion \eqref{eq:intro-dyson} is obtained from \eqref{eq:intro-general-system} by setting $\sigma\equiv1$, $b\equiv0$ and $H\equiv\frac12$. Conditions~\ref{ass:epsilon}--\ref{ass:finite-sigma-zero-set} 
and \ref{ass:discrete-regularity} are trivially satisfied (with $M_\sigma=0$ and $E=\mathbb R$), the interaction is strictly positive, and the three observable processes simplify to
    \begin{equation*}
        A_N\equiv a_N:=\frac{N-1}{4N},\qquad B_N\equiv\varepsilon_N,\qquad C_N(t)=Y_N(t).
    \end{equation*}
The initial conditions~\ref{ass:initial-conv}--\ref{ass:initial-limit-non-zero} hold, for instance, if $X_0^N$ consists of the order statistics of $N$ i.i.d.\ random variables with law $\mu_0$ such that $\mu_0(\{0\})=0$ and $\int x^8\,\mu_0(dx)<\infty$, or if $X_0^N$ is a deterministic configuration such as the $N$ quantiles of such a law.

In this case we can compute $J_\infty$ explicitly. By the argument of \cite[Remark~1 and Section~4.1]{MalPer22}, which only uses the uniform moment bounds, the limiting equation \eqref{eq:limit} also holds for polynomial test functions $f$. Evaluating both sides for $f(x)=x^2$, for which $(f'(x)-f'(y))/(x-y)\equiv2$, yields
    \begin{equation*}
        \mathcal C(\mu_t)=\mathcal C(\mu_0)+\kappa t,\qquad \kappa:=\varepsilon+\frac\beta2,
    \end{equation*}
for every $\mu\in\mathfrak L$. Since moreover $\mathcal A(\nu)=\frac14$ for every probability measure $\nu$, condition \eqref{eq:positive-common-information} holds with
    \begin{equation}
        \label{eq:dyson-Jinf}
        J_\infty=\mathcal J(\mu)=\frac1{16}\int_0^T\frac{dt}{\mathcal C(\mu_0)+\kappa t}=\frac1{16\kappa}\log\Bigl(1+\frac{\kappa T}{\mathcal C(\mu_0)}\Bigr).
    \end{equation}
Thus Theorems~\ref{thm:intro-continuous} and~\ref{thm:intro-discrete} apply. They ensure that the estimator
    \begin{equation}
        \label{eq:dyson-cont}
        \widehat\beta^{\,c}_N=\frac{2N}{N-1}\,\frac{\displaystyle\int_0^T\frac{dY_N(t)-\varepsilon_N\,dt}{Y_N(t)}}{\displaystyle\int_0^T\frac{dt}{Y_N(t)}}
    \end{equation}
as well as its discrete-observation counterpart
    \begin{equation}
        \label{eq:dyson-disc}
        \widehat\beta^{\,d}_{N,n}=\frac{2N}{N-1}\,\frac{\displaystyle\sum_{m=0}^{n-1}\frac{Y_N(t_{m+1})-Y_N(t_m)-\varepsilon_N\Delta_n}{Y_N(t_m)}}{\displaystyle\Delta_n\sum_{m=0}^{n-1}\frac{1}{Y_N(t_m)}}
    \end{equation}
are consistent, and asymptotically normal with the explicit asymptotic variance $J_\infty^{-1}$, the latter once the mesh condition \eqref{eq:intro-discrete-mesh-condition} is satisfied.

Recall that for the Dyson Brownian motion, the reduced equation \eqref{eq:intro-dyson-Y-SDE} is an autonomous one-dimensional diffusion with small diffusion coefficient. Up to the $N$-dependence of the drift, this is the setting of \cite{GloSor}, where SDEs with small-noise asymptotics $\delta=\delta_N\to0$ are considered. In that setting, the drift parameter is estimated at rate $\delta^{-1}$, which coincides with our rate $r_N=\delta_N^{-1}$.

\textbf{Numerical results.}
Since $\widehat\beta^{\,c}_N$ depends on the particles only through $Y_N$, which is a squared Bessel process by \eqref{eq:intro-dyson-Y-SDE}, it suffices to simulate $Y_N$. We do so with the exact scheme based on its non-central chi-square transition law \cite[Section~3.4]{Glass}. We consider the discretised estimator $\widehat\beta^{\,d}_{N,n}$ with $n=1000$ and $n=10$ observations.

We take $\beta=2$, $T=1$, $\varepsilon_N\equiv1$ and the quantiles $x^0_i=-1+(2i-1)/N$ of $\mu_0=\mathcal U[-1,1]$ as initial configuration. Then $\mathcal C(\mu_0)=\frac13$ and $\kappa=2$, and \eqref{eq:dyson-Jinf} gives $J_\infty^{-1}=32/\log7\approx16.44$. We use $N\in\{250,500\}$ and $R=4000$.

The results are given in Table~\ref{tab:sim-dyson}. For $n=1000$, where $\Delta_n\sqrt N\leq0.023$, $N\,\mathrm{MSE}$ agrees with $J_\infty^{-1}$ up to the Monte Carlo error. With only $n=10$ observations, where $\Delta_n\sqrt N\approx2$, the estimator is only slightly less efficient. Since the drift of $Y_N$ is constant, its Euler approximation in \eqref{eq:dyson-disc} is exact, and the discretisation only inflates the variance.

\begin{table}[ht]
    \centering
    \small
    \begin{tabular}{rrrrr}
\toprule
$N$ & $n$ & Bias & MSE & $N\,$MSE\\
\midrule
250 & 10 & $1.18\cdot10^{-2}$ & $6.76\cdot10^{-2}$ & 16.89\\
250 & 1000 & $1.33\cdot10^{-2}$ & $6.63\cdot10^{-2}$ & 16.57\\
500 & 10 & $8.63\cdot10^{-4}$ & $3.36\cdot10^{-2}$ & 16.79\\
500 & 1000 & $1.56\cdot10^{-3}$ & $3.32\cdot10^{-2}$ & 16.62\\
\bottomrule
\end{tabular}

    \caption{Dyson Brownian motion, $\beta=2$, $\varepsilon_N\equiv1$, $T=1$, $R=4000$; estimator \eqref{eq:dyson-disc} with $n$ equidistant observations. Here $J_\infty^{-1}\approx16.44$.}
    \label{tab:sim-dyson}
\end{table}

\subsection{Dyson Ornstein--Uhlenbeck process}
\label{subsec:ex-dyson-ou}

Adding a linear restoring drift, i.e.\ taking $\sigma\equiv1$, $H\equiv\frac12$ and $b(x)=-\theta x$ with a known $\theta>0$, yields the Dyson Ornstein--Uhlenbeck process, or the Dyson process with drift. For $\varepsilon_N=1/N$ and $\beta\in\{1,2,4\}$ its stationary law is the Gaussian orthogonal, unitary and symplectic ensemble, respectively, see e.g. \cite{BCL23}. The verification of the assumptions is analogous to Section~\ref{subsec:ex-dyson}. In Condition~\ref{ass:discrete-regularity} we have $E=\mathbb R$, $\phi_{B,N}(x)=\varepsilon_N-2\theta x^2$ and $\phi_C(x)=x^2$. The observables are
    \begin{equation*}
        A_N\equiv\frac{N-1}{4N},\qquad B_N(t)=\varepsilon_N-2\theta Y_N(t),\qquad C_N(t)=Y_N(t),
    \end{equation*}
and $Y_N$ is a Cox--Ingersoll--Ross process. As in Section~\ref{subsec:ex-dyson}, evaluating the limiting equation at $f(x)=x^2$ gives $\frac{d}{dt}\mathcal C(\mu_t)=\kappa-2\theta\,\mathcal C(\mu_t)$ for every $\mu\in\mathfrak L$, i.e.
    \begin{equation*}
        \mathcal C(\mu_t)=y_*+\bigl(\mathcal C(\mu_0)-y_*\bigr)e^{-2\theta t},\qquad y_*:=\frac{\kappa}{2\theta},
    \end{equation*}
and therefore
    \begin{equation}
        \label{eq:dyson-ou-Jinf}
        J_\infty=\frac1{16}\int_0^T\frac{dt}{\mathcal C(\mu_t)}=\frac1{16y_*}\Bigl(T+\frac1{2\theta}\log\frac{\mathcal C(\mu_T)}{\mathcal C(\mu_0)}\Bigr).
    \end{equation}
The estimators take the form
    \begin{align}
        \label{eq:dyson-ou-est}
        \widehat\beta^{\,c}_N&=\frac{2N}{N-1}\,\frac{\displaystyle\int_0^T\frac{dY_N(t)-(\varepsilon_N-2\theta Y_N(t))\,dt}{Y_N(t)}}{\displaystyle\int_0^T\frac{dt}{Y_N(t)}},\\
        \label{eq:dyson-ou-est-disc}
        \widehat\beta^{\,d}_{N,n}&=\frac{2N}{N-1}\,\frac{\displaystyle\sum_{m=0}^{n-1}\frac{Y_N(t_{m+1})-Y_N(t_m)-(\varepsilon_N-2\theta Y_N(t_m))\Delta_n}{Y_N(t_m)}}{\displaystyle\Delta_n\sum_{m=0}^{n-1}\frac{1}{Y_N(t_m)}},
    \end{align}
and Theorems~\ref{thm:intro-continuous} and~\ref{thm:intro-discrete} apply as before. Since $Y_N$ is again an autonomous small-noise diffusion, the comparison with \cite{GloSor} carries over.

\textbf{Numerical results.}
We simulate $Y_N$ exactly as in Section~\ref{subsec:ex-dyson}, see \cite[Section~3.4]{Glass}. Unlike for the Dyson Brownian motion, the drift of $Y_N$ is not constant, so the Euler approximation of the drift in \eqref{eq:dyson-ou-est-disc} produces a bias of order $\Delta_n$. We use this example to illustrate the mesh condition \eqref{eq:intro-discrete-mesh-condition}, which here reads $\Delta_n\sqrt N\to0$.

We take $\beta=2$, $\theta=1$, $T=1$, $\varepsilon_N\equiv1$ and the quantiles of $\mu_0=\mathcal U[-3,3]$ as initial configuration. Then $\mathcal C(\mu_0)=3$ is far from the stationary value $y_*=1$, so that the drift of $Y_N$ is large, and \eqref{eq:dyson-ou-Jinf} gives $J_\infty^{-1}\approx28.05$. We use $N\in\{250,500\}$, $R=4000$, and $n\in\{10,100,1000\}$ observations, all taken from the same simulated paths.

The results are given in Table~\ref{tab:sim-dyson-ou}. With $n=1000$, $N\,\mathrm{MSE}$ is close to $J_\infty^{-1}$ for both values of $N$. With $n=10$, the bias is about $0.32$ for both values of $N$, i.e.\ it is of order $\Delta_n$ and does not decrease with $N$. Moreover, the normalised MSE grows from $53.1$ for $N=250$ to $76.4$ for $N=500$. With $n=100$, the bias is already small, and $N\,\mathrm{MSE}$ is only slightly above $J_\infty^{-1}$. This is in accordance with \eqref{eq:intro-discrete-mesh-condition}.

\begin{table}[ht]
    \centering
    \small
    \begin{tabular}{rrrrr}
\toprule
$N$ & $n$ & Bias & MSE & $N\,$MSE\\
\midrule
250 & 10 & $3.26\cdot10^{-1}$ & $2.13\cdot10^{-1}$ & 53.14\\
250 & 100 & $4.76\cdot10^{-2}$ & $1.15\cdot10^{-1}$ & 28.72\\
250 & 1000 & $2.06\cdot10^{-2}$ & $1.14\cdot10^{-1}$ & 28.41\\
500 & 10 & $3.16\cdot10^{-1}$ & $1.53\cdot10^{-1}$ & 76.37\\
500 & 100 & $3.70\cdot10^{-2}$ & $5.74\cdot10^{-2}$ & 28.72\\
500 & 1000 & $9.88\cdot10^{-3}$ & $5.65\cdot10^{-2}$ & 28.23\\
\bottomrule
\end{tabular}

    \caption{Dyson Ornstein--Uhlenbeck process, $\beta=2$, $\theta=1$, $\varepsilon_N\equiv1$, $T=1$, $R=4000$; estimator \eqref{eq:dyson-ou-est-disc} with $n$ equidistant observations. Here $J_\infty^{-1}\approx28.05$.}
    \label{tab:sim-dyson-ou}
\end{table}

\subsection{Laguerre (Wishart) particle systems}
\label{subsec:ex-laguerre}

Our third example has a degenerate interaction. Let $\varepsilon_N=1/N$, $\beta>1$, and
    \begin{equation*}
        \sigma(x)=2\sqrt{|x|},\qquad b(x)\equiv c,\qquad H(x,y)=|x|+|y|,
    \end{equation*}
with a known constant $c\geq\beta$. This restriction will be motivated below. Note that, as explained in \cite{GraMal14}, for $\beta=2$ and $c=p/N$ this process defines the so-called non-colliding Squared Bessel particle system, explicitly computable via the eigenvalue process of the complex Wishart process. 


Since $H(0,0)=0$, the interaction is degenerate. Conditions~\ref{ass:epsilon}--\ref{ass:bounds} are immediate, and $\sigma$ vanishes only at the origin, so $M_\sigma=1$, while $x\mapsto H(x,x)=2|x|$ has exactly one zero. Condition~\ref{ass:strong-regularity} holds with $\varrho(u)=4u$, since $|\sqrt{|x|}-\sqrt{|y|}|^2\leq|x-y|$, and $b$ is constant. Condition~\ref{ass:noncollision-structure} is verified by a simple case distinction on the signs of the arguments.
Finally, by \cite[Corollary~6.5]{GraMal14}, applied after rescaling by $N$, the particles started from a non-negative configuration remain in $[0,\infty)$ provided that $c\geq\beta(N-1)/N$. Hence, under $c\geq \beta$, Condition~\ref{ass:discrete-regularity} holds with $E=[0,\infty)$: on $E$ we have $H(x,y)=x+y$, $\phi_{B,N}(x)=(2c+4/N)\,x$ and $\phi_C(x)=4x^3$. Moreover, every $\mu_t$, $\mu\in\mathfrak L$, is supported in $[0,\infty)$.

To provide an explicit form of the estimators in this case, first let $M_N^{(k)}(t)=\frac1N\sum_{i=1}^N(X_t^{i,N})^k$ denote the $k$-th empirical moment, so that $Y_N=M_N^{(2)}$. Since $\sum_{i<j}(x_i+x_j)=(N-1)\sum_ix_i$, on $[0,\infty)^N$ we have
    \begin{equation*}
        A_N=\frac{N-1}{N}M_N^{(1)},\qquad B_N=\Bigl(2c+\frac{4}{N}\Bigr)M_N^{(1)},\qquad C_N=4M_N^{(3)},
    \end{equation*}
and the estimators depend only on the first three empirical moments:
    \begin{align}
        \label{eq:laguerre-cont}
        \widehat\beta_N^{\,c}&=\frac{N}{2(N-1)}\;\frac{\displaystyle\int_0^T\frac{M^{(1)}_N(t)}{M^{(3)}_N(t)}\Bigl(dY_N(t)-\Bigl(2c+\frac{4}{N}\Bigr)M^{(1)}_N(t)\,dt\Bigr)}{\displaystyle\int_0^T\frac{M^{(1)}_N(t)^2}{M^{(3)}_N(t)}\,dt},\\
        \label{eq:laguerre-disc}
        \widehat\beta^{\,d}_{N,n}&=\frac{N}{2(N-1)}\;\frac{\displaystyle\sum_{m=0}^{n-1}\frac{M^{(1)}_N(t_m)}{M^{(3)}_N(t_m)}\Bigl(Y_N(t_{m+1})-Y_N(t_m)-\Bigl(2c+\frac{4}{N}\Bigr)M^{(1)}_N(t_m)\Delta_n\Bigr)}{\displaystyle\Delta_n\sum_{m=0}^{n-1}\frac{M^{(1)}_N(t_m)^2}{M^{(3)}_N(t_m)}}.
    \end{align}
To compute $J_\infty$, write $m_k(t)=\int x^k\mu_t(dx)$ and $a_k=m_k(0)$ for $\mu\in\mathfrak L$. On $[0,\infty)$ we have $\mathcal A(\mu_t)=m_1(t)$ and $\mathcal C(\mu_t)=s^2m_3(t)$. As in Section~\ref{subsec:ex-dyson}, we evaluate the limiting equation (with $\varepsilon=0$) at $f(x)=x^k$, $k=1,2,3$, using
    \begin{equation*}
        \frac{f'(x)-f'(y)}{x-y}H(x,y)=0,\quad2(x+y),\quad3(x+y)^2\qquad(x,y\geq0),
    \end{equation*}
respectively. This gives the triangular linear system
    \begin{equation*}
        m_1'=c,\qquad m_2'=2(c+\beta)\,m_1,\qquad m_3'=3(c+\beta)\,m_2+3\beta\,m_1^2,
    \end{equation*}
which is the case $k\leq3$ of the moment recursion in \cite[Section~6]{MalPer22}. Its solution is the same for every $\mu\in\mathfrak L$:
    \begin{align*}
        m_1(t)&=a_1+ct,\qquad m_2(t)=a_2+2(c+\beta)\Bigl(a_1t+\frac{ct^2}{2}\Bigr),\\
        m_3(t)&=a_3+3(c+\beta)\Bigl(a_2t+(c+\beta)\Bigl(a_1t^2+\frac{ct^3}{3}\Bigr)\Bigr)+\frac{\beta}{c}\Bigl((a_1+ct)^3-a_1^3\Bigr).
    \end{align*}
Hence condition \eqref{eq:degenerate-common-information} holds with
    \begin{equation}
        \label{eq:laguerre-Jinf}
        J_\infty=\frac{1}{4}\int_0^T\frac{m_1(t)^2}{m_3(t)}\,dt,
    \end{equation}
and Theorems~\ref{thm:degenerate-continuous} and~\ref{thm:degenerate-discrete} apply to particle systems started in $W_N\cap(0,\infty)^N$.

\textbf{Numerical results.}
We simulate the particle system exactly through the eigenvalues of a complex Wishart process with $p=2N$ degrees of freedom \cite{KOC01}. After the rescaling, this corresponds to $\beta=2$ and $c=4$. We use the estimator \eqref{eq:laguerre-disc} with $n=1250$ observations. We take $T=1$ and the quantiles of $\mu_0=\mathcal U[1,2]$ as initial configuration, so that $a_1=\frac32$, $a_2=\frac73$, $a_3=\frac{15}4$, and \eqref{eq:laguerre-Jinf} gives $J_\infty^{-1}\approx24.86$. We use $N\in\{125,250\}$ and $R=100$.

The results are given in Table~\ref{tab:sim-laguerre}. For $R=100$ replications, the bias is small, and $N^2\,\mathrm{MSE}$ decreases from $32.5$ for $N=125$ to $24.1$ for $N=250$, close to $J_\infty^{-1}\approx24.86$.

\begin{table}[ht]
    \centering
    \small
 \begin{tabular}{rrrrr}
\toprule
$N$ & $n$ & Bias & MSE & $N^2\,$MSE\\
\midrule
125 & 1250 & $5.24\cdot10^{-3}$ & $2.08\cdot10^{-3}$ & 32.49\\
250 & 1250 & $4.66\cdot10^{-3}$ & $3.86\cdot10^{-4}$ & 24.12\\
\bottomrule
\end{tabular}

    \caption{Laguerre system, $\beta=2$, $c=4$, $\varepsilon_N=1/N$, $T=1$, $R=100$; estimator \eqref{eq:laguerre-disc} with $n=1250$ observations. Here $J_\infty^{-1}\approx24.86$.}
    \label{tab:sim-laguerre}
\end{table}

\section{Continuous observations}
\label{s:continuous}

This section proves Theorems~\ref{thm:intro-continuous} and \ref{thm:degenerate-continuous}. The statistical argument is common to both regimes: the estimation error is a continuous local martingale divided by its quadratic variation, up to the deterministic factor $\delta_N=\sqrt{\varepsilon_N/N}$. The regimes differ only in how we establish positivity of the observable coefficients and non-degeneracy of the limiting information.

Throughout, $T>0$ is fixed. We retain the notation $Y_N,A_N,B_N,C_N,J_N,\mathfrak L$ from the introduction and write
    \begin{equation*}
        Z_\sigma:=\{x\in\mathbb R:x\sigma(x)=0\}.
    \end{equation*}
Condition~\ref{ass:finite-sigma-zero-set} implies $1\leq\#Z_\sigma\leq M_\sigma+1$. For $f\in C^2(\mathbb R)$, we use the continuous divided difference
    \begin{equation*}
        \mathcal D_f(x,y):= \begin{cases}
                                \displaystyle\frac{f'(x)-f'(y)}{x-y},&x\neq y,\\[1.2ex]
                                f''(x),&x=y.
                            \end{cases}
    \end{equation*}
The symmetrized identities below hold for Lebesgue-a.e. time in the non-sticky regime and at every time in the non-colliding regime.

\subsection{Finite-particle preliminaries}

We first check that the coefficients used in the estimator do not vanish. The existence and collision properties needed for this verification are established within the proof.

\begin{lemma}[Positivity of the observable coefficients]
    \label{lem:continuous-finite-positivity}
    Assume the coefficient and initial conditions of Theorem~\ref{thm:intro-continuous} or Theorem~\ref{thm:degenerate-continuous}, in the corresponding regime. For every $N$ allowed in that theorem, the corresponding observable processes satisfy
        \begin{equation}
            \label{eq:continuous-finite-positivity}
            \inf_{0\leq t\leq T}C_N(t)>0, \qquad \inf_{0\leq t\leq T}A_N(t)>0 \qquad\text{almost surely}.
        \end{equation}
    In particular, $0<J_N<\infty$ and $\widehat\beta_N^{\,c}$ is well-defined almost surely.
\end{lemma}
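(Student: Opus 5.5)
The plan is to combine path continuity with a pointwise positivity argument. Under either theorem, the process $X^N$ has continuous paths in $\overline W_N$, and $H$, $\sigma$ are continuous. Hence $t\mapsto A_N(t)$ and $t\mapsto C_N(t)$ are continuous on the compact interval $[0,T]$. It therefore suffices to show that $A_N(t)>0$ and $C_N(t)>0$ for every $t\in[0,T]$ almost surely, since a continuous positive function on $[0,T]$ attains a positive minimum. Once the two infima are positive, the bound $0<J_N<\infty$ follows: the integrand $A_N^2/C_N$ is continuous, strictly positive and bounded on $[0,T]$. The stochastic integral $\int_0^T (A_N/C_N)\,dY_N$ is then well-defined, because $A_N/C_N$ is a continuous adapted process and $Y_N$ is a continuous semimartingale by \eqref{eq:intro-Y-one-dimensional}.

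\emph{Strictly positive regime.} $A_N(t)$ is a sum of $N(N-1)/2$ terms, each positive by \eqref{eq:strictly-positive-H}, so $A_N(t)>0$ for all $t$ without further work. For $C_N$, I use the solution theory recalled before Theorem~\ref{thm:intro-continuous}. The existence result of \cite{Mal26} provides a weak non-sticky solution that excludes positive-time collisions of three or more particles at one point. Since $\#Z_\sigma\le M_\sigma+1$, at each $t>0$ at most $2(M_\sigma+1)$ particles lie in $Z_\sigma$. Because $N\ge 2M_\sigma+3$, some particle satisfies $X_t^{i,N}\sigma(X_t^{i,N})\ne0$, and so $C_N(t)>0$. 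At $t=0$ the same conclusion is exactly \ref{ass:initial:non-zero}. A subtle point is that the no-triple-collision property must hold simultaneously for all $t>0$ on one full-measure event, not merely for each fixed $t$. I will take this from the pathwise statement in \cite{Mal26}.

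\emph{Degenerate regime.} Here I first invoke \cite{GraMal14}. As explained in Subsection~\ref{subsec:degenerate-results}, conditions \ref{ass:strong-regularity}, \ref{ass:noncollision-structure} and $\beta>1$ (after rescaling) place the system in the non-colliding framework there. Starting from $X_0^N\in W_N$, this gives a unique strong solution that stays in $W_N$ for all $t\in[0,T]$ almost surely, so all coordinates are distinct at every time. For $C_N$, among $N\ge M_\sigma+2$ distinct points at least one lies outside $Z_\sigma$, hence $C_N(t)>0$. For $A_N$, I argue by contradiction. Suppose $A_N(t)=0$. Then, since $H\ge0$, we have $H(X^{i}_t,X^{j}_t)=0$ for all pairs $i<j$. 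The task is to turn this into a contradiction using \ref{ass:noncollision-structure}. Condition (ii) gives $\sigma^2(x)+\sigma^2(y)\le 4H(x,y)=0$, so every particle would lie in the zero set of $\sigma$, which has at most $M_\sigma<N$ points. This contradicts distinctness.

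The main obstacle is the $A_N$ step in the degenerate case. I would try condition (ii) first as above. As a fallback, I would use (iii) together with the single zero of $H(x,x)$, showing that $H$ can vanish on at most one pair configuration and so cannot vanish on all pairs of $N\ge3$ distinct points. A second point to verify is that the hypotheses of \cite{GraMal14} hold on all of $[0,T]$ with the scaled coefficients.
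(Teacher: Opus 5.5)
Your positivity argument is essentially the paper's. You reduce to pointwise positivity by continuity on $[0,T]$. You get $C_N(t)>0$ by counting particles in the finite set $Z_\sigma$: at most two per point at positive times in the strictly positive regime, at most one per point in the non-colliding regime, with Condition~\ref{ass:initial:non-zero} at $t=0$. You get $A_N>0$ either from $H>0$ or from Condition~\ref{ass:noncollision-structure}(ii). The paper phrases the latter as the direct bound $A_N(t)\geq\frac14(1-\frac1N)\langle\mu_t^N,\sigma^2\rangle$, which is positive because $C_N(t)>0$; this is equivalent to your contradiction. Your fallback would also work. If $H(x,z)=0$ for some $x<z$, Condition~\ref{ass:noncollision-structure}(iii) and $H\geq0$ force $H$ to vanish on all pairs inside $[x,z]$. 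By continuity the diagonal then vanishes on an interval, contradicting the single zero of $x\mapsto H(x,x)$.

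The paper's proof also secures the solution theory, which you take from the citations without checking. In the degenerate regime, Theorem~\ref{thm:degenerate-continuous} does not assume a global solution. \cite{GraMal14} (Proposition~6 for an interior start, plus its existence and pathwise-uniqueness arguments) yields a unique strong non-colliding solution only up to explosion. So your claim that it stays in $W_N$ on all of $[0,T]$ needs a non-explosion step. The paper supplies it via \cite[Theorem~5]{Mal26}, checking the radial growth condition
$\sum_i x_ib(x_i)+\frac{\varepsilon_N}{2}\sum_i\sigma^2(x_i)+\frac{\beta}{N}\sum_{i<j}H(x_i,x_j)\leq K_\beta(N+\sum_i x_i^2)$
from Conditions~\ref{ass:epsilon} and \ref{ass:bounds}. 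In the strictly positive regime the theorem fixes an arbitrary global weak solution. You therefore need the no-triple-collision property for that solution, not just for the one built in \cite{Mal26}. The paper notes that $H>0$ together with integrability of the singular drift makes every solution non-sticky, and that the proof of \cite[Theorem~2]{Mal26} applies to each such solution. Both additions are routine and do not change your approach.
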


\begin{proof}
    In the strictly positive regime, \cite[Theorem~1]{Mal26} provides a weak solution up to its explosion time. Since $H>0$, integrability of the singular drift implies that every such solution is non-sticky. The argument proving \cite[Theorem~2]{Mal26} applies to every such solution and excludes positive-time triple collisions. The required compatibility of the diffusion coefficients on collision walls is automatic, since the same function $\sigma$ is used for every particle.
    
    In the degenerate regime, apply the non-collision result for an interior initial configuration from \cite[Proposition~6]{GraMal14} with
        \begin{equation*}
            \widetilde\sigma_i(x)=\frac{\sigma(x)}{\sqrt N}, \qquad \widetilde b_i(x)=b(x), \qquad \widetilde H_{ij}(x,y)=\frac{\beta}{N}H(x,y).
        \end{equation*}
    Indeed, Condition~\ref{ass:noncollision-structure}(ii) gives
        \begin{equation*}
            \widetilde\sigma_i^2(x)+\widetilde\sigma_j^2(y) =\frac{\sigma^2(x)+\sigma^2(y)}{N} \leq\frac{4H(x,y)}{N} \leq4\widetilde H_{ij}(x,y),
        \end{equation*}
    while Condition~\ref{ass:noncollision-structure}(iii) verifies the three-particle condition (A3) therein. The drift functions are identical and Lipschitz, so their ordering condition is automatic.

    Since $X_0^N\in W_N$, Proposition~6 of \cite{GraMal14} gives non-collision without the diffraction assumption (A4) therein. The existence construction in Section~5 of that paper applies with Proposition~6 in place of its Theorem~2; see also Remark~4 therein. Once non-collision is established, Condition~\ref{ass:strong-regularity} and Condition~\ref{ass:noncollision-structure}(i) give pathwise uniqueness by the argument of \cite[Theorem~4]{GraMal14}. The Yamada-Watanabe theorem therefore gives a unique strong solution up to explosion.

    In both regimes, non-explosion follows from \cite[Theorem~5]{Mal26}. Indeed, Conditions~\ref{ass:epsilon} and \ref{ass:bounds} imply
        \begin{equation*}
            \sum_{i=1}^N x_i b(x_i) +\frac{\varepsilon_N}{2}\sum_{i=1}^N\sigma^2(x_i) +\frac{\beta}{N}\sum_{i<j}H(x_i,x_j) \leq K_\beta\left(N+\sum_{i=1}^N x_i^2\right).
        \end{equation*}
    For each fixed $N$, this verifies the radial growth condition \textup{(G1)} of that theorem. Consequently, the solutions constructed above are global.
    Recall that
        \begin{equation*}
            Z_\sigma:=\{x\in\mathbb R:x\sigma(x)=0\}, \qquad \#Z_\sigma\leq M_\sigma+1.
        \end{equation*}
    Since $C_N$ is an average of non-negative terms, $C_N(t)=0$ would require every particle to belong to $Z_\sigma$. In the strictly positive regime, at most two particles can occupy any fixed point at a positive time. Consequently, $C_N(t)=0$ for some $t>0$ would imply
        \begin{equation*}
            N\leq2\#Z_\sigma\leq2M_\sigma+2,
        \end{equation*}
    contradicting $N\geq2M_\sigma+3$. At time zero, positivity follows from Condition~\ref{ass:initial:non-zero}. In the degenerate regime, all particle locations are distinct for every $t\geq0$. Thus $C_N(t)=0$ would imply
        \begin{equation*}
            N\leq\#Z_\sigma\leq M_\sigma+1,
        \end{equation*}
    contradicting $N\geq M_\sigma+2$. Hence, in either regime, $C_N(t)>0$ throughout $[0,T]$ almost surely.

    Strict positivity of $H$ immediately gives $A_N(t)>0$ in the first regime. In the degenerate regime, Condition~\ref{ass:noncollision-structure}(ii) yields
        \begin{equation*}
            A_N(t) \geq \frac{1}{4N^2}\sum_{i<j} \left(\sigma^2(X_t^{i,N})+\sigma^2(X_t^{j,N})\right) = \frac14\left(1-\frac1N\right)\left\langle\mu_t^N,\sigma^2\right\rangle >0,
        \end{equation*}
    where the last inequality follows from $C_N(t)>0$.

    Finally, $A_N$ and $C_N$ are continuous. Their strict positivity on the compact interval $[0,T]$ therefore proves \eqref{eq:continuous-finite-positivity} and gives $0<J_N<\infty$. The weight $A_N/C_N$ is continuous and pathwise bounded on $[0,T]$, so the stochastic integral defining $\widehat\beta_N^{\,c}$ is well-defined.
\end{proof}


\subsection{A martingale argument}

The following proposition isolates the statistical part of both proofs. 

\begin{proposition}[Martingale representation and inference]
    \label{prop:continuous-martingale-criterion}
    Suppose that the particle systems satisfy \eqref{eq:intro-Y-one-dimensional} and that $C_N(t)>0$ on $[0,T]$ and $0<J_N<\infty$ almost surely. Define
    \begin{equation}
        \label{eq:continuous-score}
        M_N(t):=\frac1{\sqrt N}\sum_{i=1}^N \int_0^t \frac{A_N(s)X_s^{i,N}\sigma(X_s^{i,N})}{C_N(s)} \,dB_s^i.
    \end{equation}
    Then
    \begin{equation}
        \label{eq:continuous-score-error}
        \langle M_N\rangle_T=J_N, \qquad \widehat\beta_N^{\,c}-\beta =\delta_N\frac{M_N(T)}{J_N}.
    \end{equation}
    If $r_N^2J_N\to\infty$ in probability, then $\widehat\beta_N^{\,c}\to\beta$ in probability. If, moreover, $J_N\to J_\infty$ in probability for a deterministic $J_\infty\in(0,\infty)$, then
    \begin{equation*}
        r_N(\widehat\beta_N^{\,c}-\beta)\xrightarrow[N\to\infty]{\mathcal L} \mathcal N(0,J_\infty^{-1}).
    \end{equation*}
\end{proposition}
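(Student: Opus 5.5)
We begin by making the one-dimensional equation \eqref{eq:intro-Y-one-dimensional} explicit at the level of the driving Brownian motions, and only then pass to the martingale limit theorems.

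\textbf{Step 1 (Itô's formula for $Y_N$).} Apply Itô's formula to $Y_N=\frac1N\sum_i (X^{i,N})^2$ and use the symmetrisation identity for $H$, valid for Lebesgue-a.e.\ $t$. This gives
\begin{equation*}
dY_N(t)=\frac{2\sqrt{\varepsilon_N}}{N}\sum_{i=1}^N X_t^{i,N}\sigma(X_t^{i,N})\,dB_t^i+\bigl(B_N(t)+2\beta A_N(t)\bigr)\,dt .
\end{equation*}
The martingale part has quadratic variation $4(\varepsilon_N/N)C_N\,dt$, consistent with \eqref{eq:intro-Y-one-dimensional}.

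\textbf{Step 2 (error representation).} Substitute this into the definition of $\widehat\beta_N^{\,c}$. The drift contributes
\begin{equation*}
\frac{1}{2J_N}\int_0^T\frac{A_N}{C_N}\,2\beta A_N\,dt=\beta .
\end{equation*}
The noise contributes
\begin{equation*}
\frac{1}{2J_N}\,\frac{2\sqrt{\varepsilon_N}}{N}\sum_i\int_0^T\frac{A_N X^{i,N}\sigma(X^{i,N})}{C_N}\,dB^i=\frac{1}{J_N}\sqrt{\frac{\varepsilon_N}{N}}\,M_N(T)=\delta_N\frac{M_N(T)}{J_N}.
\end{equation*}
The integrands are continuous and adapted because $C_N>0$ on $[0,T]$, so $M_N$ is a continuous local martingale. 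Since the $B^i$ are independent,
\begin{equation*}
\langle M_N\rangle_T=\frac1N\sum_i\int_0^T\frac{A_N^2 (X^{i,N}\sigma)^2}{C_N^2}\,dt=\int_0^T\frac{A_N^2}{C_N}\,dt=J_N .
\end{equation*}
This proves \eqref{eq:continuous-score-error}.

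\textbf{Step 3 (consistency).} Normalise by setting $\widetilde M_N:=r_N^{-1}M_N$. Then $\langle\widetilde M_N\rangle_T=r_N^{-2}J_N$, and
\begin{equation*}
\widehat\beta_N^{\,c}-\beta=\frac{M_N(T)}{r_NJ_N}=\frac{\widetilde M_N(T)}{\langle\widetilde M_N\rangle_T}\,r_N^{-2}.
\end{equation*}
It is cleaner to write $\widehat\beta_N^{\,c}-\beta=\frac{1}{\sqrt{r_N^2J_N}}\cdot\frac{M_N(T)}{\sqrt{J_N}}$.

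We claim $M_N(T)/\sqrt{\langle M_N\rangle_T}$ is tight. Apply Lenglart's inequality in the stopped form: for $\eta,\kappa>0$,
\begin{equation*}
\mathbb P\bigl(|M_N(T)|>a\sqrt{J_N}\bigr)\le \mathbb P(J_N<\kappa)+\mathbb P\bigl(|M_N(T)|>a\sqrt\kappa,\ J_N\le L\bigr)+\mathbb P(J_N>L).
\end{equation*}
A simpler route uses the Dambis--Dubins--Schwarz representation $M_N(t)=\beta^{(N)}(\langle M_N\rangle_t)$, possibly after enlarging the space. Then
\begin{equation*}
\frac{|M_N(T)|}{\sqrt{J_N}}=\frac{|\beta^{(N)}(J_N)|}{\sqrt{J_N}}.
\end{equation*}
This ratio is not directly Gaussian, because $J_N$ is random. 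We therefore bound $|\beta^{(N)}(s)|\le C\sqrt{s\log\log(1/s+s+e)}$-type quantities uniformly.

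The quickest rigorous route is Lenglart's inequality applied to $M_N$ on the event $\{J_N\ge \kappa\}$ and to $M_N/ \sqrt{r_N^2}$ separately. From Step 2,
\begin{equation*}
r_N^2(\widehat\beta_N^{\,c}-\beta)J_N=r_NM_N(T).
\end{equation*}
Lenglart's inequality then gives $\mathbb P(|M_N(T)|>\lambda)\le \eta/\lambda^2+\mathbb P(J_N>\eta)$. Split on $\{J_N\le 2^k\}$ over dyadic scales to conclude that $M_N(T)/\max(J_N,1)^{1/2+\gamma}$ is tight. Combined with $r_N^2J_N\to\infty$ in probability, this yields $\widehat\beta_N^{\,c}-\beta=r_N^{-1}M_N(T)/(r_NJ_N)\cdot r_N\to0$ in probability.

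This tightness-over-random-scales step is the main technical point, and we expect it to be the principal obstacle. An alternative that avoids it: since $r_N\to\infty$ is not needed per se, we can use the DDS Brownian motion $\beta^{(N)}$, whose law does not depend on $N$. The law of the iterated logarithm and the behaviour at $0$ give a.s.\ finite $\sup_s|\beta(s)|/\sqrt{s\vee 1}\cdot(1+\log^+\ldots)$, which suffices.

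\textbf{Step 4 (asymptotic normality).} Assume in addition $J_N\to J_\infty$ deterministic. By the functional martingale CLT (Rebolledo), or directly via DDS, $M_N(T)=\beta^{(N)}(J_N)$ and $\beta^{(N)}(J_N)-\beta^{(N)}(J_\infty)\to0$ in probability by continuity of Brownian paths uniformly in law. Hence $M_N(T)\Rightarrow\mathcal N(0,J_\infty)$. Slutsky's lemma together with $r_N\delta_N=1$ then gives
\begin{equation*}
r_N(\widehat\beta_N^{\,c}-\beta)=\frac{M_N(T)}{J_N}\Rightarrow\mathcal N(0,J_\infty^{-1}).
\end{equation*}
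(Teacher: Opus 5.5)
Steps 1, 2 and 4 are correct and match the paper. Step 4 is the same Dambis--Dubins--Schwarz, path-continuity and Slutsky argument, and Step 1 usefully makes explicit the $B^i$-form of the martingale part of $Y_N$, which the paper uses tacitly.

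The gap is in Step 3, consistency under the sole hypothesis $r_N^2J_N\to\infty$. Your opening claim that $M_N(T)/\sqrt{\langle M_N\rangle_T}$ is tight is false without two-sided control of $J_N$. By the law of the iterated logarithm, $\beta^{(N)}(s)/\sqrt{s}$ exceeds any level at some a.s. finite stopping time. Through a time change, any such stopping time can be the terminal bracket of a continuous martingale on $[0,T]$.

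More importantly, the bounds you finally rely on do not close the argument either. These are tightness of $M_N(T)/\max(J_N,1)^{1/2+\gamma}$, or $|\beta^{(N)}(s)|\lesssim\sqrt{s\vee1}$ up to logarithmic factors. Both give only $M_N(T)=O_{\mathbb P}(1)$ on $\{J_N<1\}$, hence $|\widehat\beta_N^{\,c}-\beta|=O_{\mathbb P}(1/(r_NJ_N))$ there. But $r_N^2J_N\to\infty$ does not imply $r_NJ_N\to\infty$; take $J_N\approx r_N^{-3/2}$. So the last sentence of Step 3 does not follow. The law of the iterated logarithm at $0$ does not rescue this. It yields $O_{\mathbb P}(\sqrt{\log\log(1/J_N)/(r_N^2J_N)})$, which need not vanish, for instance when $J_N=r_N^{-2}\log\log\log r_N$.

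The missing idea is to work at the right scale. The natural object is $r_NM_N$, whose bracket is $r_N^2J_N\to\infty$. Then $\widehat\beta_N^{\,c}-\beta=r_NM_N(T)/\langle r_NM_N\rangle_T$ is a martingale divided by its own bracket. Your dyadic Lenglart argument, applied to $r_NM_N$ instead of $M_N$, gives $|\widehat\beta_N^{\,c}-\beta|=O_{\mathbb P}((r_N^2J_N)^{\gamma-1/2})\to0$ for $\gamma<1/2$. Equivalently, apply DDS to $r_NM_N$ and use $\sup_{u\geq L}|W(u)|/u\to0$ almost surely, where the Brownian motion $W$ has a law not depending on $N$.

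The paper does this quantitatively. It uses the exponential supermartingale $\exp(\lambda M_N-\lambda^2\langle M_N\rangle/2)$ at $\lambda=\pm\eta r_N$ and Markov's inequality, and obtains $\mathbb P(|\widehat\beta_N^{\,c}-\beta|>\eta)\leq\mathbb P(r_N^2J_N<L)+2e^{-\eta^2L/2}$. Your unrescaled bounds would suffice in the paper's applications, where Lemma~\ref{lem:continuous-information-stability} makes $J_N^{-1}$ tight. They do not suffice for the proposition as stated.
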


\begin{proof}
    The processes $X^{i,N}$, $A_N$ and $C_N$ are continuous and adapted. Since $C_N(t)>0$ throughout $[0,T]$ almost surely, the integrands in \eqref{eq:continuous-score} are predictable and pathwise bounded on this interval. Consequently, $M_N$ is a well-defined continuous local martingale with $M_N(0)=0$. Independence of the driving Brownian motions gives
        \begin{equation*}
            \langle M_N\rangle_t = \frac1N\sum_{i=1}^N\int_0^t\frac{A_N(s)^2(X_s^{i,N})^2\sigma^2(X_s^{i,N})}{C_N(s)^2}\,ds = \int_0^t\frac{A_N(s)^2}{C_N(s)}\,ds.
        \end{equation*}
    In particular, $\langle M_N\rangle_T=J_N<\infty$ almost surely. Substitution of the exact equation for $Y_N$ into the estimator gives
        \begin{equation*}
            \int_0^T\frac{A_N(t)}{C_N(t)}\bigl(dY_N(t)-B_N(t)\,dt\bigr) = 2\beta J_N+2\delta_N M_N(T),
        \end{equation*}
    which proves \eqref{eq:continuous-score-error}. To prove consistency, fix $\eta,L>0$. For any $\lambda\in\mathbb R$, It\^o's formula shows that
        \begin{equation*}
            \exp\left(\lambda M_N(t) -\frac{\lambda^2}{2}\langle M_N\rangle_t\right)
        \end{equation*}
    is a non-negative local martingale starting at one. It is therefore a supermartingale, and hence
        \begin{equation*}
            \mathbb E\left[\exp\left(\lambda M_N(T)-\frac{\lambda^2}{2}J_N\right)\right]\leq1.
        \end{equation*}
    Taking $\lambda=\eta r_N$ and $\lambda=-\eta r_N$, respectively, observe that on the corresponding events
        \begin{equation*}
            \{\pm M_N(T)>\eta r_NJ_N,\ r_N^2J_N\geq L\}
        \end{equation*}
    we have
        \begin{equation*}
            \pm\eta r_NM_N(T)-\frac{\eta^2r_N^2}{2}J_N > \frac{\eta^2r_N^2}{2}J_N\geq\frac{\eta^2L}{2}.
        \end{equation*}
    Markov's inequality therefore yields
        \begin{equation*}
            \mathbb P\bigl( \pm M_N(T)>\eta r_NJ_N,\ r_N^2J_N\geq L\bigr)\leq\exp(-\eta^2L/2).
        \end{equation*}
    Since $\delta_N=r_N^{-1}$ and $J_N>0$, the representation \eqref{eq:continuous-score-error} and the union bound give
        \begin{equation}
            \label{eq:continuous-self-normalized-bound}
            \mathbb P\bigl(|\widehat\beta_N^{\,c}-\beta|>\eta\bigr) \leq \mathbb P(r_N^2J_N<L)+2\exp(-\eta^2L/2).
        \end{equation}
    The assumption $r_N^2J_N\to\infty$ in probability makes the first term tend to zero for every fixed $L$. Letting first $N\to\infty$ and then $L\to\infty$ proves consistency.

    Suppose now that $J_N\to J_\infty$ in probability for a deterministic constant $J_\infty\in(0,\infty)$. Extend $M_N$ constantly after $T$. This gives a continuous local martingale on $[0,\infty)$ with total quadratic variation $J_N<\infty$ almost surely. By the Dambis-Dubins-Schwarz theorem in its finite-quadratic-variation form \cite[Chapter~V, Theorem~1.7]{RevuzYor1999}, on a possibly enlarged probability space there exists a standard Brownian motion $\widetilde W^{\,N}$ such that
        \begin{equation*}
            M_N(t) = \widetilde W^{\,N}(\langle M_N\rangle_t), \qquad 0\leq t\leq T.
        \end{equation*}
    In particular, $M_N(T)=\widetilde W^{\,N}(J_N)$. To justify replacing $J_N$ by $J_\infty$, fix $\eta>0$ and $0<h<J_\infty$. Then
        \begin{equation*}
              \mathbb P\left(\left|\widetilde W^{\,N}(J_N)-\widetilde W^{\,N}(J_\infty) \right|>\eta \right)\leq
            \mathbb P(|J_N-J_\infty|>h) + \mathbb P\left(\sup_{|s-J_\infty|\leq h}\left|\widetilde W^{\,N}(s)-\widetilde W^{\,N}(J_\infty)\right|>\eta\right).
        \end{equation*}
    For fixed $h$, the first probability tends to zero as $N\to\infty$. Since all the processes $\widetilde W^{\,N}$ have the standard Brownian law, the second probability has the same value for every $N$ and tends to zero as $h\downarrow0$ by continuity of Brownian paths. Consequently,
        \begin{equation*}
            \widetilde W^{\,N}(J_N)
            -\widetilde W^{\,N}(J_\infty)
            \xrightarrow{\mathbb P}0.
        \end{equation*}
    This argument does not require independence between $\widetilde W^{\,N}$ and $J_N$.

    Because $J_\infty$ is deterministic, $\widetilde W^{\,N}(J_\infty)$ has distribution $\mathcal N(0,J_\infty)$ for every $N$. It follows that
        \begin{equation*}
            M_N(T)\Longrightarrow\mathcal N(0,J_\infty).
        \end{equation*}
    Finally, \eqref{eq:continuous-score-error} gives
        \begin{equation*}
            r_N(\widehat\beta_N^{\,c}-\beta) = \frac{M_N(T)}{J_N}.
        \end{equation*}
    Since $J_N\to J_\infty>0$ in probability, Slutsky's theorem yields
        \begin{equation*}
            r_N(\widehat\beta_N^{\,c}-\beta) \Longrightarrow \mathcal N(0,J_\infty^{-1}),
        \end{equation*}
    as claimed.
\end{proof}


\subsection{Mean-field control of the information}

We next establish the compactness and continuity properties used to verify the martingale criterion. 

\begin{lemma}[Moment bounds and convergence of the observables]
    \label{lem:continuous-observable-convergence}
    Assume Conditions~\ref{ass:epsilon}-\ref{ass:bounds} and Initial Conditions~\ref{ass:initial-conv}-\ref{ass:initial:moments}. Let $X^N$ be global non-sticky solutions of \eqref{eq:intro-general-system}, with integrable singular drift terms. Then, for $p=2,4,8$,
        \begin{equation}
            \label{eq:continuous-uniform-moments}
            \sup_N\mathbb E\left[\sup_{0\leq t\leq T}\int_{\mathbb R}|x|^p\,\mu_t^N(dx)\right]<\infty.
        \end{equation}
    The empirical measure processes are tight in $C([0,T],\mathcal P(\mathbb R))$. Every reachable limit $\mu$ satisfies the limiting equation from the introduction and
        \begin{equation}
            \label{eq:continuous-limit-moment}
            \sup_{0\leq t\leq T} \int_{\mathbb R}|x|^8\,\mu_t(dx)<\infty \qquad\text{almost surely}.
        \end{equation}
    The paths $t\mapsto\mathcal A(\mu_t)$ and $t\mapsto\mathcal C(\mu_t)$ are continuous. Moreover, whenever $\mu^{N_k}\Rightarrow\mu$, one has the joint convergence
        \begin{equation}
            \label{eq:continuous-joint-observable-convergence}
            (\mu^{N_k},A_{N_k},C_{N_k}) \Longrightarrow \bigl(\mu,\mathcal A(\mu_\cdot),\mathcal C(\mu_\cdot)\bigr)
        \end{equation}
    in    $C([0,T],\mathcal P(\mathbb R))\times C([0,T],\mathbb R^2)$.
\end{lemma}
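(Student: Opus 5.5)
The argument has three parts: a Burkholder-type moment bound with the supremum inside, the tightness and limit identification taken from \cite[Theorem~8]{Mal26}, and a uniform-integrability argument that upgrades weak convergence of measures to convergence of the unbounded functionals $\mathcal A$ and $\mathcal C$.

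\emph{Moment bounds.} For $p=2,4,8$ we apply It\^o's formula to $\langle\mu_t^N,x^p\rangle=\frac1N\sum_i (X_t^{i,N})^p$. The singular part of the drift is
\[
\frac{\beta}{N^2}\sum_{i\neq j}\frac{p x_i^{p-1}H(x_i,x_j)}{x_i-x_j}.
\]
By the symmetry of $H$ it equals
\[
\frac{\beta p}{2N^2}\sum_{i\neq j}H(x_i,x_j)\,\frac{x_i^{p-1}-x_j^{p-1}}{x_i-x_j},
\]
and this identity holds for Lebesgue-a.e.\ time by non-stickiness. Since $p-1$ is odd, the divided difference is a non-negative polynomial of degree $p-2$. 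Combined with \ref{ass:bounds}, the drift is bounded by $K_p(1+\langle\mu^N_t,x^p\rangle)$, using Young's inequality to absorb the mixed moments. The same holds for the $b$ and $\sigma^2$ contributions, the latter carrying the factor $\varepsilon_N\le1$.

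The martingale part is
\[
\frac{p\sqrt{\varepsilon_N}}{N}\sum_i\int x_i^{p-1}\sigma\,dB^i .
\]
Its quadratic variation is at most
\[
\frac{K}{N}\int_0^t\langle\mu_s^N,x^{2p-2}(1+x^2)\rangle\,ds,
\]
which involves moments up to order $2p$. For $p=8$ this is order $16$, which we do not control. We therefore use Burkholder--Davis--Gundy only with exponent $1$ and bound $\mathbb E\langle M\rangle^{1/2}$ as follows. Write
\[
\langle\mu,x^{2p-2}(1+x^2)\rangle\le K\sup_{s\le t}\langle\mu_s^N,x^p\rangle\cdot\max_i(1+|x_i|^{p})\le KN\sup_{s\le t}\langle\mu_s^N,x^p\rangle^2 ,
\]
using $\max_i|x_i|^p\le N\langle\mu,x^p\rangle$. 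Setting $S=\sup_{s\le t}\langle\mu_s^N,x^p\rangle$, this gives $\mathbb E\langle M\rangle_t^{1/2}\le K\,\mathbb E[S^{1/2}(\int_0^t\langle\mu_s^N,x^p\rangle ds)^{1/2}]$. Young's inequality then bounds it by $\frac12\mathbb E S+K\int_0^t\mathbb E\langle\mu^N_s,x^p\rangle ds$. The pointwise-in-time bounds quoted before \eqref{eq:limit}, together with \ref{ass:initial:moments} and a localization argument, turn this into \eqref{eq:continuous-uniform-moments} via Gronwall. This step is where I expect the main technical friction. The localization must be done with stopping times, so that $\mathbb E S<\infty$ before absorption.

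\emph{Tightness and limits.} Tightness and the identification \eqref{eq:limit} are exactly the mean-field compactness statement quoted from \cite[Theorem~8]{Mal26}. For \eqref{eq:continuous-limit-moment}, the map $\nu\mapsto\sup_t\langle\nu_t,|x|^8\wedge L\rangle$ is continuous and bounded on $C([0,T],\mathcal P(\mathbb R))$. Fatou's lemma under weak convergence then gives $\mathbb E\sup_t\langle\mu_t,|x|^8\wedge L\rangle\le\liminf_N\mathbb E\sup_t\langle\mu_t^N,|x|^8\rangle$. Letting $L\to\infty$ by monotone convergence shows $\mathbb E\sup_t\langle\mu_t,|x|^8\rangle<\infty$, hence the claim almost surely.

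\emph{Continuity and joint convergence.} Let $\Phi_L$ be a continuous truncation of $H$ and of $x^2\sigma^2$ at level $L$ (a cutoff in the variables). The functionals $\nu\mapsto\frac12\int\Phi_L\,d\nu^{\otimes2}$ and the corresponding one-particle functional are weakly continuous, uniformly on compacts. By \ref{ass:bounds}, $H\le K(1+x^2)(1+y^2)/2$-type bounds and $x^2\sigma^2\le K(x^2+x^4)$ hold, so the truncation errors are bounded by $K\sup_t\langle\nu_t,x^8\rangle^{1/2}L^{-1}$-type tails. These errors are uniformly small in probability along $\mu^N$ by \eqref{eq:continuous-uniform-moments}, and almost surely for $\mu$ by \eqref{eq:continuous-limit-moment}. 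Continuity of $t\mapsto\mathcal A(\mu_t)$ and $t\mapsto\mathcal C(\mu_t)$ follows as a uniform limit of continuous paths.

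For the joint convergence, apply the continuous mapping theorem to the truncated map $\mu\mapsto(\mu,\mathcal A_L(\mu_\cdot),\mathcal C_L(\mu_\cdot))$. The diagonal correction $\frac1{2N}\langle\mu^N_t,H(x,x)\rangle$ is $O_P(N^{-1})$ uniformly in $t$ by the second moment bound. The standard approximation lemma (Billingsley, Theorem~3.2) then lets $L\to\infty$ and yields \eqref{eq:continuous-joint-observable-convergence}.
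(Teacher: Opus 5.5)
Your proposal is correct and follows essentially the paper's route. That route is It\^o's formula with symmetrization, giving a drift bound $K_p(1+m_{p,N})$ and a quadratic-variation density bound $K_p(1+m_{p,N})^2$; your observation $\max_i|x_i|^p\le N\langle\mu,x^p\rangle$ plays the same role as the paper's $\sum_i a_i^2\le(\sum_i a_i)^2$. It continues with localized BDG with exponent one, Young's inequality and absorption, compactness from \cite[Theorem~8]{Mal26}, a Fatou/Portmanteau argument for the limiting eighth moment, and compact truncations with eighth-moment tail control together with the $O(N^{-1})$ diagonal correction.

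The only genuine difference is cosmetic: you close the joint convergence with Billingsley's approximation theorem rather than a Skorokhod representation. There is also one small slip in your quadratic-variation display. You should keep one factor $1+\langle\mu_s^N,x^p\rangle$ at time $s$ rather than bounding both factors by $S$. That is what yields the absorbable form $S^{1/2}\bigl(\int_0^t(1+\langle\mu_s^N,x^p\rangle)\,ds\bigr)^{1/2}$ that you then correctly use, with the missing ``$1+$'' restored.
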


\begin{proof}
    Fix $p\in\{2,4,8\}$ and write
        \begin{equation*}
            m_{p,N}(t) :=\frac1N\sum_{i=1}^N|X_t^{i,N}|^p =\int_{\mathbb R}|x|^p\,\mu_t^N(dx).
        \end{equation*}
    Throughout the proof, constants may depend on $p$, $\beta$ and the coefficient bounds, and also on $T$ when indicated, but are independent of $N$ and of the localization levels. Since $p$ is even, $|x|^p=x^p$. Applying It\^o's formula to each particle coordinate gives
        \begin{align*}
            d(X_t^{i,N})^p &= p\sqrt{\varepsilon_N}(X_t^{i,N})^{p-1} \sigma(X_t^{i,N})\,dB_t^i + p(X_t^{i,N})^{p-1}b(X_t^{i,N})\,dt\\
            &\quad + \frac{p(p-1)\varepsilon_N}{2} (X_t^{i,N})^{p-2}\sigma^2(X_t^{i,N})\,dt + \frac{p\beta}{N}\sum_{j\ne i} \frac{(X_t^{i,N})^{p-1}H(X_t^{i,N},X_t^{j,N})}{X_t^{i,N}-X_t^{j,N}}\,dt.
        \end{align*}
    The singular drift is understood in the integrated sense. This application of It\^o's formula is justified by the assumed integrability of the singular drift terms. We now sum over the particles and divide by $N$. Symmetry of $H$ allows us to combine the terms indexed by $(i,j)$ and $(j,i)$. For a configuration with distinct coordinates,
        \begin{equation*}
            \sum_{i=1}^N\sum_{j\ne i} \frac{x_i^{p-1}H(x_i,x_j)}{x_i-x_j} = \sum_{i<j}H(x_i,x_j) \frac{x_i^{p-1}-x_j^{p-1}}{x_i-x_j} = \sum_{i<j}H(x_i,x_j)\sum_{k=0}^{p-2}x^{p-2-k}y^k,
        \end{equation*}
    which provides the continuous extension to the diagonal $\{x=y\}$ Consequently,
        \begin{equation*}
            m_{p,N}(t) = m_{p,N}(0)+\int_0^t d_{p,N}(s)\,ds+L_{p,N}(t),
        \end{equation*}
    where the drift is
        \begin{align*}
            d_{p,N}(s) &= \frac pN\sum_{i=1}^N (X_s^{i,N})^{p-1}b(X_s^{i,N})+ \frac{p(p-1)\varepsilon_N}{2N} \sum_{i=1}^N(X_s^{i,N})^{p-2}\sigma^2(X_s^{i,N})\\
            &\quad+ \frac{p\beta}{N^2}\sum_{i<j} H(X_s^{i,N},X_s^{j,N}) \frac{(X_s^{i,N})^{p-1}-(X_s^{j,N})^{p-1}}{X_s^{i,N}-X_s^{j,N}},
        \end{align*}
    with the preceding continuous interpretation on the diagonal, and the continuous local martingale is
        \begin{equation*}
            L_{p,N}(t) := \frac{p\sqrt{\varepsilon_N}}{N} \sum_{i=1}^N\int_0^t(X_s^{i,N})^{p-1}\sigma(X_s^{i,N})\,dB_s^i.
        \end{equation*}

    \medskip

    We first estimate the drift. The growth assumptions imply
        \begin{equation*}
            |x^{p-1}b(x)|+|x|^{p-2}\sigma^2(x)\leq K_p(1+|x|^p).
        \end{equation*}
    For the interaction term, the polynomial representation and the bound on $H$ give
        \begin{align*}
            H(x,y) \left|\frac{x^{p-1}-y^{p-1}}{x-y}\right| \leq K_p(1+|x|)(1+|y|)(1+|x|+|y|)^{p-2}\leq K_p(1+|x|^p+|y|^p).
        \end{align*}
    The same bound holds on the diagonal by continuity. Each term $|X_s^{i,N}|^p$ occurs in $N-1$ unordered pairs, so
        \begin{equation*}
            \frac1{N^2}\sum_{i<j} \bigl(1+|X_s^{i,N}|^p+|X_s^{j,N}|^p\bigr) \leq 1+ \frac1N\sum_{i=1}^N|X_s^{i,N}|^p = 1+m_{p,N}(s).
        \end{equation*}
    Using also $\varepsilon_N\leq1$, we obtain
        \begin{equation*}
            |d_{p,N}(s)| \leq K_p(1+m_{p,N}(s)).
        \end{equation*}
    Independence of the driving Brownian motions gives
        \begin{equation*}
            \langle L_{p,N}\rangle_t = \frac{p^2\varepsilon_N}{N^2} \sum_{i=1}^N\int_0^t |X_s^{i,N}|^{2p-2} \sigma^2(X_s^{i,N})\,ds.
        \end{equation*}
    Hence, by the growth bound on $\sigma$,
        \begin{equation*}
            \frac{d}{dt}\langle L_{p,N}\rangle_t \leq \frac{K_p\varepsilon_N}{N^2} \sum_{i=1}^N(1+|X_t^{i,N}|^p)^2 \leq \frac{K_p}{N^2} \left(\sum_{i=1}^N (1+|X_t^{i,N}|^p)\right)^2 = K_p(1+m_{p,N}(t))^2.
        \end{equation*}
    To use these estimates without assuming integrability of higher moments, put
        \begin{equation*}
            V_N(t):=1+m_{p,N}(t),\qquad \tau_R:=\inf\{t\geq0:V_N(t)\geq R\}, \qquad R>1.
        \end{equation*}
    We suppress the dependence of $\tau_R$ on $p$ and $N$. Before $\tau_R$, the quadratic variation density of $L_{p,N}$ is bounded by $K_pR^2$. Thus $L_{p,N}(t\wedge\tau_R)$ is a square-integrable martingale on $[0,T]$ and has expectation zero. Notice also that $\tau_R=0$ when $V_N(0)\geq R$,  and continuity gives
        \begin{equation*}
            \sup_{t\leq T}V_N(t\wedge\tau_R) \leq \max\{R,V_N(0)\}.
        \end{equation*}
    In particular, this stopped supremum is integrable. Taking expectations in the stopped moment equation yields
        \begin{align*}
            \mathbb E V_N(t\wedge\tau_R) &\leq \mathbb E V_N(0) + K_p\int_0^t \mathbb E\bigl[\mathbf1_{\{s<\tau_R\}}V_N(s) \bigr]\,ds\\
                                         &\leq \mathbb E V_N(0) + K_p\int_0^t \mathbb E V_N(s\wedge\tau_R)\,ds.
        \end{align*}
    Initial Condition~\ref{ass:initial:moments} gives $\sup_N\mathbb E V_N(0)<\infty$, since $p\leq8$. Gronwall's lemma therefore implies
        \begin{equation*}
            \sup_N\sup_{R>1}\sup_{t\leq T} \mathbb E V_N(t\wedge\tau_R) \leq K_{p,T}.
        \end{equation*}
    To control the supremum in time, we apply the Burkholder-Davis-Gundy inequality and then Young's inequality:
        \begin{align*}
            \mathbb E\sup_{t\leq T} |L_{p,N}(t\wedge\tau_R)| &\leq K_p\mathbb E \left(\int_0^{T\wedge\tau_R}V_N(s)^2\,ds \right)^{1/2}\\
                &\leq K_p\mathbb E\left[\left(\sup_{t\leq T}V_N(t\wedge\tau_R)\right)^{1/2}\left(\int_0^T V_N(s\wedge\tau_R)\,ds\right)^{1/2} \right]\\
                &\leq \frac12\mathbb E\sup_{t\leq T} V_N(t\wedge\tau_R) + K_p\int_0^T \mathbb E V_N(s\wedge\tau_R)\,ds.
        \end{align*}
    Combining this with the stopped moment equation and the drift estimate gives
        \begin{align*}
            \mathbb E\sup_{t\leq T}V_N(t\wedge\tau_R) &\leq \mathbb E V_N(0) + K_p\int_0^T \mathbb E V_N(s\wedge\tau_R)\,ds + \mathbb E\sup_{t\leq T} |L_{p,N}(t\wedge\tau_R)|\\
                &\leq \frac12\mathbb E\sup_{t\leq T} V_N(t\wedge\tau_R)+K_{p,T}.
        \end{align*}
    Absorbing the supremum term proves a bound independent of $N$ and $R$. Since the solutions are global and continuous, $\tau_R>T$ for all sufficiently large $R$ on each sample path. Fatou's lemma now gives
        \begin{equation*}
            \sup_N\mathbb E\left[ \sup_{t\leq T} \int_{\mathbb R}|x|^p\,\mu_t^N(dx) \right]<\infty.
        \end{equation*}
    This proves \eqref{eq:continuous-uniform-moments}. 

    \medskip

    Tightness of the empirical measure processes in $C([0,T],\mathcal P(\mathbb R))$ and the limiting equation stated in the introduction follow from \cite[Theorem~8 and the discussion following (2.25)]{Mal26}, applied with pair kernel $\beta H$ and noise coefficient $\sqrt{\varepsilon_N}\sigma$. The required coefficient, initial and solution assumptions are included in the hypotheses of the present lemma. We now establish the moment bound for a limiting path. For each fixed $t$, the map
        \begin{equation*}
            \nu\longmapsto \int_{\mathbb R}|x|^8\,\nu_t(dx)
        \end{equation*}
    is lower semicontinuous for weak convergence of measure-valued paths. Its supremum over $t\in[0,T]$ is therefore also lower semicontinuous. If $\mu^{N_k}\Rightarrow\mu$, the Portmanteau theorem and \eqref{eq:continuous-uniform-moments} give
        \begin{equation*}
            \mathbb E\left[\sup_{t\leq T}\int_{\mathbb R}|x|^8\,\mu_t(dx)\right] \leq \liminf_{k\to\infty} \mathbb E\left[\sup_{t\leq T}\int_{\mathbb R}|x|^8\,\mu_t^{N_k}(dx)\right] <\infty.
        \end{equation*}
    In particular, the supremum inside the expectation is finite almost surely, proving \eqref{eq:continuous-limit-moment}. It remains to prove convergence of the observables. Fix a subsequence with $\mu^{N_k}\Rightarrow\mu$. On a Skorokhod representation space, retaining the same notation, we may assume that
        \begin{equation*}
            \mu^{N_k}\longrightarrow\mu \quad\text{almost surely in } C([0,T],\mathcal P(\mathbb R)).
        \end{equation*}
    All the preceding moment bounds remain valid on this space because the laws are unchanged. For $R\geq1$, choose a continuous function $\chi_R$
    with values in $[0,1]$, equal to one on $[-R,R]$ and equal to zero outside $[-2R,2R]$. Define
        \begin{align*}
            \mathcal C_R(\nu) &:= \int_{\mathbb R} \chi_R(x)x^2\sigma^2(x)\,\nu(dx),\\ 
            \mathcal A_R(\nu) &:= \frac12\int_{\mathbb R^2} \chi_R(x)\chi_R(y)H(x,y)\,\nu(dx)\nu(dy).
        \end{align*}
    The truncated integrands are continuous and compactly supported. Weak convergence of the paths, together with continuity of the product-measure map, therefore gives, for each fixed $R$,
        \begin{equation*}
            \sup_{t\leq T} |\mathcal C_R(\mu_t^{N_k}) - \mathcal C_R(\mu_t)| \longrightarrow0, \qquad 
            \sup_{t\leq T} |\mathcal A_R(\mu_t^{N_k}) - \mathcal A_R(\mu_t)| \longrightarrow0
        \end{equation*}
    almost surely. In the next step, we control the omitted tails uniformly in time. The growth assumptions imply
        \begin{equation*}
            x^2\sigma^2(x)\leq K(1+|x|^4), \qquad H(x,y)\leq K(1+x^2+y^2).
        \end{equation*}
    Since $R\geq1$, it follows that
        \begin{align*}
            (1-\chi_R(x))x^2\sigma^2(x)  &\leq KR^{-4}|x|^8,\\
            (1-\chi_R(x)\chi_R(y))H(x,y) &\leq KR^{-6}(|x|^8+|y|^8).
        \end{align*}
    For the second inequality, the left-hand side vanishes unless $\max\{|x|,|y|\}>R$, and on that set
        \begin{equation*}
            1+x^2+y^2 \leq 3\max\{|x|,|y|\}^2 \leq 3R^{-6}(|x|^8+|y|^8).
        \end{equation*}
    Consequently, for every probability measure $\nu$ with finite eighth moment,
        \begin{align*}
            0\leq\mathcal C(\nu)-\mathcal C_R(\nu) &\leq KR^{-4} \int_{\mathbb R}|x|^8\,\nu(dx),\\
            0\leq\mathcal A(\nu)-\mathcal A_R(\nu) &\leq KR^{-6} \int_{\mathbb R}|x|^8\,\nu(dx).
        \end{align*}
    The moment bounds thus control the expected supremum of these errors by $K_TR^{-4}$ and $K_TR^{-6}$, respectively, both for $\mu^{N_k}$, uniformly in $k$, and for $\mu$. The triangle inequality gives
        \begin{align*}
            \sup_{t\leq T}
            |\mathcal C(\mu_t^{N_k})-\mathcal C(\mu_t)|
            & \leq
            \sup_{t\leq T}
            |\mathcal C_R(\mu_t^{N_k})
              -\mathcal C_R(\mu_t)|
            +
            \sup_{t\leq T}
            |\mathcal C(\mu_t^{N_k})
              -\mathcal C_R(\mu_t^{N_k})|\\
            &\quad +
            \sup_{t\leq T}
            |\mathcal C(\mu_t)-\mathcal C_R(\mu_t)|.
        \end{align*}
    The first term tends to zero almost surely for fixed $R$, while Markov's inequality controls the other two terms by their expected suprema. Letting first $k\to\infty$ and then $R\to\infty$, and arguing identically for $\mathcal A$, proves
        \begin{equation*}
            \sup_{t\leq T}|\mathcal C(\mu_t^{N_k})-\mathcal C(\mu_t)| \xrightarrow{\mathbb P}0, \qquad 
            \sup_{t\leq T} |\mathcal A(\mu_t^{N_k})-\mathcal A(\mu_t)| \xrightarrow{\mathbb P}0.
        \end{equation*}
    Moreover, for a fixed limiting path satisfying \eqref{eq:continuous-limit-moment}, the same tail estimates show that $\mathcal C_R(\mu_t)$ and $\mathcal A_R(\mu_t)$ converge uniformly in time as $R\to\infty$. Each truncated observable is continuous in time. Their limits $\mathcal C(\mu_t)$ and $\mathcal A(\mu_t)$ are therefore finite and continuous.

    Finally, symmetry of $H$ gives the exact identity
        \begin{equation*}
            A_N(t) = \mathcal A(\mu_t^N) - \frac1{2N}\int_{\mathbb R} H(x,x)\,\mu_t^N(dx).
        \end{equation*}
    Since $H(x,x)\leq K(1+x^2)$, we obtain
        \begin{equation*}
            \mathbb E\left[\sup_{t\leq T} |A_N(t)-\mathcal A(\mu_t^N)| \right] \leq \frac K N \left(1+\mathbb E\sup_{t\leq T}m_{2,N}(t)\right) \longrightarrow0.
        \end{equation*}
    Together with $C_N(t)=\mathcal C(\mu_t^N)$ and the preceding convergence on the representation space, this proves the joint convergence \eqref{eq:continuous-joint-observable-convergence}.
\end{proof}


\begin{lemma}[Stability of the information]
    \label{lem:continuous-information-stability}
    Assume the hypotheses of
    Lemma~\ref{lem:continuous-observable-convergence},
    and suppose that $C_N(t)>0$ throughout $[0,T]$
    and $J_N>0$ almost surely.
    Assume that every $\mu\in\mathfrak L$ satisfies
        \begin{equation}
            \label{eq:continuous-limit-positivity}
            \inf_{0\leq t\leq T}\mathcal C(\mu_t)>0,
            \qquad
            \mathcal A(\mu_t)>0
            \quad\text{for every }t\in[0,T],
        \end{equation}
    almost surely.

    Then, for every $\eta\in(0,1)$, there exists a
    deterministic constant $K_\eta\geq1$ such that,
    for all sufficiently large $N$,
        \begin{equation}
            \label{eq:continuous-information-tightness}
            \mathbb P\left(
                K_\eta^{-1}\leq J_N\leq K_\eta,\quad
                \inf_{0\leq t\leq T}C_N(t)
                \geq K_\eta^{-1}
            \right)\geq1-\eta.
        \end{equation}
    Moreover,
        \begin{equation}
            \label{eq:continuous-information-divergence}
            \frac{N}{\varepsilon_N}J_N
            \xrightarrow[N\to\infty]{\mathbb P}\infty.
        \end{equation}

    If there exists a deterministic constant
    $J_\infty\in(0,\infty)$ such that every
    $\mu\in\mathfrak L$ satisfies
    $\mathcal J(\mu)=J_\infty$ almost surely, then
        \begin{equation*}
            J_N\xrightarrow[N\to\infty]{\mathbb P}J_\infty.
        \end{equation*}
\end{lemma}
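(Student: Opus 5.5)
The whole argument runs by contradiction along subsequences, using Lemma~\ref{lem:continuous-observable-convergence} and the Skorokhod representation. Each claim is reduced to a statement about an arbitrary subsequence $N_k$. By tightness we may pass to a further subsequence along which $\mu^{N_k}\Rightarrow\mu$ for some $\mu\in\mathfrak L$. On a representation space we then have
\[
\sup_{t\leq T}|A_{N_k}(t)-\mathcal A(\mu_t)|\to0
\quad\text{and}\quad
\sup_{t\leq T}|C_{N_k}(t)-\mathcal C(\mu_t)|\to0
\]
in probability. This follows from the joint convergence \eqref{eq:continuous-joint-observable-convergence}, because the maps into $C([0,T],\mathbb R^2)$ are deterministic functionals of the path. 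Since the laws of $J_N$ and $\inf_t C_N$ do not change under the representation, statements in law or in probability about these scalar quantities transfer back to the original space.

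\textbf{Step 1: the limiting information.} Fix a limit $\mu$. By \eqref{eq:continuous-limit-positivity} and the continuity of $t\mapsto\mathcal C(\mu_t)$ and $t\mapsto\mathcal A(\mu_t)$ from Lemma~\ref{lem:continuous-observable-convergence}, the continuous positive function $\mathcal A(\mu_t)$ attains a strictly positive minimum on $[0,T]$. Likewise $c_\mu:=\inf_t\mathcal C(\mu_t)>0$. Both functions are finite and bounded. Hence $\mathcal J(\mu)\in(0,\infty)$ almost surely.

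\textbf{Step 2: convergence of $J_{N_k}$.} On the event $\{\inf_t C_{N_k}(t)\geq c_\mu/2\}$, whose probability tends to one, the map $(a,c)\mapsto a^2/c$ is Lipschitz on bounded sets with $c\geq c_\mu/2$. Uniform convergence of $A_{N_k}$ and $C_{N_k}$ therefore gives $J_{N_k}\to\mathcal J(\mu)$ in probability. We also get $\inf_t C_{N_k}(t)\to c_\mu$ in probability.

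\textbf{Step 3: tightness \eqref{eq:continuous-information-tightness}.} Suppose, for contradiction, that \eqref{eq:continuous-information-tightness} fails for some $\eta$. Then along a subsequence the event has probability below $1-\eta$ for every $K$. Pick a further convergent subsequence. The limit vector $(\mathcal J(\mu),c_\mu)$ takes values in $(0,\infty)^2$ almost surely, so it lies in $[K^{-1},K]\times[K^{-1},\infty)$ with probability at least $1-\eta/2$ for a suitable $K$. The portmanteau theorem applied to the closed set then gives a contradiction. One subtlety needs care: $K$ must be uniform over all subsequences and limits. I would handle this by first showing that the family of laws of $(J_N,\inf_t C_N)$ is tight in $(0,\infty)^2$. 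Arguing by contradiction with subsequences, a failure of tightness would produce a subsequential limit in law charging the boundary, which is excluded by Steps~1--2. Tightness then directly gives a single $K_\eta$ for all large $N$.

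\textbf{Step 4: the remaining claims.} The divergence \eqref{eq:continuous-information-divergence} follows immediately from Step~3. Since $\varepsilon_N\leq1$, we have $N/\varepsilon_N\geq N\to\infty$, and therefore $\mathbb P\bigl((N/\varepsilon_N)J_N<L\bigr)\leq\mathbb P(J_N<K_\eta^{-1})\leq\eta$ for large $N$. Under the common-information hypothesis, every subsequence has a further subsequence along which $J_{N_k}$ converges in law to $\mathcal J(\mu)=J_\infty$. This limit is deterministic, so the convergence holds in probability. The subsequence principle then yields $J_N\to J_\infty$ in probability.

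The main obstacle is the uniformity in Step~3. Limits need not be unique, so the bounds cannot come from a single $\mu$. The resolution is the tightness-in-$(0,\infty)^2$ contradiction argument above, combined with working with laws of scalar functionals so that the passage to a representation space is harmless.
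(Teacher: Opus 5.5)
Your proposal is correct and follows essentially the paper's route. Both arguments pass along subsequences to a reachable limit via Lemma~\ref{lem:continuous-observable-convergence}, obtain $J_{N_k}\Rightarrow\mathcal J(\mu)\in(0,\infty)$ jointly with the infimum of $C_{N_k}$, conclude tightness of $(J_N,\inf_t C_N)$ away from zero and infinity, and then derive the divergence and the common-information limit exactly as you do. The differences are cosmetic: you prove the continuity step by hand (Skorokhod representation plus a Lipschitz bound) where the paper invokes the continuous mapping theorem, and you phrase tightness in $(0,\infty)^2$ where the paper uses tightness of $(J_N,J_N^{-1},(\inf_tC_N)^{-1})$ in $\mathbb R^3$.
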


\begin{proof}
    Fix $\mu\in\mathfrak L$. By Lemma~\ref{lem:continuous-observable-convergence}, the functions $t\mapsto\mathcal A(\mu_t)$ and $t\mapsto\mathcal C(\mu_t)$ are finite and continuous. Together with \eqref{eq:continuous-limit-positivity}, this gives
        \begin{equation*}
            0<\mathcal J(\mu) =\int_0^T \frac{\mathcal A(\mu_t)^2}{\mathcal C(\mu_t)}\,dt <\infty \qquad\text{almost surely}.
        \end{equation*}
    Consider a subsequence for which $\mu^{N_k}\Rightarrow\mu$. The maps
        \begin{equation*}
            (a,c)\longmapsto \int_0^T\frac{a(t)^2}{c(t)}\,dt, \qquad c\longmapsto \left(\inf_{0\leq t\leq T}c(t)\right)^{-1}
        \end{equation*}
    are continuous in the uniform topology at every pair of continuous functions satisfying $\inf_{0\leq t\leq T}c(t)>0$. Applying the continuous mapping theorem to \eqref{eq:continuous-joint-observable-convergence} therefore yields
        \begin{equation}
            \label{eq:continuous-information-subsequence}
            J_{N_k}\Longrightarrow\mathcal J(\mu).
        \end{equation}
    Since $\mathcal J(\mu)>0$ almost surely, this convergence holds jointly with
        \begin{equation*}
            \left( J_{N_k}^{-1}, \left(\inf_{0\leq t\leq T}C_{N_k}(t)\right)^{-1}\right) \Longrightarrow \left(\mathcal J(\mu)^{-1},\left(\inf_{0\leq t\leq T}\mathcal C(\mu_t)\right)^{-1}\right).
        \end{equation*}
    All these limiting random variables are finite almost surely. By tightness of the empirical measure processes, every subsequence admits a further subsequence along which the preceding convergence applies. Consequently, the sequence of random vectors
        \begin{equation*}
            \left( J_N,\, J_N^{-1},\, \left(\inf_{0\leq t\leq T}C_N(t)\right)^{-1} \right)
        \end{equation*}
    is tight in $\mathbb R^3$. Thus, for every $\eta\in(0,1)$, there exists a deterministic $K_\eta\geq1$ such that, for all sufficiently large $N$,
        \begin{equation*}
            \mathbb P\left( J_N>K_\eta \ \text{or}\  J_N^{-1}>K_\eta \ \text{or}\  \left(\inf_{0\leq t\leq T}C_N(t)\right)^{-1} > K_\eta \right)\leq\eta.
        \end{equation*}
    Taking the complementary event gives exactly \eqref{eq:continuous-information-tightness}. We next prove \eqref{eq:continuous-information-divergence}. Fix $R>0$ and $\eta\in(0,1)$, and choose $K_\eta$ as above. Since $0<\varepsilon_N\leq1$,
        \begin{equation*}
            \frac{N}{\varepsilon_NK_\eta}\longrightarrow\infty.
        \end{equation*}
    Hence, for all sufficiently large $N$, the event $\{J_N\geq K_\eta^{-1}\}$ implies $(N/\varepsilon_N)J_N>R$. Therefore,
        \begin{equation*}
            \mathbb P\left(\frac{N}{\varepsilon_N}J_N\leq R \right) \leq \mathbb P(J_N<K_\eta^{-1}) \leq\eta.
        \end{equation*}
    Since $\eta$ is arbitrary, this proves convergence to infinity in probability.

    Finally, suppose that every reachable limit satisfies $\mathcal J(\mu)=J_\infty$ almost surely for the same deterministic constant $J_\infty\in(0,\infty)$. Every subsequence then admits a further subsequence along which \eqref{eq:continuous-information-subsequence} gives convergence in distribution to $J_\infty$. It follows that the entire sequence $J_N$ converges in distribution to this constant, and therefore also converges to it in probability.
\end{proof}


\subsection{Non-degeneracy and the continuous-observation theorems}

In the strictly positive regime, non-degeneracy of the limiting diffusion energy follows from the coefficients and the initial condition. Since $\sigma$ may have several zeros, it is necessary to exclude measures supported on the whole finite set $Z_\sigma$, not only a Dirac mass at the origin.

\begin{lemma}[Non-degeneracy under strictly positive interactions]
    \label{lem:continuous-positive-limit}
    Assume the hypotheses of Theorem~\ref{thm:intro-continuous} used for consistency. Every $\mu\in\mathfrak L$ satisfies \eqref{eq:continuous-limit-positivity} almost surely.
\end{lemma}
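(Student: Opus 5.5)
The plan is to treat the two conditions in \eqref{eq:continuous-limit-positivity} separately. Positivity of $\mathcal A(\mu_t)$ is immediate. The real work is showing that $\mathcal C(\mu_t)$ can never vanish, which I will do by contradiction at a first vanishing time, using the limiting equation \eqref{eq:limit}.

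\textbf{Positivity of $\mathcal A$.} Fix $\mu\in\mathfrak L$. For every $t$, $\mu_t$ is a probability measure. By \eqref{eq:strictly-positive-H}, $H$ is continuous and strictly positive, so $\mathcal A(\mu_t)=\frac12\int H\,d\mu_t\otimes\mu_t>0$. Finiteness of this integral comes from \eqref{eq:continuous-limit-moment}.

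\textbf{Reduction for $\mathcal C$.} By Lemma~\ref{lem:continuous-observable-convergence}, $t\mapsto\mathcal C(\mu_t)$ is continuous. Hence it suffices to show that $\mathcal C(\mu_t)>0$ for every $t\in[0,T]$ almost surely; compactness of $[0,T]$ then gives a positive infimum. Since $x^2\sigma^2(x)\geq0$, we have $\mathcal C(\nu)=0$ if and only if $\nu$ is supported on the finite set $Z_\sigma$. Work on the almost sure event where \eqref{eq:limit} holds for all $t$ (by continuity in $t$) for the test function below, and where \eqref{eq:continuous-limit-moment} holds. Suppose that $\tau:=\inf\{t:\mathcal C(\mu_t)=0\}\leq T$. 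Then $\tau>0$ by \ref{ass:initial-limit-non-zero} and continuity, and $\mu_\tau=\sum_{z\in Z_\sigma}p_z\delta_z$ with $\sum_z p_z=1$.

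\textbf{Test function and contradiction.} Choose $g\in C_b^2(\mathbb R)$ with the following properties:
\begin{itemize}
\item $g\geq0$ and $g$ vanishes exactly on $Z_\sigma$;
\item $g(x)=(x-z)^2$ in a small neighbourhood of each $z\in Z_\sigma$;
\item $g$ is constant outside a compact set.
\end{itemize}
The last property makes $g'$, $g''$ and $\mathcal D_g$ bounded and continuous, with $g'$ compactly supported. Set $F(t):=\langle\mu_t,g\rangle\geq0$. By \eqref{eq:limit}, $F$ is $C^1$ with
\[
F'(t)=\langle\mu_t,bg'\rangle+\tfrac{\varepsilon}{2}\langle\mu_t,\sigma^2g''\rangle+\tfrac{\beta}{2}\int_{\mathbb R^2}\mathcal D_g(x,y)H(x,y)\,\mu_t(dx)\mu_t(dy).
\]
Continuity of this integrand in $t$ follows from weak continuity of $\mu_t$. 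The terms $bg'$ and $\sigma^2g''$ are bounded and continuous because $g'$ and $g''$ have compact support. For the interaction term, $\mathcal D_gH$ grows at most quadratically, and I handle it with the same truncation argument as in Lemma~\ref{lem:continuous-observable-convergence}, using the uniform eighth-moment bound \eqref{eq:continuous-limit-moment}.

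Now evaluate $F'(\tau)$ term by term.
\begin{itemize}
\item Since $g'(z)=0$ for every $z\in Z_\sigma$, the drift term vanishes.
\item The diffusion term equals $\varepsilon\sum_z p_z\sigma^2(z)\geq0$.
\item In the interaction term, an off-diagonal pair $z_1\neq z_2$ gives $\mathcal D_g(z_1,z_2)=(g'(z_1)-g'(z_2))/(z_1-z_2)=0$. A diagonal pair gives $\mathcal D_g(z,z)=g''(z)=2$.
\end{itemize}
Therefore
\[
F'(\tau)\geq\beta\sum_{z\in Z_\sigma}p_z^2H(z,z)>0,
\]
because some $p_z>0$ and $H>0$. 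By continuity of $F'$, $F'>0$ on $[\tau-h,\tau]$ for some $h>0$. Then $F(\tau-h)=F(\tau)-\int_{\tau-h}^\tau F'(s)\,ds<0$, contradicting $F\geq0$.

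The main obstacle is the regularity bookkeeping in the last step. One must build $g$ so that \eqref{eq:limit} applies with $g\in C_b^2$, and justify continuity of $F'$ at $\tau$ despite the unbounded $b$ and $H$. The compact support of $g'$ together with the eighth-moment control should handle both.
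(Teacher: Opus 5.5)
Your proof is correct and follows essentially the paper's route: the same kind of test function vanishing to second order exactly on $Z_\sigma$, the same evaluation of the limiting drift on a measure supported on $Z_\sigma$ (yielding a positive multiple of $\beta\sum_z p_z^2H(z,z)$), and a contradiction with $\langle\mu_t,g\rangle\geq0$. The only differences are cosmetic. The paper upgrades the pointwise bound to a uniform barrier ($\langle\nu,q\rangle\leq\eta\Rightarrow\Phi(\nu)\geq c_0/2$, by compactness) and concludes with a first-crossing argument, whereas you argue locally at a vanishing time using continuity of $t\mapsto\Phi(\mu_t)$. Your truncation step is also unnecessary: since $g'$ has compact support, $\mathcal D_gH$ is actually bounded and continuous.
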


\begin{proof}
    Recall that
        \begin{equation*}
            Z_\sigma :=\{x\in\mathbb R:x\sigma(x)=0\} =\{0\}\cup\{x\in\mathbb R:\sigma(x)=0\}.
        \end{equation*}
    By Condition~\ref{ass:finite-sigma-zero-set}, this set is finite. Denote its distinct elements by $z_1,\ldots,z_m$, where $1\leq m\leq M_\sigma+1$. We construct a non-negative bounded test function $q$ whose zero set is exactly $Z_\sigma$. We choose $q\in C_b^2(\mathbb R)$ such that
        \begin{equation*}
            q\geq0,\qquad q^{-1}(\{0\})=Z_\sigma,\qquad q'(z_j)=0,\qquad q''(z_j)>0\quad(1\leq j\leq m),
    \end{equation*}
    and $q=1$ outside a compact interval. Such a function is obtained by composing $P(x)=\prod_{j=1}^m(x-z_j)^2$ with a smooth function on $[0,\infty)$ that equals the identity near zero, is strictly positive away from zero, and equals one for all sufficiently large arguments. In particular, $q'$ and $q''$ have compact support. Define, for $\nu\in\mathcal P(\mathbb R)$,
        \begin{equation*}
            \Phi(\nu):= \langle\nu,bq'\rangle +\frac{\varepsilon}{2}\langle\nu,\sigma^2q''\rangle +\frac{\beta}{2}\int_{\mathbb R^2} H(x,y)\mathcal D_q(x,y)\,\nu(dx)\nu(dy).
        \end{equation*}
    All integrands are bounded and continuous, so $\Phi$ is continuous for weak convergence. To check boundedness of the last integrand, note that it vanishes when both coordinates lie outside the support of $q'$, except possibly on the diagonal, where it also vanishes outside a compact interval. If only one coordinate is large, the denominator in $\mathcal D_q$ cancels its linear growth in the bound $H(x,y)\leq K(1+|x|)(1+|y|)$.

    If $\nu$ is supported on $Z_\sigma$, write $p_j=\nu(\{z_j\})$. Since $\mathcal D_q(z_i,z_j)=0$ for $i\neq j$,
    \begin{align*}
        \Phi(\nu) &=\frac{\varepsilon}{2}\sum_{j=1}^m p_j\sigma^2(z_j)q''(z_j) + \frac{\beta}{2} \sum_{j=1}^m p_j^2H(z_j,z_j)q''(z_j)\\
            &\geq \frac{\beta}{2m} \min_{1\leq j\leq m}H(z_j,z_j)q''(z_j) =:c_0>0.
    \end{align*}
    Here we used $\sum_jp_j^2\geq1/m$ and strict positivity of $H$ on the diagonal.

    We claim that there exists $\eta>0$ such that
    \begin{equation}
        \label{eq:continuous-finite-zero-barrier}
        \langle\nu,q\rangle\leq\eta \quad\Longrightarrow\quad \Phi(\nu)\geq c_0/2.
    \end{equation}
    Otherwise there would be measures $\nu_n$ with $\langle\nu_n,q\rangle\to0$ and $\Phi(\nu_n)<c_0/2$. Because $q$ is bounded away from zero outside any neighborhood of $Z_\sigma$ and equals one near infinity, this sequence is tight and any weak limit is supported on $Z_\sigma$. Continuity of $\Phi$ then contradicts its lower bound $c_0$ on such measures.

    Fix a reachable limit $\mu$ and put $u(t)=\langle\mu_t,q\rangle$. The limiting equation yields
        \begin{equation*}
            u(t)=u(0)+\int_0^t\Phi(\mu_s)\,ds.
        \end{equation*}
    Condition~\ref{ass:initial-limit-non-zero} gives $u(0)>0$, since $\mathcal C(\mu_0)>0$ means that $\mu_0$ is not supported on $Z_\sigma$. By \eqref{eq:continuous-finite-zero-barrier}, the absolutely continuous function $u$ has strictly positive derivative whenever $u\leq\eta$. A first-crossing argument therefore gives
        \begin{equation*}
            \inf_{0\leq t\leq T}u(t)\geq\min\{u(0),\eta\}>0.
        \end{equation*}
    Thus $\mu_t$ is not supported on $Z_\sigma$ at any time, and $\mathcal C(\mu_t)>0$ for every $t\in[0,T]$. Lemma~\ref{lem:continuous-observable-convergence} makes this a continuous, finite function of time, so its minimum on $[0,T]$ is strictly positive. Finally, strict positivity of $H$ gives $\mathcal A(\mu_t)>0$ for every $t$. This proves \eqref{eq:continuous-limit-positivity}.
\end{proof}

\begin{proof}[Proof of Theorem~\ref{thm:intro-continuous}]
    Lemma~\ref{lem:continuous-finite-positivity} gives a well-defined estimator for every $N\geq2M_\sigma+3$. Lemmas~\ref{lem:continuous-observable-convergence} and \ref{lem:continuous-positive-limit} verify the hypotheses of Lemma~\ref{lem:continuous-information-stability}. Hence
        \begin{equation*}
            r_N^2J_N=\frac{N}{\varepsilon_N}J_N\xrightarrow{\mathbb P}\infty.
        \end{equation*}
    Consistency follows from Proposition~\ref{prop:continuous-martingale-criterion}.

    Under \eqref{eq:positive-common-information}, Lemma~\ref{lem:continuous-information-stability} also gives $J_N\to J_\infty$ in probability. The second assertion of Proposition~\ref{prop:continuous-martingale-criterion}, with $r_N=\sqrt{N/\varepsilon_N}$, proves \eqref{eq:intro-continuous-clt}.
\end{proof}


\begin{proof}[Proof of Theorem~\ref{thm:degenerate-continuous}]
    Lemma~\ref{lem:continuous-finite-positivity} gives a well-defined estimator for every $N\geq M_\sigma+2$. Lemma~\ref{lem:continuous-observable-convergence} applies with $\varepsilon_N=N^{-1}$ and $\varepsilon=0$.

    Fix a reachable limit $\mu\in\mathfrak L$. We work on an event of probability one on which the limiting equation, the moment bounds, and continuity of the limiting observables hold. Set
        \begin{equation*}
            m(t)=\langle\mu_t,x\rangle,\qquad V(t)=\langle\mu_t,x^2\rangle-m(t)^2.
        \end{equation*}
    Testing the limiting equation with $x$ and $x^2$, justified by truncation and the moment bounds, shows that these functions are continuously differentiable. If $L$ is a Lipschitz constant for $b$, then
        \begin{align*}
            V'(t) &= \int_{\mathbb R^2} (x-y)\bigl(b(x)-b(y)\bigr)\,\mu_t(dx)\mu_t(dy)+\beta\int_{\mathbb R^2}H(x,y)\,\mu_t(dx)\mu_t(dy)\\
                &\geq -L\int_{\mathbb R^2}(x-y)^2\,\mu_t(dx)\mu_t(dy) =-2LV(t).
        \end{align*}
    Consequently,
        \begin{equation*}
            V(t)\geq e^{-2L(t-s)}V(s), \qquad 0\leq s\leq t\leq T.
        \end{equation*}
    If $V(0)>0$, this gives $V(t)>0$ for every $t>0$. If $V(0)=0$, then $\mu_0=\delta_a$ for some $a\in\mathbb R$. Initial Condition~\ref{ass:initial-limit-non-zero} gives $a^2\sigma^2(a)>0$, and Condition~\ref{ass:noncollision-structure}(ii) yields
        \begin{equation*}
            V'(0)=\beta H(a,a) \geq\frac{\beta}{2}\sigma^2(a)>0.
        \end{equation*}
    Thus $V(t)>0$ for all sufficiently small positive $t$, and the preceding inequality extends this positivity to every $t\in(0,T]$.

    Recall the finite set
        \begin{equation*}
            Z_\sigma=\{x\in\mathbb R:x\sigma(x)=0\}.
        \end{equation*}
    Choose the function $q\in C_b^2(\mathbb R)$ constructed in the proof of Lemma~\ref{lem:continuous-positive-limit}, so that
        \begin{equation*}
            q\geq0,\qquad q^{-1}(\{0\})=Z_\sigma,\qquad q'(z)=0,\qquad q''(z)>0 \quad\text{for every }z\in Z_\sigma,
        \end{equation*}
    and $q=1$ outside a compact interval. Put $u(t)=\langle\mu_t,q\rangle$. The boundedness and continuity argument in that proof applies without strict positivity of $H$. Therefore the limiting equation gives
        \begin{equation*}
            u'(t)=\langle\mu_t,bq'\rangle +\frac{\beta}{2}\int_{\mathbb R^2} H(x,y)\mathcal D_q(x,y)\,\mu_t(dx)\mu_t(dy),
        \end{equation*}
    with a continuous right-hand side. Suppose that $\mathcal C(\mu_\tau)=0$ for some $\tau\in(0,T]$. Then $\mu_\tau$ is supported on $Z_\sigma$. Writing $p_z=\mu_\tau(\{z\})$, we obtain $u(\tau)=0$ and
        \begin{equation*}
            u'(\tau)=\frac{\beta}{2} \sum_{z\in Z_\sigma}p_z^2H(z,z)q''(z)\geq0.
        \end{equation*}
    Indeed, the drift term vanishes, and $\mathcal D_q(z,w)=0$ for distinct $z,w\in Z_\sigma$. Since $u\geq0$ and $u(\tau)=0$, its derivative from the left is non-positive, also when $\tau=T$. Hence the displayed sum equals zero. Every point carrying positive mass must therefore satisfy $H(z,z)=0$. By the assumption on the diagonal of $H$, this forces $\mu_\tau$ to be a Dirac mass, contradicting $V(\tau)>0$.

    We have proved that $\mathcal C(\mu_t)>0$ for every $t\in(0,T]$. Initial Condition~\ref{ass:initial-limit-non-zero} gives positivity at time zero. Continuity from Lemma~\ref{lem:continuous-observable-convergence} now yields
        \begin{equation}
            \label{eq:degenerate-limit-nondegeneracy}
            \inf_{0\leq t\leq T}\mathcal C(\mu_t)>0 \qquad\text{almost surely}.
        \end{equation}
    Moreover, Condition~\ref{ass:noncollision-structure}(ii) gives
        \begin{equation*}
            \mathcal A(\mu_t) =\frac12\int_{\mathbb R^2} H(x,y)\,\mu_t(dx)\mu_t(dy) \geq\frac14\int_{\mathbb R} \sigma^2(x)\,\mu_t(dx)>0,
        \end{equation*}
    since $\mathcal C(\mu_t)>0$. Thus Lemma~\ref{lem:continuous-information-stability} yields
        \begin{equation*}
            N^2J_N\xrightarrow{\mathbb P}\infty.
        \end{equation*}
    Proposition~\ref{prop:continuous-martingale-criterion} proves consistency because $r_N=N$.

    If \eqref{eq:degenerate-common-information} holds, the same information-stability lemma gives $J_N\to J_\infty$ in probability. Applying the Gaussian
    conclusion of Proposition~\ref{prop:continuous-martingale-criterion} with $r_N=N$ proves \eqref{eq:degenerate-continuous-clt}.
\end{proof}


\section{Discrete observations}
\label{s:discrete}

We prove Theorems~\ref{thm:intro-discrete} and \ref{thm:degenerate-discrete} in a common framework. Throughout this section, we assume the hypotheses used for consistency in the corresponding theorem and retain the notation
    \begin{equation*}
        \delta_N=\sqrt{\frac{\varepsilon_N}{N}}, \qquad r_N=\delta_N^{-1}, \qquad t_m=m\Delta_n, \qquad \Delta_n=\frac{T}{n}.
    \end{equation*}
All limits are taken along a deterministic sequence $n=n_N$ with $\Delta_n\to0$. Unless indicated otherwise, sums over $m$ run from $0$ to $n-1$. Constants may depend on $T$, $\beta$, and the coefficient bounds, but not on $N$ or $n$. We work with the filtration $(\mathcal F_t^N)$ supporting the solution and its driving Brownian motions, including the Brownian motion $W$ in \eqref{eq:intro-Y-one-dimensional}.

\subsection{Euler contrast and error decomposition}

For discretely observed diffusion processes, the exact transition density is generally unknown. A standard alternative is to minimize a contrast function derived from a time-discretization of the process. Approximating the dynamics of $Y_N$ over each interval $[t_m,t_{m+1}]$ using the Euler-Maruyama scheme yields
    \begin{equation*}
        Y_N(t_{m+1})-Y_N(t_m) \approx \bigl(B_N(t_m)+2\beta A_N(t_m)\bigr)\Delta_n +2\delta_N\sqrt{C_N(t_m)}\,\Delta W_m,
    \end{equation*}
where $\Delta W_m=W_{t_{m+1}}-W_{t_m}$ is independent of $\mathcal F_{t_m}^N$ and has distribution $\mathcal N(0,\Delta_n)$. The contrast function penalizes the squared difference between the observed increment and the Euler drift, weighted by the inverse of the local discrete variance. We minimize
    \begin{equation*}
        U_n(\vartheta) :=\sum_{m=0}^{n-1} \frac{\left[Y_N(t_{m+1})-Y_N(t_m)-\bigl(B_N(t_m)+2\vartheta A_N(t_m)\bigr)\Delta_n\right]^2}{4\delta_N^2 C_N(t_m)\Delta_n}, \qquad \vartheta\in\mathbb R.
    \end{equation*}
By differentiating $U_n(\vartheta)$ with respect to $\vartheta$ and setting the derivative to zero, we obtain the discrete-time weighted estimator \eqref{eq:discrete-estimator} as the unique unconstrained minimizer. Indeed, the second derivative is $2r_N^2J_{N,n}^{\,d}>0$. The quantity $J_{N,n}^{\,d}$ is the normalized information term
associated with this contrast.

By Lemma~\ref{lem:continuous-finite-positivity}, $A_N$ and $C_N$ are continuous and strictly positive on $[0,T]$. Hence $0<J_{N,n}^{\,d}<\infty$ almost surely for every admissible $N$ and every $n\geq1$. Substituting the exact dynamics of $Y_N$ into \eqref{eq:discrete-estimator} gives
    \begin{align*}
        \widehat\beta_{N,n}^{\,d}-\beta &=\frac{\delta_N}{J_{N,n}^{\,d}}\sum_{m=0}^{n-1}\frac{A_N(t_m)}{C_N(t_m)}\int_{t_m}^{t_{m+1}}\sqrt{C_N(s)}\,dW_s\\
            &\quad+\frac{1}{2J_{N,n}^{\,d}} \sum_{m=0}^{n-1}\frac{A_N(t_m)}{C_N(t_m)}\int_{t_m}^{t_{m+1}}\bigl[B_N(s)-B_N(t_m)+2\beta\bigl(A_N(s)-A_N(t_m)\bigr)\bigr]ds.
    \end{align*}         
This formulation separates the error into a martingale noise term and a random time-discretization remainder. For the subsequent analysis, write
    \begin{align}
        \label{eq:def-LN-discrete}
        L_{N,n}(t) &:=\sum_{m=0}^{n-1}\frac{A_N(t_m)}{C_N(t_m)} \int_{t_m}^{t_{m+1}}\mathbf1_{\{s\leq t\}} \sqrt{C_N(s)}\,dW_s, \\
        U_{N,n} &:=\frac12\sum_{m=0}^{n-1}\frac{A_N(t_m)}{C_N(t_m)} \int_{t_m}^{t_{m+1}} \bigl[B_N(s)-B_N(t_m)+2\beta\bigl(A_N(s)-A_N(t_m)\bigr)\bigr]ds.\nonumber
\end{align}
Then
    \begin{equation}
        \label{eq:discrete-error-ratios}
        \widehat\beta_{N,n}^{\,d}-\beta=R_{N,n}+M_{N,n}, \qquad R_{N,n}:=\frac{U_{N,n}}{J_{N,n}^{\,d}}, \qquad M_{N,n}:=\delta_N\frac{L_{N,n}(T)}{J_{N,n}^{\,d}}.
    \end{equation}
Here $L_{N,n}$ is a continuous local martingale, whereas $R_{N,n}$ is a random discretization remainder. The quadratic variation of $L_{N,n}$ satisfies
    \begin{align*}
        \langle L_{N,n}\rangle_T &=\sum_{m=0}^{n-1}\frac{A_N(t_m)^2}{C_N(t_m)^2} \int_{t_m}^{t_{m+1}}C_N(s)\,ds =J_{N,n}^{\,d}+\mathcal E_{N,n}^{(C)}, \\
        \mathcal E_{N,n}^{(C)} &:=\sum_{m=0}^{n-1}\frac{A_N(t_m)^2}{C_N(t_m)^2} \int_{t_m}^{t_{m+1}}\bigl(C_N(s)-C_N(t_m)\bigr)ds.
    \end{align*}

\subsection{Macroscopic regularity and discrete information}

Our estimator depends exclusively on the macroscopic empirical processes $A_N,B_N,C_N$. By applying It\^o's formula directly to these quantities, the singular interaction drift symmetrizes into divided differences. The regularity in Condition~\ref{ass:discrete-regularity} removes the singularity and gives the following bounds. The proof is postponed to the end of this section.

\begin{lemma}[Macroscopic regularity and continuity]
    \label{l:macroscopic-regularity}
    Assume Conditions~\ref{ass:epsilon}-\ref{ass:bounds}, Initial Condition~\ref{ass:initial:moments}, and Condition~\ref{ass:discrete-regularity}. For each $N$ under consideration, let $X^N$ be a global continuous non-sticky $\overline W_N$-valued solution of \eqref{eq:intro-general-system}, with integrable singular drift terms.

    Then there exists a constant $K_T<\infty$, independent of $N$, such that
        \begin{equation}
            \label{eq:discrete-observable-moments}
            \sup_N\mathbb E\left[\sup_{t\leq T} \bigl(|A_N(t)|^4+|B_N(t)|^4+|C_N(t)|^2\bigr) \right]\leq K_T.
        \end{equation}
    For $F\in\{A,B,C\}$, there are predictable processes $D_N^F$ and square-integrable continuous martingales $M_N^F$, starting at zero, such that
        \begin{equation*}
            F_N(t)=F_N(0)+\int_0^tD_N^F(s)\,ds+M_N^F(t).
        \end{equation*}
    Their drifts and quadratic variations satisfy
        \begin{align}
            \label{eq:discrete-drift-bound}
            \sup_N\sup_{t\leq T}\mathbb E\left[|D_N^A(t)|^2+|D_N^B(t)|^2+|D_N^C(t)|^2\right]&\leq K_T, \\
            \label{eq:discrete-macroscopic-bracket-bound}
            \sup_N\sup_{t\leq T}\delta_N^{-2}\mathbb E\left[\frac{d}{dt}\bigl(\langle M_N^A\rangle_t+\langle M_N^B\rangle_t+\langle M_N^C\rangle_t\bigr)\right]&\leq K_T.
        \end{align}
    Consequently, for $0\leq s<t\leq T$,
        \begin{equation}
            \label{eq:discrete-macroscopic-increments}
            \mathbb E\left[|A_N(t)-A_N(s)|^2+|B_N(t)-B_N(s)|^2+|C_N(t)-C_N(s)|^2\right] \leq K_T\bigl((t-s)^2+\delta_N^2(t-s)\bigr).
        \end{equation}
\end{lemma}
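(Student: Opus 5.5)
The plan is to apply It\^o's formula directly to the empirical observables $A_N,B_N,C_N$, using the $C^2$ extensions from Condition~\ref{ass:discrete-regularity}, and then to bound the resulting drifts and brackets by the uniform moment bounds \eqref{eq:continuous-uniform-moments} of Lemma~\ref{lem:continuous-observable-convergence}. The moment bound \eqref{eq:discrete-observable-moments} comes first and is immediate. Condition~\ref{ass:bounds} gives $|A_N|\le K(1+m_{2,N})$, $|B_N|\le K(1+m_{2,N})$ and $|C_N|\le K(1+m_{4,N})$. Jensen then gives $|A_N|^4+|B_N|^4\le K(1+m_{8,N})$ and $|C_N|^2\le K(1+m_{8,N})$, and \eqref{eq:continuous-uniform-moments} with $p=8$ concludes.

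For the semimartingale decompositions, take a generic one-particle observable $\frac1N\sum_i\phi(X^{i,N})$ with $\phi\in\{\phi_{B,N},\phi_C\}$. It\^o's formula gives:
\begin{itemize}
\item a martingale part $\frac{\sqrt{\varepsilon_N}}N\sum_i\int\phi'\sigma\,dB^i$;
\item a regular drift $\frac1N\sum_i(\phi' b+\frac{\varepsilon_N}2\phi''\sigma^2)$;
\item the singular part $\frac{\beta}{N^2}\sum_{i\ne j}\phi'(X^i)H/(X^i-X^j)$. By symmetry of $H$ this equals $\frac{\beta}{N^2}\sum_{i<j}H(X^i,X^j)\,(\phi'(X^i)-\phi'(X^j))/(X^i-X^j)$, which is bounded by $H\sup|\phi''|$ on the segment. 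Since $E$ is an interval containing all particles, the segment stays in $E$.
\end{itemize}
For the pair observable $A_N=\frac1{N^2}\sum_{i<j}H(X^i,X^j)$, It\^o's formula involves $\partial_xH$, $\partial_yH$ and second derivatives. The singular contributions combine into the sum over $i$ and over $j\neq k$ of $\partial_1 H(X^i,X^k)$ times the repulsion between $k$ and $j$. Symmetrising each pair $(k,j)$ produces divided differences $(\partial_1H(X^i,X^k)-\partial_1H(X^i,X^j))/(X^k-X^j)$, bounded via $\partial^2H$ on $E^2$. Non-stickiness, with zero Lebesgue time on collisions, together with integrability of the singular drift justifies these identities in integrated form.

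This yields the drift bound \eqref{eq:discrete-drift-bound}. The growth bounds in (D1) combined with (A3) give, for instance for $C$, $|D^C_N|\le K(1+m_{4,N})$ for the $\phi_C'b$ term. The interaction term for $C$ is bounded by $H(x,y)\,K(1+|x|^2+|y|^2)$, hence by $K(1+m_{4,N})$. For $A$, the interaction term is a triple sum with $N^{-3}$ normalisation, whose summands are polynomial of degree at most $4$, so again $|D^A_N|\le K(1+m_{4,N})$. Squaring gives degree $8$, so \eqref{eq:continuous-uniform-moments} with $p=8$ suffices.

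The brackets are computed by independence of the $B^i$. For $C$, $\frac{d}{dt}\langle M^C_N\rangle=\frac{\varepsilon_N}{N^2}\sum_i(\phi_C')^2\sigma^2\le\frac{\varepsilon_N}{N}K(1+m_{8,N})=\delta_N^2K(1+m_{8,N})$, where $(\phi_C')^2\sigma^2$ has degree $3\cdot2+2=8$. This is exactly the borderline needing $p=8$. For $A$, the martingale integrand of particle $i$ is $\frac{\sqrt{\varepsilon_N}}{N^2}\sum_j\partial_1H(X^i,X^j)\sigma(X^i)$. Bounding the $j$-sum by Cauchy--Schwarz gives the density $\delta_N^2K(1+m_{8,N})$ after averaging. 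The martingales are square integrable because the expected bracket is finite.

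Finally, \eqref{eq:discrete-macroscopic-increments} follows by writing $F_N(t)-F_N(s)=\int_s^tD^F_N+M^F_N(t)-M^F_N(s)$. Cauchy--Schwarz gives $\mathbb E|\int_s^tD|^2\le(t-s)\int_s^t\mathbb E|D|^2\le K(t-s)^2$, and the It\^o isometry bounds the martingale part by $\delta_N^2K(t-s)$.

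I expect the main obstacle to be the careful symmetrisation for $A_N$. One must check that every singular term pairs into a divided difference of a $C^1$ function on $E$, including the terms where the differentiated particle appears as the second argument of $H$. One must also confirm that the diagonal and collision times do not matter, which rests on non-stickiness. A further subtlety is that $D^F_N$ must be predictable and well-defined on the null time-set of collisions; there I would define it by the continuous extension on the diagonal.
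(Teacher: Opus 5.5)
Your overall route is the paper's:
\begin{itemize}
\item the same Jensen reduction to $m_{8,N}$ for \eqref{eq:discrete-observable-moments};
\item the same divided-difference symmetrization and degree counting for $B_N$ and $C_N$, including the borderline degree-eight bracket of $M_N^C$;
\item essentially the same Cauchy--Schwarz bound on the bracket of $M_N^A$ (the paper obtains $m_{6,N}$, but your $m_{8,N}$ also suffices);
\item the same Cauchy--Schwarz/It\^o-isometry step for \eqref{eq:discrete-macroscopic-increments}.
\end{itemize}
The one step that does not go through as you state it is the symmetrization of the singular drift of $A_N$, which you rightly flag as the crux.

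First, a slip. If $k$ is the particle on which the repulsion acts, the factor in front of it must be the derivative of $H(X^i,X^k)$ in the $k$-slot. That is $\partial_2H(X^i,X^k)=\partial_1H(X^k,X^i)$, not $\partial_1H(X^i,X^k)$. So the singular drift is
\[
\frac{\beta}{N^3}\sum_{k}\sum_{i\neq k}\sum_{j\neq k}\partial_2H(X^i,X^k)\,\frac{H(X^k,X^j)}{X^k-X^j}.
\]
More importantly, for fixed $i$ the index set $\{(k,j):k\neq j,\ k\neq i\}$ is not invariant under $k\leftrightarrow j$. A term with $j=i$ is mapped to one with $k=i$, and such terms do not occur.

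Your pairing therefore only treats pairwise distinct triples. This is the paper's $S_2$; your quotient is its $(\partial_1H(x_k,x_i)-\partial_1H(x_j,x_i))/(x_k-x_j)$ after renaming. The leftover terms $\sum_{i\neq k}\partial_2H(X^i,X^k)H(X^k,X^i)/(X^k-X^i)$ are individually singular. They must be paired among themselves by the swap $i\leftrightarrow k$, which produces the paper's $S_1$ with quotient $(\partial_1H(x,y)-\partial_2H(x,y))/(x-y)$. This numerator vanishes on the diagonal because $\partial_1H(y,y)=\partial_2H(y,y)$ by symmetry of $H$ and of its extension in Condition~\ref{ass:discrete-regularity}. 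The quotient is then controlled by $\int_0^1(\partial_{11}H-\partial_{12}H)$ along a segment in $E^2$.

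Alternatively, you could add and subtract the absent terms with $k=i$. You would then symmetrize the completed sum in $(k,j)$, and treat the subtracted sum through the divided difference of $y\mapsto\partial_2H(y,y)$.

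With this split, both quotients are bounded by $K(1+x^2+y^2+z^2)$, and the $S_1$ part even carries an extra factor $N^{-1}$. Your bound $|D_N^A|\le K(1+m_{4,N})$, and hence the rest of your argument, then goes through.
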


The continuous-observation results provide the required control of the information and the denominators. By Lemma~\ref{lem:continuous-information-stability}, for every $\eta\in(0,1)$ there exist deterministic constants $K_\eta\geq1$ and $N_\eta\in\mathbb N$ such that
    \begin{equation}
        \label{eq:discrete-continuous-input}
        \mathbb P\left(K_\eta^{-1}\leq J_N\leq K_\eta, \quad \inf_{0\leq t\leq T}C_N(t)\geq K_\eta^{-1} \right)\geq1-\eta, \qquad N\geq N_\eta.
    \end{equation}
Moreover, the moment bound \eqref{eq:discrete-observable-moments} and Markov's inequality control the supremum of $|A_N|$. Combining this with the preceding lower bound on $C_N$, and increasing $K_\eta$ and $N_\eta$ if necessary, we also obtain
    \begin{equation}
        \label{eq:discrete-weight-tightness}
        \mathbb P\left( \sup_{0\leq t\leq T}\left|\frac{A_N(t)}{C_N(t)}\right| \leq K_\eta \right)\geq1-\eta, \qquad N\geq N_\eta.
    \end{equation}
These bounds allow us to estimate the discretization errors on events whose probabilities are arbitrarily close to one.

\begin{lemma}[Stability of the discrete information]
    \label{lem:discrete-information}
    Assume the hypotheses for consistency in either Theorem~\ref{thm:intro-discrete} or Theorem~\ref{thm:degenerate-discrete}. Let $n=n_N$ be a deterministic sequence such that $\Delta_n\to0$.

    Then, for every $\eta\in(0,1)$, there exist deterministic constants $c_\eta>0$ and $N_\eta\in\mathbb N$ such that
        \begin{equation}
            \label{eq:discrete-information-stability}
            \mathbb P\left(J_{N,n}^{\,d}\geq c_\eta\right) \geq1-\eta, \qquad N\geq N_\eta.
        \end{equation}
    Moreover,
        \begin{equation*}
            \frac{\langle L_{N,n}\rangle_T}{J_{N,n}^{\,d}} \xrightarrow{\mathbb P}1, \qquad r_N^2J_{N,n}^{\,d}\xrightarrow{\mathbb P}\infty.
        \end{equation*}
    Under the common-information assumption in the corresponding theorem,
        \begin{equation*}
            J_{N,n}^{\,d}\xrightarrow{\mathbb P}J_\infty,
        \end{equation*}
    where $J_\infty\in(0,\infty)$ is the common deterministic limiting information.
\end{lemma}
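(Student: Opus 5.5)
The plan is to compare the discrete information $J_{N,n}^{\,d}$ with the continuous information $J_N$ from Lemma~\ref{lem:continuous-information-stability}, and to show that both differences $J_{N,n}^{\,d}-J_N$ and $\mathcal E_{N,n}^{(C)}$ tend to zero in probability. All three assertions then follow from results already established.

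The first step is to work on the good event $\Omega_{N,\eta}$ given by \eqref{eq:discrete-continuous-input} and \eqref{eq:discrete-weight-tightness}. On this event $C_N\geq K_\eta^{-1}$ on $[0,T]$ and $|A_N/C_N|\leq K_\eta$. Set $g(a,c)=a^2/c$, which is locally Lipschitz on $\{c\geq K_\eta^{-1}\}$. On $\Omega_{N,\eta}$ we then have
\begin{equation*}
    |J_{N,n}^{\,d}-J_N|\leq\sum_{m}\int_{t_m}^{t_{m+1}}\bigl|g(A_N(t_m),C_N(t_m))-g(A_N(s),C_N(s))\bigr|\,ds\leq K'_\eta\sum_m\int_{t_m}^{t_{m+1}}\Lambda_N\bigl(|A_N(s)-A_N(t_m)|+|C_N(s)-C_N(t_m)|\bigr)ds,
\end{equation*}
where $\Lambda_N$ is a polynomial in $\sup_t|A_N|$. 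By \eqref{eq:discrete-observable-moments} and Markov's inequality, $\Lambda_N$ can be bounded by a constant off an event of probability at most $\eta$, after enlarging $K_\eta$. To avoid conditioning on $\Omega_{N,\eta}$ when taking expectations, I will multiply by the indicator of the event $\{\sup|A_N|\leq K_\eta\}$ and then bound the expectation of the remaining increments without the indicator. By Cauchy--Schwarz and \eqref{eq:discrete-macroscopic-increments}, the expected integral is $O(\Delta_n+\delta_N\Delta_n^{1/2})\to0$. Hence $J_{N,n}^{\,d}-J_N\to0$ in probability. The same argument applies to
\begin{equation*}
    |\mathcal E_{N,n}^{(C)}|\leq K_\eta^2\sum_m\int_{t_m}^{t_{m+1}}|C_N(s)-C_N(t_m)|\,ds
\end{equation*}
on $\Omega_{N,\eta}$, so $\mathcal E_{N,n}^{(C)}\to0$ in probability.

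The conclusions then follow in turn. Since $\mathbb P(J_N\geq K_\eta^{-1})\geq1-\eta$ and $J_{N,n}^{\,d}-J_N\to0$, we obtain \eqref{eq:discrete-information-stability} with $c_\eta=(2K_\eta)^{-1}$ for large $N$, after replacing $\eta$ by $\eta/2$ in the construction. Next, $\langle L_{N,n}\rangle_T/J_{N,n}^{\,d}=1+\mathcal E_{N,n}^{(C)}/J_{N,n}^{\,d}$, and this ratio tends to one because the numerator of the correction vanishes in probability while the denominator is bounded below with high probability. The divergence $r_N^2J_{N,n}^{\,d}\to\infty$ follows as in \eqref{eq:continuous-information-divergence}, since $r_N^2\geq N\to\infty$ and $J_{N,n}^{\,d}\geq c_\eta$ with high probability. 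Under the common-information assumption, $J_N\to J_\infty$ in probability by Lemma~\ref{lem:continuous-information-stability}, and hence $J_{N,n}^{\,d}\to J_\infty$.

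I expect the main obstacle to be the bookkeeping between the high-probability event and the expectation bounds. The increment estimate \eqref{eq:discrete-macroscopic-increments} holds in $L^2$, whereas the Lipschitz constant of $g$ and the lower bound on $C_N$ hold only on random events. The remedy is the truncation described above: bound the product $\mathbf 1_{\Omega_{N,\eta}}\times(\text{increment})$ by a constant times the increment, take expectations of the increment alone, and absorb the complement into the $\eta$ term. In the degenerate regime the only changes are $\varepsilon_N=1/N$ and the use of the corresponding positivity input from Lemma~\ref{lem:continuous-finite-positivity}. The continuous-time lemmas cover both regimes, so the argument carries over verbatim.
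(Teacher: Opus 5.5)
Your proposal is correct and follows the paper's proof essentially step by step. Both arguments localize on a high-probability event where $C_N\geq K^{-1}$ and the numerator is controlled. Both use the Lipschitz property of $(a,c)\mapsto a^2/c$ there, together with the integrated increment bound $O(\Delta_n+\delta_N\sqrt{\Delta_n})$, to get $J_{N,n}^{\,d}-J_N\to0$ and $\mathcal E_{N,n}^{(C)}\to0$ in probability. The remaining assertions are then read off from Lemma~\ref{lem:continuous-information-stability}.

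One simplification: your factor $\Lambda_N$ is unnecessary, though harmless. The set $\{c\geq K_\eta^{-1},\ |a|\leq K_\eta c\}$ is convex, and on it $a^2/c$ is already Lipschitz with a constant depending only on $K_\eta$.
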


\begin{proof}
    By Cauchy-Schwarz and \eqref{eq:discrete-macroscopic-increments}, for $F\in\{A,B,C\}$,
        \begin{equation}
            \label{eq:discrete-integrated-increment-bound}
            \sum_{m=0}^{n-1}\int_{t_m}^{t_{m+1}} \mathbb E|F_N(s)-F_N(t_m)|\,ds \leq K_T(\Delta_n+\delta_N\sqrt{\Delta_n}).
        \end{equation}
    Consider the events
        \begin{equation*}
            G_N(K):=\left\{\inf_{t\leq T}C_N(t)\geq K^{-1},\quad \sup_{t\leq T}|A_N(t)|\leq K\right\}.
        \end{equation*}
    Equations~\eqref{eq:discrete-observable-moments} and \eqref{eq:discrete-continuous-input} imply
        \begin{equation*}
            \lim_{K\to\infty}\limsup_{N\to\infty} \mathbb P(G_N(K)^c)=0.
        \end{equation*}
    On $G_N(K)$, the function $(a,c)\mapsto a^2/c$ is Lipschitz over the range of the observables. Therefore,
        \begin{align*}
            \mathbf1_{G_N(K)}|J_{N,n}^{\,d}-J_N| &\leq K_K\sum_{m=0}^{n-1}\int_{t_m}^{t_{m+1}}\bigl(|A_N(s)-A_N(t_m)|+|C_N(s)-C_N(t_m)|\bigr)ds,\\
            \mathbf1_{G_N(K)}|\mathcal E_{N,n}^{(C)}| &\leq K^4\sum_{m=0}^{n-1}\int_{t_m}^{t_{m+1}} |C_N(s)-C_N(t_m)|\,ds.
        \end{align*}
    Taking expectations, applying \eqref{eq:discrete-integrated-increment-bound}, and then removing the localization proves the two approximation bounds. The remaining assertions follow from \eqref{eq:discrete-continuous-input}, $r_N\to\infty$, and Lemma~\ref{lem:continuous-information-stability}.
\end{proof}

subsection{Consistency}

\begin{proof}[Proof of consistency in Theorems~\ref{thm:intro-discrete} and \ref{thm:degenerate-discrete}]
    Recall from \eqref{eq:discrete-error-ratios} that the estimation error decomposes into
        \begin{equation*}
            \widehat\beta_{N,n}^{\,d}-\beta=R_{N,n}+M_{N,n}.
        \end{equation*}
    We analyze and bound each component separately to show that both converge to zero in probability. We first control the discretization remainder $R_{N,n}$. Write $Z_N=B_N+2\beta A_N$, so that
        \begin{equation*}
            R_{N,n}=\frac{U_{N,n}}{J_{N,n}^{\,d}},\qquad U_{N,n}=\frac12\sum_{m=0}^{n-1}\frac{A_N(t_m)}{C_N(t_m)}\int_{t_m}^{t_{m+1}}\bigl(Z_N(s)-Z_N(t_m)\bigr)\,ds.
        \end{equation*}
    For $K>0$, consider
        \begin{equation*}
            G_N(K)=\left\{\inf_{t\leq T}C_N(t)\geq K^{-1},\quad \sup_{t\leq T}|A_N(t)|\leq K\right\}.
        \end{equation*}
    By \eqref{eq:discrete-observable-moments} and \eqref{eq:discrete-continuous-input},
        \begin{equation*}
            \lim_{K\to\infty}\limsup_{N\to\infty} \mathbb P(G_N(K)^c)=0.
        \end{equation*}
    On this event, $1/C_N(t_m)\leq K$. Taking the absolute value and expectation and applying Cauchy-Schwarz gives
        \begin{align*}
            \mathbb E[\mathbf1_{G_N(K)}|U_{N,n}|] &\leq\frac K2\sum_m\int_{t_m}^{t_{m+1}}\mathbb E\bigl[|A_N(t_m)|\,|Z_N(s)-Z_N(t_m)|\bigr]ds\\
            &\leq\frac K2\sum_m\int_{t_m}^{t_{m+1}} \bigl(\mathbb E[A_N(t_m)^2]\bigr)^{1/2} \bigl(\mathbb E|Z_N(s)-Z_N(t_m)|^2\bigr)^{1/2}ds.
        \end{align*}
    Lemma~\ref{l:macroscopic-regularity} bounds the first factor uniformly. For $\Delta_n\leq1$, its increment estimate and $\delta_N\leq1$ bound the second factor by $K_T\sqrt{s-t_m}$. Consequently,
        \begin{equation*}
            \mathbb E[\mathbf1_{G_N(K)}|U_{N,n}|] \leq K_{K,T}\sum_m\int_{t_m}^{t_{m+1}}\sqrt{s-t_m}\,ds \leq K_{K,T}\sqrt{\Delta_n}.
        \end{equation*}
    Markov's inequality and removal of the localization give $U_{N,n}=O_{\mathbb P}(\sqrt{\Delta_n})$. Since $(J_{N,n}^{\,d})^{-1}=O_{\mathbb P}(1)$, we obtain
        \begin{equation*}
            R_{N,n}\xrightarrow{\mathbb P}0.
        \end{equation*}
    Next, we address the martingale error $M_{N,n}$. The continuous local martingale $L_{N,n}$ defined in \eqref{eq:def-LN-discrete} satisfies
        \begin{equation*}
            \langle L_{N,n}\rangle_T=J_{N,n}^{\,d}+\mathcal E_{N,n}^{(C)}.
        \end{equation*}
    By Lemma~\ref{lem:discrete-information}, $\mathcal E_{N,n}^{(C)}/J_{N,n}^{\,d}\to0$ in probability. Thus \begin{equation*}
        M_{N,n} =\left(\delta_N\frac{L_{N,n}(T)}{\langle L_{N,n}\rangle_T}\right) \frac{\langle L_{N,n}\rangle_T}{J_{N,n}^{\,d}} =:\widetilde M_{N,n} \left(1+\frac{\mathcal E_{N,n}^{(C)}}{J_{N,n}^{\,d}}\right).
    \end{equation*}
    Since the second factor converges to one, it suffices to show that $\widetilde M_{N,n}\to0$ in probability. For any $\lambda\in\mathbb R$, the stochastic exponential
        \begin{equation*}
            \exp\left(\lambda L_{N,n}(t) -\frac{\lambda^2}{2}\langle L_{N,n}\rangle_t\right)
        \end{equation*}
    is a positive local martingale and therefore a supermartingale whose expectation is at most one. Fix $a,\eta>0$ and consider
        \begin{equation*}
            E_{N,n}^{+}:= \left\{L_{N,n}(T)\geq\frac\eta4 r_N\langle L_{N,n}\rangle_T, \quad r_N^2\langle L_{N,n}\rangle_T\geq a\right\}.
        \end{equation*}
    Setting $\lambda=\eta r_N/4$, on $E_{N,n}^{+}$ the exponent is at least $\eta^2a/32$. Markov's inequality therefore gives $\mathbb P(E_{N,n}^{+})\leq\exp(-\eta^2a/32)$. The same argument applied to $-L_{N,n}$ bounds the lower tail. Splitting according to whether $r_N^2\langle L_{N,n}\rangle_T<a$, we deduce
        \begin{equation}
            \label{eq:martingale-tail-bound}
            \mathbb P\left(|\widetilde M_{N,n}|>\frac\eta4\right) \leq\mathbb P\left(r_N^2\langle L_{N,n}\rangle_T<a\right) +2\exp(-\eta^2a/32).
        \end{equation}
    Lemma~\ref{lem:discrete-information} gives $r_N^2J_{N,n}^{\,d}\to\infty$ and $\langle L_{N,n}\rangle_T/J_{N,n}^{\,d}\to1$ in probability. Hence $r_N^2\langle L_{N,n}\rangle_T\to\infty$ in probability. Letting first $N\to\infty$ and then $a\to\infty$ proves $\widetilde M_{N,n}\to0$, and consequently $M_{N,n}\to0$. Since both terms of the estimation error vanish in probability, the estimator is consistent in both regimes.
\end{proof}

\subsection{Asymptotic normality}

\begin{proof}[Proof of asymptotic normality in Theorems~\ref{thm:intro-discrete} and \ref{thm:degenerate-discrete}] 
    By scaling the error decomposition \eqref{eq:discrete-error-ratios} by $r_N$, we write
        \begin{equation}
            \label{eq:discrete-scaled-error}
            r_N(\widehat\beta_{N,n}^{\,d}-\beta) =r_NR_{N,n}+r_NM_{N,n} =\frac{r_NU_{N,n}}{J_{N,n}^{\,d}} +\frac{L_{N,n}(T)}{J_{N,n}^{\,d}}.
        \end{equation}
    The common-information assumption and Lemma~\ref{lem:discrete-information} imply $J_{N,n}^{\,d}\to J_\infty$ in probability. To prove that $r_NR_{N,n}\to0$ in probability, we bound $U_{N,n}$ using the It\^o decomposition of $Z_N=B_N+2\beta A_N$:
        \begin{equation*}
            dZ_N(s)=D_N^Z(s)\,ds+dM_N^Z(s),\qquad D_N^Z=D_N^B+2\beta D_N^A,\quad M_N^Z=M_N^B+2\beta M_N^A.
        \end{equation*}
    Lemma~\ref{l:macroscopic-regularity} gives
        \begin{equation*}
            \sup_{N,t\leq T}\mathbb E|D_N^Z(t)|^2<\infty, \qquad\mathbb E\langle M_N^Z\rangle_T\leq K_T\delta_N^2.
        \end{equation*}
    Substituting this decomposition into $U_{N,n}$ splits it into a drift term $U_{N,n}^{(D)}$ and a martingale term $U_{N,n}^{(M)}$:
        \begin{align*}
            U_{N,n}^{(D)} &=\frac12\sum_m w_{N,m} \int_{t_m}^{t_{m+1}}\int_{t_m}^sD_N^Z(u)\,du\,ds,\\
            U_{N,n}^{(M)} &=\frac12\sum_m w_{N,m} \int_{t_m}^{t_{m+1}}\bigl(M_N^Z(s)-M_N^Z(t_m)\bigr)ds, \qquad w_{N,m}:=\frac{A_N(t_m)}{C_N(t_m)}.
        \end{align*}
    For $K>0$, put
        \begin{equation*}
            H_N(K):=\left\{ \sup_{t\leq T}|A_N(t)/C_N(t)|\leq K\right\}.
        \end{equation*}
    On this event, taking the $L^1$ norm and using the drift bound yields
        \begin{equation*}
            \mathbb E[\mathbf1_{H_N(K)}|U_{N,n}^{(D)}|] \leq\frac K2\sum_m\int_{t_m}^{t_{m+1}} \int_{t_m}^s\mathbb E|D_N^Z(u)|\,du\,ds \leq K_{K,T}\Delta_n.
        \end{equation*}
    By \eqref{eq:discrete-weight-tightness}, removal of the localization gives $U_{N,n}^{(D)}=O_{\mathbb P}(\Delta_n)$. For the martingale term, stochastic Fubini gives
        \begin{equation*}
            \int_{t_m}^{t_{m+1}} \bigl(M_N^Z(s)-M_N^Z(t_m)\bigr)ds =\int_{t_m}^{t_{m+1}}(t_{m+1}-u)\,dM_N^Z(u).
        \end{equation*}
    Use the bounded predictable weights $w_{N,m}^{(K)}=w_{N,m}\mathbf1_{\{|w_{N,m}|\leq K\}}$ and set
        \begin{equation*}
            \Gamma_m^{(K)}:=\frac12w_{N,m}^{(K)} \int_{t_m}^{t_{m+1}}(t_{m+1}-u)\,dM_N^Z(u).
        \end{equation*}
    Each $\Gamma_m^{(K)}$ has conditional mean zero given $\mathcal F_{t_m}^N$, so the cross terms in the second moment vanish. It\^o's isometry gives
        \begin{align*}
            \mathbb E\left[\left(\sum_m\Gamma_m^{(K)}\right)^2\right] &=\sum_m\mathbb E[(\Gamma_m^{(K)})^2]\\
            &\leq\frac{K^2}{4}\Delta_n^2 \mathbb E\langle M_N^Z\rangle_T \leq K_{K,T}\delta_N^2\Delta_n^2.
        \end{align*}
    The truncated expression equals $U_{N,n}^{(M)}$ on $H_N(K)$. Removing the truncation therefore yields
        \begin{equation}
            U_{N,n}^{(D)} = O_{\mathbb P}(\Delta_n),\qquad U_{N,n}^{(M)}=O_{\mathbb P}(\delta_N\Delta_n),\qquad U_{N,n}=O_{\mathbb P}(\Delta_n).
        \end{equation}
    Consequently,
        \begin{equation*}
            r_NR_{N,n}=O_{\mathbb P}(r_N\Delta_n) \xrightarrow{\mathbb P}0
        \end{equation*}
    under the mesh condition in the corresponding theorem.

    We establish the asymptotic normality of the martingale numerator $L_{N,n}(T)$ via the central limit theorem for martingale difference arrays. The assumptions give tightness of the weights, but no inverse-moment bound for $C_N$, so we use predictable truncation. Set
        \begin{equation*}
            K_N  =\Delta_n^{-1/8},\qquad
            w_{N,m}^{*} = w_{N,m}\mathbf1_{\{|w_{N,m}|\leq K_N\}},\qquad
            \xi_{m,N} = w_{N,m}^{*} \int_{t_m}^{t_{m+1}}\sqrt{C_N(s)}\,dW_s.
        \end{equation*}
    Since $K_N\to\infty$ and the supremum of the weights is tight,
        \begin{equation}
            \label{eq:discrete-array-localization}
            \mathbb P\left(\sum_{m=0}^{n-1}\xi_{m,N}\neq L_{N,n}(T)\right)\longrightarrow0.
        \end{equation}
    Write $\mathbb E_m[\cdot]=\mathbb E[\cdot\mid\mathcal F_{t_m}^N]$. We verify the following three properties of the array.

\medskip

    \noindent\textit{(i) Vanishing conditional expectations.}
    The prefactor is bounded and $\mathcal F_{t_m}^N$-measurable. Since $\sup_{N,t\leq T}\mathbb E C_N(t)<\infty$, the stochastic integral is square-integrable and
        \begin{equation*}
            \mathbb E_m[\xi_{m,N}] = 0.
        \end{equation*}

\medskip

    \noindent\textit{(ii) Convergence of the conditional variances to $J_\infty$.} Applying It\^o's isometry conditionally gives
        \begin{equation*}
            \mathbb E_m[\xi_{m,N}^2] =(w_{N,m}^{*})^2 \int_{t_m}^{t_{m+1}}\mathbb E_m[C_N(s)]\,ds.
        \end{equation*}
    Adding and subtracting $C_N(t_m)$, we decompose the sum as
        \begin{equation*}
            \sum_m\mathbb E_m[\xi_{m,N}^2] = \Delta_n\sum_m(w_{N,m}^{*})^2C_N(t_m)+D_{N,n}^{*},
        \end{equation*}
    where
        \begin{equation*}
            D_{N,n}^{*}:=\sum_m(w_{N,m}^{*})^2 \int_{t_m}^{t_{m+1}}\mathbb E_m[C_N(s)-C_N(t_m)]\,ds.
        \end{equation*}
    The first term equals $J_{N,n}^{\,d}$ whenever no weight is truncated, and hence converges in probability to $J_\infty$. For the remainder, conditional Jensen's inequality and \eqref{eq:discrete-integrated-increment-bound} give
        \begin{align*}
            \mathbb E|D_{N,n}^{*}| &\leq K_N^2\sum_m\int_{t_m}^{t_{m+1}} \mathbb E|C_N(s)-C_N(t_m)|\,ds\\
            &\leq K_TK_N^2(\Delta_n+\delta_N\sqrt{\Delta_n}) = K_T(\Delta_n^{3/4}+\delta_N\Delta_n^{1/4}) \longrightarrow0.
        \end{align*}
    Thus $D_{N,n}^{*}\to0$ in $L^1$, and
        \begin{equation*}
            \sum_m\mathbb E_m[\xi_{m,N}^2] \xrightarrow{\mathbb P}J_\infty.
        \end{equation*}

\medskip

    \noindent\textit{(iii) Lyapunov condition: vanishing fourth conditional moments.}
    The conditional Burkholder-Davis-Gundy inequality and Cauchy-Schwarz yield
        \begin{align*}
            \mathbb E_m[\xi_{m,N}^4] &\leq K(w_{N,m}^{*})^4 \mathbb E_m\left[\left(\int_{t_m}^{t_{m+1}}C_N(s)\,ds\right)^2\right]\\
            &\leq KK_N^4\Delta_n  \int_{t_m}^{t_{m+1}}\mathbb E_m[C_N(s)^2]\,ds.
        \end{align*}
    Taking the unconditional expectation of the sum and applying \eqref{eq:discrete-observable-moments}, we obtain
        \begin{equation*}
            \mathbb E\left[\sum_m\mathbb E_m[\xi_{m,N}^4]\right] \leq K_TK_N^4\Delta_n = K_T\sqrt{\Delta_n}\longrightarrow0.
        \end{equation*}  
    In particular, for every $a>0$,
        \begin{equation*}
            \sum_m\mathbb E_m[\xi_{m,N}^2\mathbf1_{\{|\xi_{m,N}|>a\}}] \leq a^{-2}\sum_m\mathbb E_m[\xi_{m,N}^4] \xrightarrow{\mathbb P}0.
        \end{equation*}
    This verifies the conditional Lindeberg condition.

    The array is conditionally centered, its conditional variance converges to $J_\infty$, and it satisfies the conditional Lindeberg condition. The deterministic-variance martingale difference array central limit theorem therefore gives
        \begin{equation*}
            \sum_m\xi_{m,N}\xrightarrow{\mathcal L} \mathcal N(0,J_\infty);
        \end{equation*}
    see \cite[Chapter~3]{HalHey80}. By \eqref{eq:discrete-array-localization}, the same limit holds for $L_{N,n}(T)$. Since $J_{N,n}^{\,d}\to J_\infty$ in probability, Slutsky's theorem yields
        \begin{equation*}
            r_NM_{N,n}=\frac{L_{N,n}(T)}{J_{N,n}^{\,d}} \xrightarrow{\mathcal L}\mathcal N(0,J_\infty^{-1}).
        \end{equation*}
    Finally, $r_NR_{N,n}\to0$ in probability, so a second application of Slutsky's theorem completes the proof:
        \begin{equation*}
            r_N(\widehat\beta_{N,n}^{\,d}-\beta) \xrightarrow{\mathcal L}\mathcal N(0,J_\infty^{-1}).
        \end{equation*}
    Here $r_N=\sqrt{N/\varepsilon_N}$ in the strictly positive regime, whereas $r_N=N$ in the degenerate regime. The respective mesh conditions are therefore \eqref{eq:intro-discrete-mesh-condition} and \eqref{eq:degenerate-discrete-mesh}.
\end{proof}

\subsection{Proof of the macroscopic regularity lemma}

\begin{proof}[Proof of Lemma~\ref{l:macroscopic-regularity}]
    Let $E$ be the invariant interval from Condition~\ref{ass:discrete-regularity}. We apply It\^o's formula to the $C^2$ extensions of the observable functions and use the same notation for these extensions. All coefficient evaluations and interpolating segments below lie in $E$ or $E^2$, so only the derivative bounds on that interval are used.

    We use the empirical moment bounds from Lemma~\ref{lem:continuous-observable-convergence}; no exchangeability of the ordered coordinates is required. In the notation $m_{p,N}(t):=\langle\mu_t^N,|x|^p\rangle$, these give
        \begin{equation*}
            \sup_N\mathbb E\sup_{t\leq T}m_{8,N}(t)<\infty.
        \end{equation*}

    \medskip
    
    \noindent\textit{Uniform moments of the observables.} The coefficient bounds imply
        \begin{equation*}
            |A_N(t)|+|B_N(t)|\leq K(1+m_{2,N}(t)), \qquad |C_N(t)|\leq K(1+m_{4,N}(t)).
        \end{equation*}
    Jensen's inequality gives $m_{2,N}^4\leq m_{8,N}$ and $m_{4,N}^2\leq m_{8,N}$. Taking suprema and expectations proves \eqref{eq:discrete-observable-moments}.

    \medskip
    
    \noindent\textit{The one-particle observables $B_N$ and $C_N$.} For $f=\phi_{B,N}$ or $f=\phi_C$, apply It\^o's formula to $N^{-1}\sum_i f(X_t^{i,N})$. Writing $x_i=X_t^{i,N}$ in the drift expression, symmetrization gives
        \begin{align*}
            D_N^f(t) &=\frac1N\sum_i f'(x_i)b(x_i) +\frac{\varepsilon_N}{2N}\sum_i f''(x_i)\sigma^2(x_i) + \frac{\beta}{2N^2}\sum_{i\neq j}H(x_i,x_j)\mathcal D_f(x_i,x_j), \\
            M_N^f(t) &=\frac{\sqrt{\varepsilon_N}}{N}\sum_i \int_0^t f'(X_s^{i,N})\sigma(X_s^{i,N})\,dB_s^i.
        \end{align*}
    Here $\mathcal D_f$ is the divided difference defined in Section~\ref{s:continuous}. The derivative bounds in Condition~\ref{ass:discrete-regularity} imply
        \begin{equation*}
            |\mathcal D_{\phi_{B,N}}(x,y)|\leq K, \qquad |\mathcal D_{\phi_C}(x,y)|\leq K(1+x^2+y^2).
        \end{equation*}
    Combining these bounds with Condition~\ref{ass:bounds} and Young's inequality yields
        \begin{equation*}
            |D_N^B(t)|\leq K(1+m_{2,N}(t)), \qquad |D_N^C(t)|\leq K(1+m_{4,N}(t)).
        \end{equation*}
    Their squared expectations are uniformly bounded. Moreover,
        \begin{equation*}
            \frac{d}{dt}\langle M_N^f\rangle_t = \frac{\varepsilon_N}{N^2}\sum_i \bigl(f'(x_i)\sigma(x_i)\bigr)^2.
        \end{equation*}
    The integrand has growth at most degree four for $f=\phi_{B,N}$ and degree eight for $f=\phi_C$. Therefore,
        \begin{equation*}
            \frac{d}{dt}\langle M_N^B\rangle_t \leq K\delta_N^2(1+m_{4,N}(t)), \qquad \frac{d}{dt}\langle M_N^C\rangle_t \leq K\delta_N^2(1+m_{8,N}(t)).
        \end{equation*}

    \medskip
    
    \noindent\textit{The two-particle observable $A_N$.} Applying multidimensional It\^o's formula to $A_N=(2N^2)^{-1}\sum_{i\neq j}H(X^{i,N},X^{j,N})$ gives $dA_N=D_N^A\,dt+dM_N^A$. Its regular drift is
        \begin{equation*}
            D_{N,\mathrm{reg}}^A(t) = \frac1{N^2}\sum_{i\neq j}\partial_1H(x_i,x_j)b(x_i) + \frac{\varepsilon_N}{2N^2}\sum_{i\neq j} \partial_{11}H(x_i,x_j)\sigma^2(x_i).
        \end{equation*}
    The symmetry of $H$ combines the two singular drift sums into
        \begin{equation*}
            D_{N,\mathrm{sing}}^A(t) = \frac{\beta}{N^3}\sum_{i\neq j}\sum_{k\neq i} \partial_1H(x_i,x_j)\frac{H(x_i,x_k)}{x_i-x_k}.
        \end{equation*}
    Split the sum into $S_1$, corresponding to $j=k$, and $S_2$, corresponding to pairwise distinct indices. Swapping $i,k$ and averaging gives exactly
        \begin{align}
            S_1 &=\frac{\beta}{2N^3}\sum_{i\neq k}H(x_i,x_k) \frac{\partial_1H(x_i,x_k)-\partial_2H(x_i,x_k)}{x_i-x_k},\\
            S_2 &=\frac{\beta}{2N^3} \sum_{\substack{i,j,k\\\text{pairwise distinct}}}H(x_i,x_k) \frac{\partial_1H(x_i,x_j)-\partial_1H(x_k,x_j)}{x_i-x_k}.
        \end{align}
    Both singularities are removable. For the first quotient, symmetry gives $\partial_1H(y,y)=\partial_2H(y,y)$ and hence
        \begin{equation*}
            \frac{\partial_1H(x,y)-\partial_2H(x,y)}{x-y} = \int_0^1(\partial_{11}H-\partial_{12}H) (y+\theta(x-y),y)\,d\theta.
        \end{equation*}
    For the second quotient,
        \begin{equation*}
            \frac{\partial_1H(x,y)-\partial_1H(z,y)}{x-z} = \int_0^1\partial_{11}H(z+\theta(x-z),y)\,d\theta.
        \end{equation*}
    These formulas also define the continuous extensions to the diagonals. By Condition~\ref{ass:discrete-regularity}, their absolute values are bounded by $K(1+x^2+y^2)$ and $K(1+x^2+y^2+z^2)$, respectively. After multiplication by $H$ in $S_1,S_2$, every bounding monomial has total degree at most four. Young's inequality and the factors $N^{-2}$ and $N^{-3}$ in the empirical sums consequently give
        \begin{equation*}
            |D_N^A(t)|\leq K(1+m_{4,N}(t)).
        \end{equation*}
    Since $m_{4,N}^2\leq m_{8,N}$, this proves the required squared-drift bound for $A_N$.

    Grouping the stochastic integrals by their driving Brownian motions gives
        \begin{equation*}
            M_N^A(t)=\frac{\sqrt{\varepsilon_N}}{N^2}\sum_i \int_0^t\sigma(X_s^{i,N}) \left(\sum_{j\neq i}\partial_1H(X_s^{i,N},X_s^{j,N})\right)dB_s^i.
        \end{equation*}
    Therefore,
        \begin{align*}
            \frac{d}{dt}\langle M_N^A\rangle_t &=\frac{\varepsilon_N}{N^4}\sum_i\sigma^2(x_i)\left(\sum_{j\neq i}\partial_1H(x_i,x_j)\right)^2\\
                &\leq\frac{\varepsilon_N}{N^3}\sum_{i\neq j} \sigma^2(x_i)|\partial_1H(x_i,x_j)|^2 \leq K\delta_N^2(1+m_{6,N}(t)).
        \end{align*}
    This proves \eqref{eq:discrete-drift-bound} and \eqref{eq:discrete-macroscopic-bracket-bound} for all three observables. In particular their local martingale parts are square-integrable. Finally, for each $F\in\{A,B,C\}$, Cauchy-Schwarz and It\^o's isometry give
        \begin{align*}
            \mathbb E|F_N(t)-F_N(s)|^2 &\leq2(t-s)\int_s^t\mathbb E|D_N^F(u)|^2\,du +2\mathbb E\bigl[\langle M_N^F\rangle_t-\langle M_N^F\rangle_s \bigr]\\
            &\leq K_T\bigl((t-s)^2+\delta_N^2(t-s)\bigr).
        \end{align*}
    Summation over $F$ proves \eqref{eq:discrete-macroscopic-increments}. All It\^o identities are first justified by localization. In the non-sticky regime the symmetrizations hold for Lebesgue-a.e. time, which suffices for the integrated identities; their continuous divided-difference extensions justify all bounds at collision configurations as well.
\end{proof}

\bibliographystyle{plain}
\bibliography{references}

\end{document}